\newcommand{\C}{\mathbb C}
\newcommand{\R}{\mathbb R}
\newcommand{\Z}{\mathbb Z}
\renewcommand{\P}{\mathbb P}
\newcommand{\cH}{\mathcal H}
\newcommand{\cI}{\mathcal I}
\newcommand{\cJ}{\mathcal J}
\newcommand{\cM}{\mathcal M}
\newcommand{\cV}{\mathcal V}
\newcommand{\X}{\mathfrak X}
\newcommand{\bF}{\mathbf F}
\newcommand{\na}{\nabla}
\newcommand{\pa}{\partial}
\renewcommand{\a}{\alpha} 
\renewcommand{\b}{\beta} 
\newcommand{\G}{\varGamma}
\newcommand{\De}{\mathit{\Delta}}
\newcommand{\de}{\delta}
\newcommand{\f}{\varphi}
\newcommand{\w}{\omega}
\newcommand{\W}{\mathit{\Omega}}
\newcommand{\la}{\langle}
\newcommand{\ra}{\rangle}
\newcommand{\dd}{\mathrm{d}}
\newcommand{\dt}{\mathrm{d}_t}
\newcommand{\dx}{\mathrm{d}_x}
\newcommand{\tx}{\widetilde{x}}
\newcommand{\tr}{\;^t}
\newcommand{\bu}{\bullet}
\newcommand{\diag}{\mathrm{diag}}
\newcommand{\under}[3]{
  {}_{#1} #2 _{#3}
  % \sideset{_{#1}}{_{#3}}{\mathop{#2}}
}
\newcommand{\DS}{\displaystyle }
\newcommand{\si}{\sigma}
\newcommand{\we}{\wedge}
\newcommand{\tpi}{2\pi \sqrt{-1}}
\newcommand{\bS}{\mathbf{S}}
\newcommand{\naa}{\nabla^{\alpha}}
\newcommand{\Fax}{F(\alpha ;x)}
\newcommand{\bFax}{\bF (\alpha ;x)}
\newcommand{\Psiax}{\Psi (\alpha ;x)}
\newcommand{\Psiij}{\Psi_{ij} (\alpha ;x)}
\newcommand{\MJa}{M_J (\alpha)}
\newcommand{\MJij}{M_J^{ij} (\alpha)}
\newcommand{\cMJ}{\cM_J(\alpha)}
\newcommand{\conti}[1]{U_{#1}}
\newcommand{\Conti}[1]{\mathcal{U}_{#1}}
\newcommand{\cohom}{V}
\newcommand{\cohomi}{{V^{(i)}}}
\newcommand{\Omd}{\W^{\bu}}
\newtheorem{theorem}{Theorem}[section]
\newtheorem{proposition}[theorem]{Proposition}
\newtheorem{lemma}[theorem]{Lemma}
\newtheorem{cor}[theorem]{Corollary}
\newtheorem{fact}[theorem]{Fact}
\theoremstyle{definition}
\newtheorem{remark}[theorem]{Remark}
\newtheorem{exam}[theorem]{Example}
\newtheorem{algorithm}[theorem]{Algorithm}
\newtheorem{notation}[theorem]{Notation}
\newtheorem*{acknowledgements}{Acknowledgements}
\title
[Pfaffian and contiguity relations]
{Pfaffian equations and contiguity relations
  of the hypergeometric function of type
  $(k+1, k+n+2)$ and their applications} 
\author[Y. Goto]{Yoshiaki GOTO}
\address[Goto]{
Department of Mathematics, 
Graduate School of Science,
Kobe University, 
Kobe 657-8501, Japan}
\email{y-goto@math.kobe-u.ac.jp}
\author[K. Matsumoto]{Keiji Matsumoto}
\address[Matsumoto]{
Department of Mathematics\\
Hokkaido University\\
Sapporo 060-0810, Japan
}
\email{matsu@math.sci.hokudai.ac.jp}
\keywords{Hypergeometric function, Twisted cohomology group, 
Pfaffian equation, Contiguity relation}
\subjclass[2010]{33C70; 33C90}
\date{\today}
\begin{document}
\maketitle
\begin{abstract}
We study the structures of Pfaffian equations
and contiguity relations of the hypergeometric function
of type $(k+1,k+n+2)$ 
by using twisted cohomology groups and the intersection form on them. 
We apply our results to algebraic statistics; 
numerical evaluation of the normalizing constants of two way contingency tables
with fixed marginal sums.
\end{abstract}

%\tableofcontents
\section{Introduction}
We consider the hypergeometric integral of type $(k+1,k+n+2)$ 
defined as 
\begin{align*}
  F(\a;x)=\int_{\square}& \prod_{i=1}^k t_i^{\a_i} 
  \cdot \prod_{j=1}^n \Bigl( 1+\sum_{i=1}^kx_{ij}t_i \Bigr)^{\a_{k+j}} \\
  &\cdot \Bigl( 1+\sum_{i=1}^k t_i \Bigr)^{\a_{k+n+1}} 
  \cdot \frac{\dd t_1 \we \cdots \we \dd t_k}{t_1 \cdots t_k} ,
\end{align*}
where $\a_i$'s are parameters, $x_{ij}$'s are $k\times n$ variables, and 
$\square$ is a certain region. 
% It is known that $F(\a ;x)$ satisfies the hypergeometric system $E(k+1,k+n+2;\a)$ 
% generated by (\ref{eq:ekn}), 
% which is holonomic and  of rank $\binom{k+n}{k}$.
% with a regular singular locus.
In this paper, we study Pfaffian equations 
and contiguity relations of the hypergeometric function $F(\a ;x)$
by using the twisted cohomology groups 
and the intersection forms.  
In \cite{M2} and \cite{G-FD}, a Pfaffian equation and contiguity relations 
of Lauricella's $F_D$ (the case of $k=1$) are studied 
in the same framework. 
This paper generalizes these results. 

We regard Pfaffian equations and contiguity relations
as matrix representations of some linear maps
on twisted cohomology groups. 
To obtain the matrices providing the relations, 
we use the intersection form of the twisted cohomology group
(Proposition \ref{prop:MJ-expression} and Theorem \ref{th:contiguity}). 
Our expressions have simple forms; 
each of the matrices for Pfaffian equations is determined by only an 
eigenvector with non-zero eigenvalue, 
and that for contiguity relations is decomposed into 
a product of intersection matrices and a diagonal one. 
An advantage of our method is that 
it systematically yields the relations 
from small initial data for a given basis of 
the twisted cohomology group without complicated calculations. 
Further, we give expressions of these linear maps by using 
the intersection forms (Theorems \ref{th:main} and 
\ref{th:contiguity-intersection}). 
They are independent of choice of bases of 
twisted cohomology groups. 

Finally, we discuss an application of our results
to algebraic statistics. 
We can express the normalizing constant of the hypergeometric distribution
of the $r_1 \times r_2$ two way contingency tables 
with fixed marginal sums by 
the hypergeometric function of type $(r_1,r_1+r_2)$ with integral parameters. 
By using contiguity relations, 
we give an algorithm (Algorithm \ref{algo:NC}) for
evaluating values of the normalizing constant 
in the framework of holonomic gradient method \cite{N3OST2}. 
Further, a Pfaffian equation gives the gradient matrix of 
the expectations, which is important  
to solve the conditional maximal likelihood estimate problem \cite[\S 4]{TKT}. 
% We apply our results to algebraic statistics; evaluation of 
% the normalizing constants of two way contingency tables
% with fixed marginal sums. 
We refer \cite{Ogawa-D} and \cite{TKT} for statistical applications
of hypergeometric functions. 
% See \cite{Ogawa-D} for statistical applications. 
% As to relations between hypergeometric functions of several variables 
% and normalizing constants of conditional distribution of two way contingency tables, 
% \cite{TKT} will be a good reference. 

% In \cite{M2} and \cite{G-FD}, a Pfaffian equation and contiguity relations 
% of Lauricella's $F_D$ (the case of $k=1$) are studied 
% in terms of the twisted cohomology groups 
% and the intersection forms. 
% This paper generalizes these results. 

Pfaffian equations and contiguity relations have been studied 
from several points of view. 
In \cite{KM-duality},  
Kita and the second author give 
an expression of the Gauss-Manin connection for some basis 
of the twisted cohomology group. 
In fact, it does not directly imply a Pfaffian equation; 
%to express the Gauss-Manin connection; 
see \S \ref{sec:pfaffian}.
%see Remark \ref{rem:c-basis}.
% a Pfaffian equation in terms of 
% the Gauss-Manin connection in \cite{KM-duality}. 
Aomoto studies the contiguity relations of 
the hypergeometric functions of type $(k,n)$ by using twisted cohomology groups
in \cite{Aomoto}. 
This result is based on calculations in only the target space of 
$\Conti{i}$ in Proposition \ref{prop:linear-map}. 
On the other hand, 
by considering both of its domain and target spaces, 
we can clarify a structure of contiguity relations. 
% This result needs to take a basis of the twisted cohomology group and 
% is based on complicated calculations. 
Sasaki studies them  
in the framework of the Aomoto-Gel'fand system on the Grassmannian manifold in \cite{Sasaki}. 
However, it only gives contiguity relations on coordinates of 
the Grassmannian manifold in general case. 
Though Takayama gives an algorithmic method that uses Gr\"obner bases 
to derive the contiguity relations in \cite{Takayama}, 
this method requires huge computer resources. 
Recently in \cite{OT}, Ohara and Takayama give a numerical method 
to derive Pfaffian equations and contiguity relations of 
$A$-hypergeometric systems, one of which $\Fax$ satisfies, 
to evaluate the normalizing constant of $A$-hypergeometric distributions, 
but it is still difficult to get Pfaffian equations and contiguity relations 
unless $k$ and $n$ are small enough. 
% studies the normalizing constants of 
% $A$-hypergeometric distribution, which are expressed by $A$-hypergeometric polynomials, 
% by using contiguity relations and Macaulay type matrices. 

For statistical applications, 
the method given in \cite{OT} is applicable to evaluation
not only for two way contingency tables
but also for other cases, 
while our algorithm is much faster 
and can solve larger problems
than theirs for the two way contingency tables. 

% This paper arranged as follows. 

\section{Preliminaries}
\label{sec:preliminaries}
Let $Z=(z_{ij})$ be a square matrix arranged $n^2$ variables $z_{ij}$
$(1\le i,j\le n)$.
\begin{fact} 
\label{fact:dlog}
The logarithmic derivative of the determinant $|Z|$ of $Z$ is
$$\dd \log |Z|=\sum_{1\le i,j\le n}
\frac{(-1)^{i+j} |Z_{ij}|\dd z_{ij}}{|Z|},$$
where $Z_{ij}$ is the square matrix of size $n-1$ 
removing the $i$-th row and the $j$-th column from $Z$, 
and $|Z_{ij}|$ is its determinant.
\end{fact}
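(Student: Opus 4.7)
The plan is to compute each partial derivative $\pa |Z|/\pa z_{ij}$ by a Laplace cofactor expansion, assemble the results into the total differential $\dd |Z|$, and then divide by $|Z|$.

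Concretely, I would fix a row index $i$ and expand $|Z|$ along the $i$-th row:
$$|Z|=\sum_{j=1}^n (-1)^{i+j}\, z_{ij}\, |Z_{ij}|.$$
The crucial observation is that the minor $|Z_{ij}|$ is a polynomial in the entries $z_{k\ell}$ with $k\ne i$ and $\ell\ne j$, so it is independent of $z_{ij}$ itself. Differentiating the cofactor expansion with respect to $z_{ij}$ therefore yields
$$\frac{\pa |Z|}{\pa z_{ij}}=(-1)^{i+j}|Z_{ij}|,$$
and summing these contributions over all pairs $(i,j)$ gives
$$\dd |Z|=\sum_{1\le i,j\le n}(-1)^{i+j}|Z_{ij}|\,\dd z_{ij}.$$
Dividing by $|Z|$ and using $\dd \log |Z|=\dd |Z|/|Z|$ (valid on the open set where $|Z|\ne 0$, which is where the statement makes sense) produces the claimed formula.

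The argument is entirely elementary and there is no genuine obstacle: the only point requiring any care is the independence of $|Z_{ij}|$ from $z_{ij}$, which is immediate from the definition of the $(i,j)$-minor as the determinant of the submatrix obtained by deleting the $i$-th row and $j$-th column. As an alternative route, one could derive the same identity in coordinate-free form from Jacobi's formula $\dd \det Z=\Tr(\mathrm{adj}(Z)\,\dd Z)$ together with the fact that the $(j,i)$-entry of $\mathrm{adj}(Z)$ is $(-1)^{i+j}|Z_{ij}|$, but the direct cofactor-expansion approach above is the most transparent.
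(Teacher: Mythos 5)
Your proposal is correct and follows essentially the same route as the paper: cofactor expansion of $|Z|$ along the $i$-th row, the observation that the minor $|Z_{ij}|$ does not involve $z_{ij}$, hence $\pa |Z|/\pa z_{ij}=(-1)^{i+j}|Z_{ij}|$, and assembly into $\dd |Z|$ before dividing by $|Z|$. The only cosmetic difference is that the paper phrases the final step as "this property together with the symmetry," while you carry out the summation over all $(i,j)$ explicitly.
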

\begin{proof}
By the cofactor expansion 
$$|Z|=\sum_{p=1}^n (-1)^{p+i}z_{ip}|Z_{ip}|,$$
with respect to the $i$-th row of $Z$, 
we have 
$$\dd |Z|=\sum_{p=1}^n \Bigl( (-1)^{p+i} |Z_{ip}|\dd z_{ip}
+(-1)^{p+i}z_{ip}\dd |Z_{ip}| \Bigr) .
$$
Since the minor $|Z_{ip}|$ does not have the variable $z_{ij}$,  
the coefficient function of $\dd z_{ij}$ in $\dd |Z|$ is $(-1)^{i+j}|Z_{ij}|$.
This property together with the symmetry yields this fact.
\end{proof}

Let  $x_{ij}$ $(1\le i\le k,\ 1\le j\le n)$ be 
$k\times n$ variables and $x=(x_{ij})$ be the matrix arranging them.
We set a $(k+1)\times (k+n+2)$ matrix 
\begin{eqnarray*}
\tx &=&(\tx_{ij})_{\substack{0\le i\le k\hspace{8mm}\\ 0\le j\le k+n+1}}
=\begin{pmatrix}
1 & \mathbf{0}_k & \mathbf{1}_n & 1 \\
\tr\mathbf{0}_k& I_k & x &\tr \mathbf{1}_k
\end{pmatrix} \\
&=&
\bordermatrix{
 &0&1& \cdots &k&k+1& \cdots &k+n& k+n+1\cr
0&1&0& \cdots      &0&1  & \cdots &1 &1 \cr
1&0&1&   &0&x_{11}&\cdots&x_{1n} &1 \cr
\vdots&\vdots & &\ddots &\vdots &\vdots &\ddots&\vdots &\vdots \cr
k&0&0&\cdots  &1&x_{k1}&\cdots&x_{kn} &1 \cr
} ,
\end{eqnarray*}
where 
$\mathbf{0}_k=(0,\dots,0)\in \Z^k$, $\mathbf{1}_n=(1,\dots,1)\in \Z^n$, and
$I_k$ is the unit matrix of size $k$. 

Let $\cJ$ be the set of subsets of 
$\{0,1,2,\dots,k+n,k+n+1\}$ with cardinality $k+1$. 
Any element in $\cJ$ is expressed as 
$$J=\{j_0,j_1,\dots,j_k\}, \quad 
0\le j_0<j_1<\cdots <j_k\le k+n+1.$$
We set 
$$\tx \la J\ra=
\begin{pmatrix}
\tx_{0,j_0} &\tx_{0,j_1} &\cdots &\tx_{0,j_k} \\
\tx_{1,j_0} &\tx_{1,j_1} &\cdots &\tx_{1,j_k} \\
\vdots & \vdots &\ddots & \vdots \\
\tx_{k,j_0} &\tx_{k,j_1} &\cdots &\tx_{k,j_k}
\end{pmatrix},
$$
which is the sub-matrix of $\tx$ 
consisting of the $j_0$-th, $j_1$-th, $\dots$, $j_k$-th columns.
We define a subset of $\cJ$ by 
$$\cJ^\circ=\{J\in \cJ\mid \dx |\tx \la J\ra | \ne 0\},$$
where $\dx$ is the exterior derivative with respect to 
$x_{11}$, $x_{12}$, $\ldots$, $x_{kn}$.

\begin{lemma}
\label{lem:iJj+k}
Any element $J$ of $\cJ^\circ$ does not include an index $i$ 
with $1\le i\le k$ and includes an index $k+j$ with 
$1\le j\le n$.
% For any $J\in \cJ^\circ$, there exist $i$ and $j$ such that 
% $$1\le i\le k,\ i\notin J,\quad 1\le j\le n,\ k+j\in J.$$
\end{lemma}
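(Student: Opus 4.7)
The plan is to prove both halves of the lemma by contraposition, exploiting the explicit block structure of $\tx$ displayed above. Four structural facts carry the argument: column $0$ has a single $1$ in row $0$ with zeros below; column $k+n+1$ has every entry equal to $1$; for $1\le i\le k$, column $i$ is the standard basis vector $e_i\in\R^{k+1}$ with its unique $1$ in row $i$; and the only columns carrying any of the variables $x_{ij}$ are columns $k+1,\dots,k+n$, where the $x$-entries sit in rows $1,\dots,k$.

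For the existence of some $k+j\in J$, the contrapositive is immediate: if $J\cap\{k+1,\dots,k+n\}=\emptyset$, then $J\subseteq\{0,1,\dots,k,k+n+1\}$, so every entry of $\tx\la J\ra$ lies in $\{0,1\}$. Hence $|\tx\la J\ra|\in\Z$ is a constant, $\dx|\tx\la J\ra|=0$, and $J\notin\cJ^\circ$.

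For the claim that $J$ contains no index $i\in\{1,\dots,k\}$, my strategy is iterated cofactor expansion. If $i\in J\cap\{1,\dots,k\}$, the corresponding column of $\tx\la J\ra$ is $e_i$, so cofactor expansion along it yields $|\tx\la J\ra|=\pm|N|$, where $N$ is the $k\times k$ minor obtained by deleting row $i$ together with that column. Iterating this for every index in $J\cap\{1,\dots,k\}$ leaves a minor whose rows are indexed by $\{0,\dots,k\}\setminus (J\cap\{1,\dots,k\})$ and whose columns are indexed by the remaining elements of $J$; I then aim to show that this residual minor is constant in $x$.

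The main obstacle lies in this last verification. Since an $x_{ij}$-entry of $\tx$ requires simultaneously a row in $\{1,\dots,k\}$ and a column in $\{k+1,\dots,k+n\}$, one must argue that the cofactor reduction eliminates either the right rows or the right columns to preclude any such surviving entry. The cleanest route I see is to first permute columns so that the $e_i$-columns form a contiguous identity block, converting $\tx\la J\ra$ into a block form whose determinant collapses manifestly to a constant; the delicate combinatorial bookkeeping, together with the sign tracking for the column permutation, is the technical heart of the argument.
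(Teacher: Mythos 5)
Your handling of the second assertion (some $k+j$ must lie in $J$) is correct and is exactly the paper's argument. The difficulty is with the first assertion, which you have misread. The lemma's (admittedly awkward) phrase ``does not include an index $i$ with $1\le i\le k$'' is meant existentially: $J$ fails to contain at least one such index, i.e.\ $\{1,\dots,k\}\not\subseteq J$ --- not $J\cap\{1,\dots,k\}=\emptyset$. The disjointness reading is simply false: take $k=2$, $n=1$ and $J=\{0,1,3\}$; then
\[
|\tx\la J\ra|=\det\begin{pmatrix}1&0&1\\0&1&x_{11}\\0&0&x_{21}\end{pmatrix}=x_{21},
\]
so $\dx|\tx\la J\ra|\ne 0$ and $J\in\cJ^\circ$ although $1\in J$. (Disjointness would also force $\#(\cJ^\circ)\le\binom{n+2}{k+1}$, contradicting Lemma \ref{lem:num-components}, and it is only the weaker statement that is used later, e.g.\ in Lemma \ref{lem:eigenvals}, where one chooses some $i\notin J$ and some $k+j\in J$.)

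Consequently the contrapositive you set out to prove --- ``$J\cap\{1,\dots,k\}\ne\emptyset$ implies $|\tx\la J\ra|$ is constant'' --- is false, and the step you yourself flag as the technical heart (that the residual minor left after expanding along the $e_i$-columns is constant in $x$) cannot be carried out: that minor still pairs the rows of $\{1,\dots,k\}$ \emph{not} matched by columns of $J$ against the $x$-columns of $J$, exactly as in the example above. The intended argument is essentially trivial and is the one in the paper: if $J$ contains \emph{all} of $1,\dots,k$, cofactor expansion along those $k$ standard-basis columns reduces $|\tx\la J\ra|$ to the row-$0$ entry of the single remaining column, which equals $1$, so $|\tx\la J\ra|=\pm1$ is constant and $J\notin\cJ^\circ$; together with your (correct) second half this proves the lemma in its intended reading.
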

\begin{proof}
If $J\in \cJ$ includes $1,\dots,k$ then $|\tx \la J\ra|=\pm1$. 
If $J$ includes none of $k+1,\dots,k+n$, then 
there is no variable $x_{ij}$ in $\tx \la J\ra$. 
\end{proof}

We count the cardinality of the set $\cJ^\circ$.
It is easy to that $\#(\cJ)=\binom{k+n+2}{k+1}$. 
If we choose $J=\{j_0,j_1,\dots,j_k\}\in \cJ$ so that 
$$j_0=1,\ j_1=2,\dots,\ j_{k-1}=k,$$
then $\tx\la J\ra$ becomes a constant for any $j_k$. There are $n+2$ ways 
to choose $j_k$. 
If we choose 
$$j_0=0,\ j_k=n+k+1,$$
then $\tx\la J\ra$ becomes a constant for 
$\{j_1,\dots,j_{k-1}\}\subset \{1,\dots,k\}$.  
There are $k$ ways to choose $j_1,\dots,j_{k-1}$.
Thus we have the following lemma.
\begin{lemma}
\label{lem:num-components}
The cardinality of $\cJ^\circ$ is 
$$
\binom{k+n+2}{k+1}-(k+n+2).
% \frac{(k+n+2)!}{(k+1)!(n+1)!}-(k+n+2).
%=\frac{(k+n+2)\{(k+n+1)!-(k+1)!(n+1)!\}}{(k+1)!(n+1)!}
$$
%non-constant polynomials $\tx \la J\ra$.
\end{lemma}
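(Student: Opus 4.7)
The plan is to compute $|\cJ^\circ|$ via the complement:
\[
|\cJ^\circ| = \binom{k+n+2}{k+1} - |\cJ \setminus \cJ^\circ|,
\]
the first term being standard since $\cJ$ is the set of $(k+1)$-subsets of $\{0, 1, \ldots, k+n+1\}$. The crux is to show $|\cJ \setminus \cJ^\circ| = k+n+2$, for which I would introduce the two families
\begin{align*}
A &:= \{J \in \cJ : \{1, \ldots, k\} \subseteq J\}, \\
B &:= \{J \in \cJ : J \cap \{k+1, \ldots, k+n\} = \emptyset\},
\end{align*}
which are exactly the ``bad'' families identified in the proof of Lemma \ref{lem:iJj+k}. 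That proof gives the inclusion $A \cup B \subseteq \cJ \setminus \cJ^\circ$: Laplace expansion along the standard-basis columns shows $|\tx\la J\ra| = \pm 1$ for $J \in A$, and no $x$-variable appears at all for $J \in B$.

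Counting the pieces by inclusion-exclusion is then direct. Each $J \in A$ is determined by its unique extra element in $\{0, k+1, \ldots, k+n+1\}$, so $|A| = n+2$. Each $J \in B$ is a $(k+1)$-subset of the $(k+2)$-element set $\{0, 1, \ldots, k, k+n+1\}$, so $|B| = k+2$. The intersection $A \cap B$ consists of those $J \supseteq \{1, \ldots, k\}$ whose extra element lies in $\{0, k+n+1\}$, giving $|A \cap B| = 2$. Hence $|A \cup B| = (n+2) + (k+2) - 2 = k+n+2$.

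The more delicate step is the reverse inclusion $\cJ \setminus \cJ^\circ \subseteq A \cup B$, that is, a converse to Lemma \ref{lem:iJj+k}. Given $J \notin A \cup B$, pick $i^\ast \in \{1, \ldots, k\} \setminus J$ and $k+j^\ast \in J \cap \{k+1, \ldots, k+n\}$. Since $x_{i^\ast, j^\ast}$ appears in exactly one entry of $\tx\la J\ra$, the partial derivative $\pa_{x_{i^\ast, j^\ast}} |\tx\la J\ra|$ equals, up to sign, the $k \times k$ minor obtained by deleting row $i^\ast$ and the column indexed by $k+j^\ast$. I would argue by a case analysis on whether $0 \in J$ and whether $k+n+1 \in J$: after Laplace expansion along column $0$ (when present) and column operations against the all-ones column $k+n+1$ (when present), the minor reduces to a determinant built from distinct $x$-columns, which is non-zero as a polynomial in the remaining $x$-variables. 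The main obstacle lies precisely here: when $|J \cap \{k+1, \ldots, k+n\}|$ is small, the minor involves very few $x$-variables, and one must argue carefully that no cancellation can kill the determinant. Once the converse is established, $|\cJ \setminus \cJ^\circ| = k+n+2$ and the stated formula follows.
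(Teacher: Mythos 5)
Your approach is essentially the paper's: the paper also counts the complement, and its two disjoint ``bad'' families (the $n+2$ sets containing $\{1,\dots,k\}$, and the $k$ sets containing $0$ and $k+n+1$ with the remaining $k-1$ indices in $\{1,\dots,k\}$) are exactly your $A$ and $B\setminus A$, so your inclusion-exclusion count $|A\cup B|=k+n+2$ agrees with the paper's $(n+2)+k$. The only step you leave open, the converse inclusion $\cJ\setminus\cJ^\circ\subseteq A\cup B$, is precisely the point the paper treats as evident and does not spell out, so you have correctly isolated where the content lies; moreover, the cancellation you worry about cannot occur. Indeed, for $J\notin A\cup B$ expand $|\tx\la J\ra|$ along the columns $e_i$ with $i\in J\cap\{1,\dots,k\}$, and along column $0$ if $0\in J$; up to sign this leaves the minor whose rows are indexed by $\{1,\dots,k\}\setminus J\neq\emptyset$ (together with row $0$ if $0\notin J$) and whose columns are indexed by $J\cap\{k+1,\dots,k+n\}\neq\emptyset$ (together with column $k+n+1$ if $k+n+1\in J$). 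In this minor every $x$-entry is a distinct independent indeterminate, and the only non-variable entries form at most one all-ones row and one all-ones column; hence the Leibniz terms of maximal degree in the $x$'s are pairwise distinct monomials with coefficients $\pm 1$ (each such monomial determines the matching that produced it), so they cannot cancel against each other or against lower-degree terms. Thus the determinant is a non-constant polynomial, $\dx|\tx\la J\ra|\neq 0$, and $\cJ\setminus\cJ^\circ=A\cup B$, completing your argument; with this observation your proof is correct and coincides in substance with the paper's.
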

% Note that if $k=1$, then this number is 
% $(n+3)n / 2$.

Let $L_j$ $(0\le j\le k+n+1)$ be linear forms of $t_0,t_1,\dots,t_k$ defined by
$$(t_0,t_1,\dots,t_k)\tx=(L_0,L_1,\dots,L_k,L_{k+1},\dots,L_{k+n},L_{k+n+1}).
$$
% Note that 
% $$L_j=t_j\quad (0\le j\le k),\quad L_{k+j}=t_0+t_1x_{1j}+\cdots+t_kx_{kj} 
% \quad 
% (1\le j\le n).
% $$
Namely, 
\begin{align*}
  &L_j=t_j \ (0\le j\le k),\quad 
  L_{k+j}=t_0+t_1x_{1j}+\cdots+t_kx_{kj} \ (1\le j\le n), \\
  &L_{k+n+1}=t_0+t_1+\cdots+t_k .
\end{align*}
\begin{lemma}
\label{lem:ti-exp}
Let $J=\{j_0,\dots,j_k\}$ be an element of $\cJ$.
The linear form $t_i=L_i$ $(0\le i\le k)$ can be expressed in terms of 
$L_{j_0},\dots, L_{j_k}$ as 
$$
t_i=\frac{1}{|\tx\la J\ra|}\sum_{p=0}^k | \tx\la \under{j_p}{J}{i}\ra | L_{j_p},
$$
where 
$_{j_p}J_{i}=(J-\{j_p\})\cup \{i\}=\{j_0,\dots,j_{p-1},i,j_{p+1},\dots,j_k\}$. 
\end{lemma}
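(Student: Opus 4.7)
The plan is to read the relation $(t_0,\dots,t_k)\tx=(L_0,\dots,L_{k+n+1})$, restricted to the columns indexed by $J$, as a $(k{+}1)\times(k{+}1)$ linear system
$$(t_0,\dots,t_k)\,\tx\la J\ra=(L_{j_0},\dots,L_{j_k})$$
in the unknowns $t_0,\dots,t_k$. The displayed formula implicitly assumes $|\tx\la J\ra|\ne 0$, so Cramer's rule applies and gives
$$t_i=\frac{\det M^{(i)}}{|\tx\la J\ra|},$$
where $M^{(i)}$ is obtained from $\tx\la J\ra$ by replacing its $i$-th row with $(L_{j_0},\dots,L_{j_k})$. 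Expanding $\det M^{(i)}$ along the $i$-th row then yields
$$\det M^{(i)}=\sum_{p=0}^k (-1)^{i+p}\,L_{j_p}\,\det M_{i,p},$$
where $M_{i,p}$ is the minor of $\tx\la J\ra$ with row $i$ and column $p$ removed (this is also the minor of $M^{(i)}$, since $M^{(i)}$ and $\tx\la J\ra$ differ only in row $i$).

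The core step is to identify $(-1)^{i+p}\det M_{i,p}$ with $|\tx\la \under{j_p}{J}{i}\ra|$. This uses the explicit shape of $\tx$: for $0\le i\le k$, its $i$-th column equals the standard basis vector $e_i\in\R^{k+1}$. Reading $\under{j_p}{J}{i}$ in the positional order $j_0,\dots,j_{p-1},i,j_{p+1},\dots,j_k$ fixed by the definition, the $p$-th column of $\tx\la \under{j_p}{J}{i}\ra$ is exactly $e_i$. Expanding the determinant along that column leaves only one surviving term, whose minor coincides with $M_{i,p}$; thus $|\tx\la \under{j_p}{J}{i}\ra|=(-1)^{i+p}\det M_{i,p}$.

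Substituting this back into the Cramer expression, the two $(-1)^{i+p}$ signs cancel and the asserted identity follows. The only point requiring care is this sign bookkeeping, which hinges on keeping the indices of $\under{j_p}{J}{i}$ in positional rather than sorted order; since the definition explicitly writes them in positional order, the convention is unambiguous and I do not expect any serious obstacle.
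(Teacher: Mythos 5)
Your proof is correct and follows essentially the same route as the paper: the paper solves $(t_0,\dots,t_k)=(L_{j_0},\dots,L_{j_k})\tx\la J\ra^{-1}$ and reads off the $i$-th column of the cofactor matrix of $\tx\la J\ra$, which is exactly your Cramer's-rule computation. Your identification of the signed minor $(-1)^{i+p}\det M_{i,p}$ with $|\tx\la \under{j_p}{J}{i}\ra|$ via the unit column $e_i$ of $\tx$ just makes explicit the step the paper leaves to the reader.
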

\begin{proof}
Since
$$(t_0,t_1,\dots,t_k)\tx\la J\ra =(L_{j_0},L_{j_1}\dots,L_{j_k}),$$
$t_i$'s are expressed as
$$(t_0,t_1,\dots,t_k) =(L_{j_0},L_{j_1}\dots,L_{j_k})\tx\la J\ra^{-1}.$$
We have only to write down the $i$-th column of the cofactor matrix of 
$\tx\la J\ra$.
\end{proof}

We regard $(t_0,t_1,\dots,t_k)$ as projective coordinates of $\P^k$ and 
$(t_1,\dots,t_k)$ as affine coordinates with setting $t_0=1$.
For  $J=\{ j_0,j_1,\dots,j_k \} \in \cJ$ 
%satisfying $$0\le j_0<j_1<\cdots <j_k\le k+n+1,$$
we set 
$$\f\la J\ra =\dt\log(L_{j_1}/L_{j_0})\wedge \dt\log(L_{j_2}/L_{j_0})
\wedge \cdots \wedge\dt\log(L_{j_k}/L_{j_0}),$$
where $\dt$ is the exterior derivative with respect to $t_1,\dots,t_k$.

\begin{fact} 
% Let $\dd t$ be the $k$-form $\dd t_1\wedge \cdots \wedge\dd t_k$. Then 
We have
$$
\f\la J\ra=\frac{|\tx\la J\ra |\dd t}{\prod_{p=0}^k L_{j_p}}, \quad 
\dd t =\dd t_1\wedge \cdots \wedge\dd t_k .
$$
%where $|\tilde x\la J\ra |$ is the determinant of $\tilde x\la J\ra$. 
\end{fact}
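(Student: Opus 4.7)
The plan is to reduce the wedge of logarithmic forms to a single determinant via a Laplace expansion. First I would apply the identity $\dt\log(L_{j_p}/L_{j_0})=\dt L_{j_p}/L_{j_p}-\dt L_{j_0}/L_{j_0}$ and distribute the wedge over $p=1,\ldots,k$. In the resulting expansion any term that places $\dt L_{j_0}/L_{j_0}$ in two slots vanishes, so only $k+1$ terms survive: the one using $\dt L_{j_p}/L_{j_p}$ in every slot, plus the $k$ terms obtained by substituting $-\dt L_{j_0}/L_{j_0}$ in exactly one slot. Moving the resulting $\dt L_{j_0}$ to the front and tracking the cyclic sign, then multiplying and dividing each summand by the missing factor $L_{j_r}$ to produce a uniform denominator, I obtain
\begin{align*}
\f\la J\ra=\frac{1}{L_{j_0}\cdots L_{j_k}}\sum_{r=0}^{k}(-1)^r L_{j_r}\,
\dt L_{j_0}\wedge\cdots \widehat{\dt L_{j_r}}\cdots\wedge\dt L_{j_k}.
\end{align*}

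Next I would exploit that each $L_{j_p}$ is linear in $t_1,\ldots,t_k$ with $\dt L_{j_p}=\sum_{i=1}^{k}\tx_{i,j_p}\dd t_i$. Hence the inner wedge of $k$ of the $\dt L_{j_p}$'s is automatically a scalar multiple of $\dd t$; specifically it equals $\det(M_r)\,\dd t$, where $M_r$ is the $k\times k$ minor obtained from $\tx\la J\ra$ by removing row $0$ and column $r$. Assembling this observation, the displayed sum becomes exactly the cofactor expansion along the $0$-th row of the $(k+1)\times(k+1)$ matrix
\begin{align*}
B=\begin{pmatrix}L_{j_0}&L_{j_1}&\cdots&L_{j_k}\\ \tx_{1,j_0}&\tx_{1,j_1}&\cdots&\tx_{1,j_k}\\ \vdots& & &\vdots \\ \tx_{k,j_0}&\tx_{k,j_1}&\cdots&\tx_{k,j_k}\end{pmatrix},
\end{align*}
so $\sum_{r=0}^{k}(-1)^r L_{j_r}\det(M_r)=\det B$.

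Finally, since $L_{j_p}=\tx_{0,j_p}+\sum_{i=1}^{k}t_i\tx_{i,j_p}$ (with the convention $t_0=1$), the row operation that subtracts $t_i$ times row $i$ from row $0$ of $B$, for $i=1,\ldots,k$, turns row $0$ into $(\tx_{0,j_0},\ldots,\tx_{0,j_k})$, producing precisely $\tx\la J\ra$. These operations do not alter $\det B$, so $\det B=|\tx\la J\ra|$, and combining everything yields the claimed formula. The only delicate point is the sign bookkeeping in the distributive expansion of the wedge product; once the formula for $\f\la J\ra$ as a signed sum is in hand, both the Laplace expansion and the row reduction are routine multilinear algebra.
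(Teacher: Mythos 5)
Your proposal is correct and follows essentially the same route as the paper: you first derive the signed-sum identity $\f\la J\ra=\sum_{p=0}^k(-1)^pL_{j_p}\bigwedge_{q\ne p}\dt L_{j_q}/\prod_{q}L_{j_q}$ (which the paper states as its key identity) and then identify the numerator with $|\tx\la J\ra|\,\dd t$ via the cofactor expansion along the $0$-th row. Your final packaging (assembling the sum into $\det B$ and row-reducing using $L_{j_p}=\tx_{0,j_p}+\sum_i t_i\tx_{i,j_p}$) is just a slightly more explicit version of the paper's appeal to that expansion, and the sign bookkeeping you carry out is accurate.
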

\begin{proof}
We use the following identity:
\begin{equation}
\label{eq:log-form}
\f\la J\ra =\sum_{p=0}^k (-1)^p L_{j_p} 
\frac{\bigwedge_{0\le q\le p}^{q\ne p}
\dt L_{j_q}}{\prod_{q=0}^k L_{j_q}}.
\end{equation}
Consider the cofactor expansion of the $0$-th row of 
the sub-matrix $\tx \la J \ra$ of $\tx$.  
% consisting of the $j_0$-th, $j_1$-th, $\dots$, $j_k$-th columns. 
\end{proof}

\section{Gauss-Manin connections}
\label{section:cohomology}
Let $\a_0,\a_1,\dots,\a_{k+n},\a_{k+n+1}$ be parameters in $\C-\Z$ 
satisfying 
\begin{equation}
\label{eq:non-int}
%\a_j\notin \Z\quad (0\le j\le k+n+1),\qquad 
\sum_{j=0}^{k+n+1} \a_j=0. 
\end{equation}
We set $\a=(\a_0,\a_1\dots,\a_{k+n},\a_{k+n+1})$. 
We often regard $\a_i$'s as indeterminants. 
For an element $f(\a)$ of the rational function field 
$\C (\a)=\C (\a_0 ,\ldots ,\a_{k+n+1})$, 
we put $f(\a)^{\vee}=f(-\a)$. 
% For example, we have 
% $\cI(\f\la J\ra,\f\la J'\ra)^{\vee}=(-1)^k \cdot \cI(\f\la J\ra,\f\la J'\ra)$. 
For a matrix $A$ with entries in $\C(\a)$, 
we denote by $A^{\vee}$ the matrix 
operated ${}^{\vee}$ on each entry of $A$.

We define sets $X$ and $\X$ as 
\begin{eqnarray*}
X&=&\{ x\in M(k,n;\C )\mid  |\tx\la J\ra | \ne 0 \textrm{ for any } J
%=\{j_0,j_1,\dots,j_k\}
\},\\
\X&=&\{(t,x)\in \C^k\times X\mid 
\prod_{j=0}^{k+n+1} L_{j}\ne 0\}.
\end{eqnarray*}
We set $1$-forms $\w$ and $\w_x$ as 
\begin{eqnarray*}
\w&=&\sum_{j=1}^{k+n+1} \a_j \dt \log L_j\\
&=&\a_1\frac{\dd t_1}{t_1}+\cdots +\a_k\frac{\dd t_k}{t_k}+
\a_{k+1}\frac{x_{11}\dd t_1+\cdots+x_{k1}\dd t_k}
{1+t_1x_{11}+\cdots +t_kx_{k1}}+\cdots \\
& &+
\a_{k+n}\frac{x_{1n}\dd t_1+\cdots+x_{kn}\dd t_k}
{1+t_1x_{1n}+\cdots +t_kx_{kn}}+
\a_{k+n+1}\frac{\dd t_1+\cdots+\dd t_k}
{1+t_1+\cdots +t_k}
,
\end{eqnarray*}
\begin{eqnarray*}
\w_x&=&\sum_{j=1}^{n} \a_{k+j} \dx \log L_{k+j}
= \sum_{\substack{1\le i\le k\\ 1\le j\le n}}
\frac{\a_{k+j}t_i\dd x_{ij}}{L_{k+j}}\\
&=& \frac{\a_{k+1} t_1 \dd x_{11}}
{1+t_1x_{11}+\cdots +t_kx_{k1}}+\cdots +
\frac{\a_{k+1} t_k \dd x_{k1}}
{1+t_1x_{11}+\cdots +t_kx_{k1}}\\
& &+\cdots + 
\frac{\a_{k+j} t_i \dd x_{ij}}
{1+t_1x_{1j}+\cdots +t_kx_{kj}}
+\cdots  
\\
& &+\frac{\a_{k+n} t_1 \dd x_{1n}}
{1+t_1x_{1n}+\cdots +t_kx_{kn}}+\cdots +
\frac{\a_{k+n} t_k \dd x_{kn}}
{1+t_1x_{1n}+\cdots +t_kx_{kn}}.
\end{eqnarray*}
We define operators as 
$$\na^\a=\dt+\w\wedge, \quad \na_x^\a=\dx+\w_x\wedge,\quad 
\na_{ij}^\a=\frac{\pa}{\pa x_{ij}}+\frac{\a_{k+j}t_i}{L_{k+j}}.$$
Note that 
\begin{equation}
\label{eq:decomp-conn}
\na_x^\a
=\sum_{\substack{1\le i\le k\\ 1\le j\le n}} \dd x_{ij}\wedge \na_{ij}^\a.
\end{equation}

For a fixed $x \in X$, we have twisted cohomology groups 
\begin{eqnarray*}
H^k(\W^\bu(T_x),\na^\a)&=&\W^k(T_x)/\na^\a(\W^{k-1}(T_x)),\\
H^k(\W^\bu(T_x),\na^{-\a})&=&\W^k(T_x)/\na^{-\a}(\W^{k-1}(T_x)) ,
\end{eqnarray*}
where $T_x$ is the preimage of $x$ under the projection 
$\X \ni (t,x) \mapsto x \in X$, and 
$\W^l (T_x)$ is the vector space of rational $l$-forms on $\P^k$ 
with poles only along $\P^k -T_x$. 
Here, we identify $T_x$ with an open subset of $\P^k$.

\begin{fact}[\cite{AK}]
The twisted cohomology groups  $H^k(\W^\bu(T_x),\na^{\pm\a})$ are
of rank $\binom{k+n}{k}$.
\end{fact}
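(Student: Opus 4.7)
The plan is to combine the Aomoto--Kita vanishing theorem with an Euler characteristic calculation for a generic arrangement. Setting $t_0=1$, I would identify $T_x$ with the complement in $\C^k$ of the $k+n+1$ affine hyperplanes $\{L_j=0\}$ for $j=1,\dots,k+n+1$; the defining condition of $X$, namely $|\tx\la J\ra|\ne 0$ for every $J\in\cJ$, guarantees that these, together with the hyperplane at infinity $\{L_0=0\}$, form a generic arrangement of $k+n+2$ hyperplanes in $\P^k$.

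Next I would invoke the vanishing theorem for twisted cohomology on generic arrangement complements from \cite{AK}: under the hypothesis $\a_j\notin\Z$ for every $j$ (including $\a_0$, which is then forced to be non-integer by (\ref{eq:non-int}) and the assumption on the other $\a_j$'s in the non-resonant range), the local system $\na^{\pm\a}$ is non-resonant enough that $H^l(\W^\bu(T_x),\na^{\pm\a})=0$ for all $l\ne k$. Hence the rank of $H^k$ equals $|\chi(T_x)|$.

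Finally, a standard Orlik--Solomon computation for the complement of a generic affine arrangement of $m$ hyperplanes in $\C^k$ gives the Poincar\'e polynomial $\sum_{i=0}^{k}\binom{m}{i}s^i$. Setting $m=k+n+1$ and applying the identity $\sum_{i=0}^{k}(-1)^{i}\binom{m}{i}=(-1)^{k}\binom{m-1}{k}$ yields $|\chi(T_x)|=\binom{k+n}{k}$, which is exactly the claimed rank; since the answer depends only on the combinatorial type of the arrangement, this rank is independent of $x\in X$, as expected.

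The main obstacle is the vanishing step. All the work is in verifying that the non-integer hypothesis on $\a_0,\dots,\a_{k+n+1}$ really falls inside the hypotheses of the Aomoto--Kita theorem (i.e., that no non-trivial sum of the $\a_j$ along a dense edge of the arrangement becomes integral, which in a truly generic arrangement reduces to the individual non-integrality of each $\a_j$). Granted that, the computation of the rank is purely combinatorial and, pleasantly, identical for $\na^{\a}$ and $\na^{-\a}$.
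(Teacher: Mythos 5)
The paper offers no proof of this Fact at all---it is quoted directly from \cite{AK}---and your sketch is essentially the argument behind that citation: for $x\in X$ the condition $|\tx\la J\ra|\ne 0$ for all $J\in\cJ$ makes $L_0,\dots,L_{k+n+1}$ a general-position arrangement of $k+n+2$ hyperplanes in $\P^k$, non-integrality of every exponent gives concentration of the twisted cohomology in degree $k$, and the Orlik--Solomon/Euler-characteristic count $\sum_{i=0}^{k}(-1)^i\binom{k+n+1}{i}=(-1)^k\binom{k+n}{k}$ yields the stated rank, uniformly in $x$. One small correction: $\a_0\notin\Z$ is \emph{not} forced by (\ref{eq:non-int}) together with $\a_1,\dots,\a_{k+n+1}\notin\Z$ (a sum of non-integers can be an integer); rather, it is part of the standing hypothesis of Section 3, which assumes all of $\a_0,\dots,\a_{k+n+1}$ to lie in $\C-\Z$, and that is what you should invoke for the resonance condition at the hyperplane at infinity.
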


There is the intersection pairing $\cI$ between 
$H^k(\W^\bu(T_x),\na^{\a})$ and $H^k(\W^\bu(T_x),\na^{-\a})$.

\begin{fact}[\cite{M1}]
\label{fact:intersection}
For $J=\{ j_0,\dots,j_k \}$ and $J'=\{ j'_0,\dots,j'_k \}$, we have  
$$
\cI(\f\la J\ra,\f\la J'\ra)=
% (2\pi\sqrt{-1})^k
\left\{
\begin{array}{cl}
(2\pi\sqrt{-1})^k \cdot 
\dfrac{\sum_{j\in J}\a_{j}}
{\prod_{j\in J}\a_{j}}
%\dfrac{\a_{j_0}+\a_{j_1}+\cdots+\a_{j_k}}
%{\a_{j_0}\a_{j_1}\cdots\a_{j_k}}
&\textrm{if } J=J',\\[5mm]
(2\pi\sqrt{-1})^k \cdot 
\dfrac{(-1)^{p+q}}
{\prod_{j\in J\cap J'}\a_{j}}
&\textrm{if } \#(J\cap J')=k,\\[5mm]
0&\textrm{otherwise,}
\end{array}
\right.
$$
where we assume that $J-\{j_p\}=J'-\{j'_q\}$ 
in the case of $\#(J\cap J')=k$.
\end{fact}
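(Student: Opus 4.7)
The plan is to compute $\cI(\f\la J\ra,\f\la J'\ra)$ by passing to a compactly supported representative of the first class and localizing the resulting integral at the zero-dimensional strata of the arrangement $\{L_{j_p}=0\}_{p=0}^{k}$. First I would set up the natural isomorphism $H_c^k(\Omd(T_x),\na^\a)\simeq H^k(\Omd(T_x),\na^\a)$ (the twisted de Rham theorem in this setting), so that $\cI(\f,\psi)$ equals $\int_{T_x}\tilde\f\wedge\psi$, where $\tilde\f$ is any compactly supported representative of $\f$. The key step in constructing $\tilde\f\la J\ra$ is to note that near each hyperplane $\{L_{j_p}=0\}$, the pole of $\f\la J\ra$ is killed modulo $\na^\a$ by subtracting a $\na^\a$-exact form built from a bump function, because the local equation $\na^\a(\rho\cdot\eta)=\rho\,\dt L/L\cdot\a_{j_p}+(\text{regular})$ can be solved explicitly in terms of $1/\a_{j_p}$.

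Next, I would localize the integral of $\tilde\f\la J\ra\wedge\f\la J'\ra$ at the isolated intersection points where $\tilde\f\la J\ra$ differs from $\f\la J\ra$, namely the $k+1$ vertices of the simplex of $J$. For such a vertex $v_I$ corresponding to a subset $I\subset J$ with $|I|=k$, the form $\f\la J'\ra$ remains singular at $v_I$ only if the same $k$ hyperplanes $\{L_j=0\}_{j\in I}$ are also singular locus of $\f\la J'\ra$, i.e.\ $I\subset J'$. This observation immediately produces the case split: $\#(J\cap J')<k$ yields no common vertex and hence $\cI=0$; $\#(J\cap J')=k$ yields exactly one vertex, corresponding to $I=J\cap J'$; and $J=J'$ yields all $k+1$ vertices of the simplex.

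At a common vertex $v_I$, choose local coordinates so that the $k$ hyperplanes from $I$ become coordinate hyperplanes. The local residue of $\tilde\f\la J\ra\wedge \f\la J'\ra$ decomposes into a product of $k$ one-dimensional residues, each contributing a factor $(2\pi\sqrt{-1})/\a_j$ for $j\in I$, by the standard one-variable computation $\int\rho(r)\dd(r^{-\a_j})\wedge \dt r/r = 2\pi\sqrt{-1}/\a_j$ (up to sign) applied iteratively. Using the expansion \eqref{eq:log-form} of $\f\la J\ra$ and $\f\la J'\ra$ at $v_I$, the coefficient in front of this residue is $(-1)^{p+q}$, where $p,q$ are the indices of the hyperplanes \emph{absent} from $I$, i.e.\ $J-\{j_p\}=I=J'-\{j'_q\}$. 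Summing over the $k+1$ vertices when $J=J'$ and using the partial fraction identity $\sum_p \frac{1}{\prod_{j\ne j_p}\a_j}=\frac{\sum_j \a_j}{\prod_j \a_j}$ yields the diagonal formula.

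The main obstacle I expect is the sign tracking in the middle case $\#(J\cap J')=k$: pinning down $(-1)^{p+q}$ requires carefully reconciling the orientation induced by the chosen local coordinates with the ordering of hyperplanes inside $J$ and $J'$ that determines the signs in the expansion \eqref{eq:log-form}. The diagonal case is easier, since the symmetry $J=J'$ forces the orientation sign to be $+1$ at each vertex, and the only real work is the partial fraction identity. The analytic part of the argument (existence of $\tilde\f\la J\ra$ and the local residue computation) is standard once the framework is set up, so the heart of the proof is really the combinatorial bookkeeping of intersection patterns of $J$ and $J'$ together with the sign convention.
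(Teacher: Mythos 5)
The paper does not prove this statement at all: it is quoted as a known result with a citation to [M1] (Matsumoto, \emph{Intersection numbers for logarithmic $k$-forms}), and your outline --- passing to a compactly supported representative via bump functions, localizing the integral at the zero-dimensional strata where $k$ of the hyperplanes $\{L_j=0\}_{j\in J}$ meet, computing iterated one-variable residues $2\pi\sqrt{-1}/\a_j$, and finishing with the sign bookkeeping from (\ref{eq:log-form}) and the partial-fraction identity --- is precisely the method of that cited reference. So your proposal is correct in strategy and essentially reproduces the proof the paper delegates to [M1], with the understood caveat that the vanishing of contributions from the higher-dimensional strata is asserted rather than carried out.
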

Note that if we regard $\a_i$'s as indeterminants, we have 
$\cI(\f\la J\ra,\f\la J'\ra)^{\vee}=(-1)^k \cdot \cI(\f\la J\ra,\f\la J'\ra)$.

\begin{proposition}
\label{prop:basis}
Let $p$ and $q$ be different two elements of the set 
$\{0,1,\dots,k+n+1\}$. We set 
\begin{eqnarray*}
_{q}\cJ_{p}&=&\{J\in \cJ\mid q\notin J,\ p\in J\},\\
_{p}\cJ_{q}&=&\{J\in \cJ\mid p\notin J,\ q\in J\}.
\end{eqnarray*}
Then $\{\f\la J\ra\}_{J\in \under{p}{\cJ}{q}}$ and 
$\{\f\la J'\ra\}_{J'\in \under{q}{\cJ}{p}}$ 
are bases of 
$H^k(\W^\bu(T_x),\na^{\pm\a})$.
\end{proposition}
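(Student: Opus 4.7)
I would prove the proposition by exhibiting a non-singular intersection pairing between $\{\f\la J\ra\}_{J\in\under{p}{\cJ}{q}}$ and $\{\f\la J'\ra\}_{J'\in\under{q}{\cJ}{p}}$, using Fact \ref{fact:intersection} as the sole computational input. First, a cardinality check: choosing $J\in\under{p}{\cJ}{q}$ amounts to choosing its remaining $k$ indices from $\{0,\dots,k+n+1\}\setminus\{p,q\}$, so $\#(\under{p}{\cJ}{q})=\#(\under{q}{\cJ}{p})=\binom{k+n}{k}$, matching $\dim H^k(\W^\bu(T_x),\na^{\pm\a})$ from the Fact attributed to \cite{AK}. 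It therefore suffices to establish linear independence.

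The combinatorial core is that for $J\in\under{p}{\cJ}{q}$ and $J'\in\under{q}{\cJ}{p}$, one has $p\in J\setminus J'$ and $q\in J'\setminus J$, so $J\neq J'$, and the middle case of Fact \ref{fact:intersection} (namely $\#(J\cap J')=k$) forces $J'=(J\setminus\{p\})\cup\{q\}$. This yields a canonical bijection $\sigma\colon\under{p}{\cJ}{q}\to\under{q}{\cJ}{p}$ defined by $\sigma(J)=(J\setminus\{p\})\cup\{q\}$. Ordering both index sets compatibly with $\sigma$, the intersection matrix $(\cI(\f\la J\ra,\f\la J'\ra))_{J,J'}$ becomes diagonal, with diagonal entries of the form $(\tpi)^k\cdot(\pm1)/\prod_{j\in J\setminus\{p\}}\a_j$, which are nonzero because each $\a_j\in\C-\Z$. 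Hence this pairing matrix is invertible.

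Invertibility of the pairing matrix between $\{\f\la J\ra\}_{J\in\under{p}{\cJ}{q}}\subset H^k(\W^\bu(T_x),\na^{\a})$ and $\{\f\la J'\ra\}_{J'\in\under{q}{\cJ}{p}}\subset H^k(\W^\bu(T_x),\na^{-\a})$ forces both collections to be linearly independent in their ambient cohomology groups; combined with cardinality $=$ rank, each is a basis of the corresponding space. Repeating the same argument after swapping the roles of $\a$ and $-\a$ (equivalently, appealing to $\cI^\vee=(-1)^k\cI$) handles the remaining sign, showing that each of the two collections is a basis of $H^k(\W^\bu(T_x),\na^{\pm\a})$ for both choices of sign. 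I expect no serious obstacle: the argument is a direct readout of Fact \ref{fact:intersection} via the bijection $\sigma$, and the only bookkeeping is to confirm that the numerator signs are irrelevant for invertibility and that the parameter relation $\sum_j\a_j=0$ causes no cancellation, which is immediate since only proper subsets of the $\a_j$'s appear in the denominators of the diagonal.
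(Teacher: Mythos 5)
Your proposal is correct and follows essentially the same route as the paper: pair the two families via the canonical bijection that swaps $p$ and $q$, observe from Fact \ref{fact:intersection} that the resulting intersection matrix is diagonal with nonzero entries $(\pm 2\pi\sqrt{-1})^k/\prod\a_{j}$, and conclude by the rank count $\binom{k+n}{k}$. The only blemish is a harmless swap of roles (for $J\in\under{p}{\cJ}{q}$ one has $q\in J$, $p\notin J$, so the bijection is $J\mapsto (J\setminus\{q\})\cup\{p\}$, not $(J\setminus\{p\})\cup\{q\}$), which does not affect the argument.
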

\begin{proof}
Set $J=\{p,j_1,\dots,j_k\}\in \under{q}{\cJ}{p}$ and 
$J'=\{j_1,\dots,j_k,q\}\in \under{p}{\cJ}{q}$.
% $$\psi\la J'\ra =\dt\log(L_{j_1}/L_{j})\wedge \cdots \wedge 
% \dt\log(L_{j_k}/L_{j}).$$
Note that there are $\binom{k+n}{k}$ ways to get such $J$ 
and $J'$ for fixed $p$ and $q$.
We align $\f\la J\ra$'s and $\f\la J'\ra$'s
by the lexicographic order of $(j_1,\dots,j_k)$.
Then the intersection matrix for $\f \la J \ra$ and $\f \la J' \ra$ becomes 
diagonal matrix with diagonal entries 
$$\frac{(-2\pi\sqrt{-1})^k}{\a_{j_1}\cdots\a_{j_k}}.$$
Thus they are bases of 
$H^k(\W^\bu(T_x),\na^{\pm\a})$.
\end{proof}

We define vector bundles of rank $r=\binom{k+n}{k}$ over $X$ with fibers 
$H^k(\W^\bu(T_x),\na^{\a})$ and $H^k(\W^\bu(T_x),\na^{-\a})$ by
$$\cH^\a=\bigcup_{x\in X} H^k(\W^\bu(T_x),\na^{\a}),\quad  
\cH^{-\a}=\bigcup_{x\in X} H^k(\W^\bu(T_x),\na^{-\a}),$$
respectively. We can regard 
$\{\f\la J\ra\}_{J\in \under{p}{\cJ}{q}}$ 
and $\{\f\la J'\ra\}_{J'\in \under{q}{\cJ}{p}}$ 
as global frames of these vector bundles. 
The operators $\na_x^{\a}$ and $\na_x^{-\a}$ are regarded as connections 
on $\cH^\a$ and $\cH^{-\a}$, i.e., they are $\C(\a)$-linear maps
$$\na_x^{\pm\a}:\Gamma(\cH^{\pm\a})\to \Gamma(\W^1(X)\otimes \cH^{\pm\a})$$
satisfying 
$$
\na_x^{\pm\a}(f\f)=d_x f\otimes \f+f\na_x^{\pm\a}(\f), 
$$
for $f\in \W^0(X)$ and $\f\in \Gamma(\cH^{\pm\a})$, 
where $\W^l(X)$ is a space of rational $l$-forms 
with poles only along the complement of $X$, 
$\Gamma(\cV)$ denotes the $\W^0(X)$-module of sections of $\cV$.
They are called the Gauss-Manin connections on 
the vector bundles $\cH^{\pm \a}$.
Note that
\begin{align}
\label{eq:pd-GMpd}
\displaystyle{\frac{\pa}{\pa x_{ij}} \int_\square
\Big(\prod_{j=1}^{n+k+1} L_j^{\a_j}\Big) \f}
&\displaystyle{=\int_\square\Big(\prod_{j=1}^{n+k+1} L_j^{\a_j}\Big) 
\naa_{ij}\big(\f\big),} \\
\label{eq:exd-GM}
\displaystyle{d_x \int_\square\Big(\prod_{j=1}^{n+k+1} L_j^{\a_j}\Big) \f}
&\displaystyle{=\int_\square\Big(\prod_{j=1}^{n+k+1} L_j^{\a_j}\Big) 
\naa_{x}\big(\f\big), }
\end{align}
for $\f\in \Gamma(\cH^\a)$, where $\square$ is a twisted cycle 
associated with $\prod_{j=1}^{n+k+1} L_j^{\a_j}$ 
(refer to \cite[\S 3.2]{AK} for its definition). 
These mean that the partial differential operators $\dfrac{\pa}{\pa x_{ij}}$ 
and the exterior derivative $\dx$ are translated into 
the operators $\naa_{ij}$ and the connection $\na_x^\a$ through 
the integration with respect to 
the kernel function $\prod_{j=1}^{n+k+1} L_j^{\a_j}$.

The intersection form $\cI$ is extended to a pairing 
between $\Gamma(\cH^\a)$ and $\Gamma(\cH^{-\a})$. 
We can regard $\cH^{-\a}$ as the dual of $\cH^{\a}$ by 
the intersection form $\cI$, since $\cI$ is a perfect pairing. 
By the compatibility of the connections and the intersection form, 
we have the following.

\begin{proposition}
\label{prop:parallel}
The intersection form $\cI$ satisfies 
$$\dx\cI(\f,\f')=\cI(\na_x^\a(\f),\f')+\cI(\f,\na_x^{-\a}(\f')),$$
for $\f\in \Gamma(\cH^\a)$ and $\f'\in \Gamma(\cH^{-\a})$.
\end{proposition}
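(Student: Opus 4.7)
The identity asserts that $\cI$ is parallel for the Gauss-Manin connections on $\cH^{\pm\a}$; my strategy is to reduce to a basis and then verify using a period-intersection argument. Both sides are $\W^0(X)$-bilinear in $(\f,\f')$ and satisfy a Leibniz rule: for $f,g \in \W^0(X)$, a short computation using $\na_x^{\pm\a}(f\f) = \dx f \otimes \f + f\, \na_x^{\pm\a}(\f)$ shows that the identity for $(f\f, g\f')$ reduces to the identity for $(\f, \f')$. Expanding arbitrary sections in the bases of Proposition \ref{prop:basis} as $\f = \sum_J f_J \f\la J\ra$ and $\f' = \sum_{J'} g_{J'} \f\la J'\ra$, it thus suffices to verify the identity for basis pairs. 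By Fact \ref{fact:intersection}, $\cI(\f\la J\ra, \f\la J'\ra)$ lies in $\C(\a)$ and is independent of $x$, so the left-hand side vanishes; I must then prove
$$
\cI(\na_x^\a \f\la J\ra, \f\la J'\ra) + \cI(\f\la J\ra, \na_x^{-\a} \f\la J'\ra) = 0.
$$

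For this I will invoke the period-intersection duality. Pick a basis $\{\square_i\}$ of the twisted homology $H_k(T_x, \cL^\a)$ together with the $\cL^{-\a}$-dual basis, and let $H$ be their topological intersection matrix, which is locally constant in $x$. Setting $P = \prod_j L_j^{\a_j}$, the relation (\ref{eq:exd-GM}) gives $\dx P^\pm = P^\pm A^\pm$, where $P^\pm$ are the period matrices with entries $\int_{\square_i} P^{\pm 1}\, \f\la J\ra$ and $A^\pm$ are the connection matrices of $\na_x^{\pm\a}$ in the chosen cohomology bases. The de Rham--Betti period-intersection relation (see \cite{M1}) identifies the cohomology intersection matrix $\cI^c = (\cI(\f\la J\ra, \f\la J'\ra))_{J,J'}$ with $(P^+)^T H^{-1} P^-$ up to a nonzero constant; differentiating this identity and using $\dx H = 0$ yields $\dx \cI^c = (A^+)^T \cI^c + \cI^c A^-$, which is precisely the matrix form of the required vanishing.

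The main obstacle is the invocation of the period-intersection relation, which is a nontrivial theorem in its own right; making the factors of $(2\pi\sqrt{-1})^k$ and the sign conventions match up requires care. As an abstract alternative one observes that $\cI$ factors as cup product followed by a trace map to (regularized) top cohomology $H^{2k} \cong \C$, which carries the trivial Gauss-Manin connection; $\cI$ is then automatically a morphism of flat bundles with flat target, hence parallel, which is exactly the Leibniz identity to be proved.
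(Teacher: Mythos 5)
The paper offers no proof of this proposition at all: it simply asserts it with the phrase ``by the compatibility of the connections and the intersection form,'' treating it as a standard property of the Gauss--Manin connection and the twisted intersection pairing. Your proposal supplies an actual argument, and it is essentially sound: the Leibniz reduction to the frame $\{\f\la J\ra\}$ is correct (sections of $\cH^{\pm\a}$ are $\W^0(X)$-combinations of the frame, and Fact \ref{fact:intersection} makes the left-hand side vanish on frame pairs), and the remaining identity on frame pairs does follow by differentiating the twisted Riemann period relation, since the topological intersection matrix of twisted cycles is locally constant in $x$ while the period matrices obey the Gauss--Manin equations (\ref{eq:exd-GM}). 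Two caveats: the period--intersection identity you invoke is not in \cite{M1} (which computes cohomological intersection numbers); it is the twisted period relation of Cho--Matsumoto, so the citation should be adjusted, and your formula $\dx P^{\pm}=P^{\pm}A^{\pm}$ needs the transpose fixed to match the convention $\na_x^{\a}\Phi=\Psiax\cdot\Phi$, as you anticipate. Your closing ``abstract alternative'' (cup product followed by the trace to top cohomology with its trivial connection) is in fact the content of the compatibility the paper silently invokes, so it is closest in spirit to the paper, but stated as ``automatically a morphism of flat bundles'' it is really a restatement of the Leibniz identity rather than an independent proof; if you want a self-contained argument, the period-relation route (or a direct computation with compactly supported representatives as in the construction of $\cI$) is the one to keep.
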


Let $\Gamma_0(\cH^\a)$ (resp. $\Gamma_0(\cH^{-\a})$)  
be the vector space in $\Gamma(\cH^\a)$ (resp. $\Gamma(\cH^{-\a})$) 
spanned by $\f\la J\ra$'s $(J\in \cJ)$ over the field $\C(\a)$. 
Then any elements $\f\in \Gamma_0(\cH^\a)$ and $\f'\in \Gamma_0(\cH^{-\a})$ 
satisfy 
\begin{equation}
\label{eq:flat}
\dx\cI(\f,\f')=0
\end{equation} 
by Fact \ref{fact:intersection}.

% Since 
% $$
% d_x \int_\De\Big(\prod_{j=1}^{n+k+1} L_j^{\a_j}\Big) \f
% =\int_\De\Big(\prod_{j=1}^{n+k+1} L_j^{\a_j}\Big) 
% \na_x\big(\f\big), 
% $$
% for any section $\f$, we compute the connection matrix $\Psiax$ of 
% $\na_x^\a$ with respect to the frame 
% $\tr (\dots,\f\la  J\ra, \dots)_{J\in \dot \cJ}$:   
% $$ \tr (\dots,\na_x^\a\f\la  J\ra, \dots)_{J\in \dot \cJ}
% =\Psiax \tr (\dots,\f\la  J\ra, \dots)_{J\in \dot \cJ},
% $$
% where $\Psiax$ is a square matrix of size $\binom{k+n}{k}$ 
% with entries in the space $\W^1(X)$ of holomorphic $1$-forms on $X$, and 
% % and give its expression by the intersection form $\cI$, 
% %which is independent of the choice of frames.  

We put $\dot{\cJ}=\under{k+n+1}{\cJ}{0}$, and align its elements lexicographically. 
We denote $\dot{\cJ} =\{ J^1 ,J^2,\ldots ,J^r \}$, $r=\binom{k+n}{k}$, and  
define a column vector $\Phi =\tr ( \f\la J^1\ra , \f\la J^2 \ra ,\ldots ,\f\la J^r \ra )$. 
We compute the connection matrix $\Psiax$ of $\na_x^\a$  
with respect to the frame 
$\{ \f\la J\ra \}_{J\in \dot{\cJ}}$; 
% $\dot{\cJ} =\{ J^1=\dot{J} ,J^2,\ldots ,J^r \}$, $r=\binom{k+n}{k}$; 
i.e., it satisfies 
\begin{equation}
\label{eq:GM-connection}
\na_x^\a \Phi =\Psiax \cdot \Phi. 
%  \tr (\na_x^\a\f\la  J^1\ra, \dots,\na_x^\a\f\la  J^r\ra)
% =\Psiax \tr (\f\la  J^1\ra, \dots, \f\la  J^r\ra). 
% %  \tr (\dots,\na_x^\a\f\la  J\ra, \dots)_{J\in \dot \cJ}
% % =\Psiax \tr (\dots,\f\la  J\ra, \dots)_{J\in \dot \cJ}. 
\end{equation}
We remark that 
$\Psiax$ is a square matrix of size $\binom{k+n}{k}$ with entries 
in $\W^1(X)$. 
% and 
% the condition $1\le j_1<\cdots <j_k\le n+k$ is assumed 
% when  $J\in \dot{\cJ}$ is expressed as $\{ 0,j_1 \ldots ,j_k \}$.   
%$\tr (\dots,\f\la  J\ra, \dots)_{J\in \dot \cJ}$:   
%$$
%\dot \cJ=\under{k+n+1}{\cJ}{0}=\{J=\{0,j_1,\dots,j_k\}\mid 
%1\le j_1<\cdots <j_k\le k+n\}.$$
We also express $\na_x^\a$  by the intersection form $\cI$ without 
taking a frame.
By the decomposition (\ref{eq:decomp-conn}) of $\na_x^\a$, we have 
$$\Psiax=\sum_{\substack{1\le i\le k\\1\le j\le n}}
\Psiij \dd x_{ij},$$
where the matrix $\Psiij$ is obtained by 
the action of operator $\naa_{ij}$ on the frame $\{ \f \la J\ra\}_{J\in \dot{\cJ}}$.
We study the operator $\na_{ij}^\a$.
% $$
% \frac{\pa}{\pa x_{ij}} \int_\De\Big(\prod_{j=1}^{n+k+1} L_j^{\a_j}\Big) \f
% =\int_\De\Big(\prod_{j=1}^{n+k+1} L_j^{\a_j}\Big) 
% \na_{ij}^\a\big(\f\big), 
% $$
\begin{lemma}
\label{lem:na-ij-simple}
Let $J=\{j_0,\dots,j_k\}$ be an element of $\cJ$.
Suppose that 
$$1\le i\le k,\quad1\le j\le n,\quad  k+j\notin J.$$
Then 
$$\naa_{ij} (\f\la J\ra )=
\a_{k+j}\sum_{p=0}^k 
%(-1)^{???}
\frac{|\tx\la \under{j_p}{J}{i}\ra|}{|\tx\la \under{j_p}{J}{k+j}\ra|}
\f\la \under{j_p}{J}{k+j} \ra,$$
where 
\begin{eqnarray*}
_{j_p}J_i&=&\{j_0,\dots,j_{p-1},i,j_{p+1},\dots,j_k\},\\
_{j_p}J_{k+j}&=&\{j_0,\dots,j_{p-1},k+j,j_{p+1},\dots,j_k\}.
\end{eqnarray*}
\end{lemma}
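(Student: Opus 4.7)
The plan is to unwind the definition of $\naa_{ij}$ and show that the partial derivative term vanishes, so that only the multiplication by $\a_{k+j} t_i / L_{k+j}$ survives, after which Lemma \ref{lem:ti-exp} converts the result into the desired linear combination.

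First I would observe that, since $k+j \notin J$, none of the linear forms $L_{j_0},\dots,L_{j_k}$ appearing in the denominator of $\f\la J\ra = |\tx\la J\ra|\dd t/\prod_p L_{j_p}$ involves the variable $x_{ij}$. Indeed, the variable $x_{ij}$ occurs in $L_{k+j'}$ only when $j'=j$. Moreover, $x_{ij}$ sits in the $(i,k+j)$-entry of $\tx$, so the minor $|\tx\la J\ra|$ is likewise independent of $x_{ij}$ because $k+j\notin J$. Consequently $\pa \f\la J\ra/\pa x_{ij}=0$, and hence
\begin{equation*}
\naa_{ij}(\f\la J\ra)=\frac{\a_{k+j}t_i}{L_{k+j}}\cdot\f\la J\ra
=\frac{\a_{k+j}\,t_i\,|\tx\la J\ra|\,\dd t}{L_{k+j}\prod_{p=0}^k L_{j_p}}.
\end{equation*}

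Next I would apply Lemma \ref{lem:ti-exp} to the subset $J=\{j_0,\dots,j_k\}$ itself in order to rewrite $t_i$ as
\begin{equation*}
t_i=\frac{1}{|\tx\la J\ra|}\sum_{p=0}^k |\tx\la \under{j_p}{J}{i}\ra|\,L_{j_p}.
\end{equation*}
Plugging this into the previous expression, the factor $|\tx\la J\ra|$ cancels and each $L_{j_p}$ cancels with the corresponding factor in $\prod_{q=0}^kL_{j_q}$, yielding
\begin{equation*}
\naa_{ij}(\f\la J\ra)=\a_{k+j}\sum_{p=0}^k \frac{|\tx\la \under{j_p}{J}{i}\ra|\,\dd t}{L_{k+j}\prod_{q\ne p} L_{j_q}}.
\end{equation*}

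Finally, I would recognize the summand as a scalar multiple of $\f\la \under{j_p}{J}{k+j}\ra$: by the formula in the Fact preceding this section, applied to the index set $\under{j_p}{J}{k+j}=\{j_0,\dots,j_{p-1},k+j,j_{p+1},\dots,j_k\}$,
\begin{equation*}
\f\la \under{j_p}{J}{k+j}\ra=\frac{|\tx\la \under{j_p}{J}{k+j}\ra|\,\dd t}{L_{k+j}\prod_{q\ne p} L_{j_q}},
\end{equation*}
so that $\dd t/(L_{k+j}\prod_{q\ne p}L_{j_q})=\f\la \under{j_p}{J}{k+j}\ra/|\tx\la \under{j_p}{J}{k+j}\ra|$. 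Substituting this identity into each term of the sum gives exactly the stated formula. There is no real obstacle here beyond the bookkeeping of indices; the only subtle point is confirming the independence of $\f\la J\ra$ from $x_{ij}$, which is why the hypothesis $k+j\notin J$ is essential.
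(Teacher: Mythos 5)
Your proposal is correct and follows essentially the same route as the paper's proof: observe $(\pa/\pa x_{ij})\f\la J\ra=0$, so only multiplication by $\a_{k+j}t_i/L_{k+j}$ remains, then expand $t_i$ via Lemma \ref{lem:ti-exp} and identify each summand with $\f\la \under{j_p}{J}{k+j}\ra$ through the determinant formula for $\f\la\cdot\ra$. The only difference is that you spell out why $\f\la J\ra$ is independent of $x_{ij}$ (using $k+j\notin J$), a point the paper leaves implicit.
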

\begin{proof}
Since $(\pa/\pa x_{ij})\f\la J\ra=0$, we have 
\begin{eqnarray*}
& &\naa_{ij} (\f\la J\ra )=\frac{\a_{k+j}t_i}{L_{k+j}}\f\la J\ra\\
&=&\frac{\a_{k+j}}{|\tx\la J\ra|}
\left(\sum_{p=0}^k|\tx\la \under{j_p}{J}{i}\ra|  L_{j_p}\right)
\frac{|\tx\la J\ra|\dd t}{L_{k+j}L_{j_0}L_{j_1}\cdots L_{j_k}}\\
&=&
\a_{k+j}\sum_{p=0}^k 
\frac{|\tx\la \under{j_p}{J}{i}\ra|}{|\tx\la \under{j_p}{J}{k+j}\ra|}
\f\la \under{j_p}{J}{k+j} \ra .
\end{eqnarray*}
Here we use Lemma \ref{lem:ti-exp} for the expression of $t_i$.
\end{proof}

For a fixed $i$ and $j$, we take a basis $\f\la J\ra$'s
of $H^k(\W^\bu(T_x),\na^{\a})$, where  
$J$ runs over the subset 
%of $\cJ$ given by 
$_{k+j}\cJ_i=\{J\in \cJ\mid k+j\notin J,\ i\in J\}$
of $\cJ$
with cardinality $\binom{k+n}{k}$.  
Under this condition, we have 
$$ \frac{|\tx\la \under{j_p}{J}{i}\ra|}{|\tx\la \under{j_p}{J}{k+j}\ra|}\dd x_{ij}
=(-1)^{p+i}\frac{\pa}{\pa x_{ij}}\log | \tx\la \under{j_p}{J}{k+j}\ra | \dd x_{ij},
$$
% since 
by Fact \ref{fact:dlog} and 
$$
\tx\la \under{j_p}{J}{i}\ra=
\bordermatrix{
 &0&     \cdots &p-1  &p& p+1& \cdots &k\cr
%0&1&     \cdots & 1&  0& 1  & \cdots &1 \cr
0&\tx_{0,j_0}&\cdots&\tx_{0,j_{p-1}}&0&\tx_{0,j_{p+1}}
 &\cdots&\tx_{0,j_{k}}\cr
\vdots& \vdots&\cdots&\vdots &\vdots &\vdots&\cdots&\vdots\cr
i&\tx_{i,j_0}&\cdots&\tx_{i,j_{p-1}}&1&\tx_{i,j_{p+1}}
 &\cdots&\tx_{i,j_{k}}\cr
\vdots& \vdots&\cdots&\vdots &\vdots &\vdots&\cdots&\vdots\cr
k&\tx_{k,j_0}&\cdots&\tx_{k,j_{p-1}}&0&\tx_{k,j_{p+1}}
 &\cdots&\tx_{k,j_{k}}\cr
},
$$
$$
\tx\la \under{j_p}{J}{k+j}\ra=
\bordermatrix{
 &0&     \cdots &p-1  &p& p+1& \cdots &k\cr
0&\tx_{0,j_0}&\cdots&\tx_{0,j_{p-1}}&1&\tx_{0,j_{p+1}} &\cdots&\tx_{0,j_{k}}\cr
\vdots& \vdots&\cdots&\vdots &\vdots &\vdots&\cdots&\vdots\cr
i&\tx_{i,j_0}&\cdots&\tx_{i,j_{p-1}}&x_{i,j}&\tx_{i,j_{p+1}}
 &\cdots&\tx_{i,j_{k}}\cr
\vdots& \vdots&\cdots&\vdots &\vdots &\vdots&\cdots&\vdots\cr
k&\tx_{k,j_0}&\cdots&\tx_{k,j_{p-1}}&x_{k,j}&\tx_{k,j_{p+1}}
 &\cdots&\tx_{k,j_{k}}\cr
}.
$$
Note that 
the basis change transformation matrix 
from $\{\f \la J \ra \}_{J\in \under{k+j}{\cJ}{i}}$ to $\{\f \la J \ra \}_{J\in \dot\cJ}$ 
is independent of $x_{ij}$.  
Thus the coefficient $\Psiij$ of $\dd x_{ij}$ in the connection matrix 
$\Psiax$ can be expressed as 
\begin{align}
  \label{eq:psi-ij}
  \Psiij =\sum_{J\in \cJ^\circ, J\ni k+j} 
\MJij \frac{\pa}{\pa x_{ij}} \log |\tx\la J\ra |,
\end{align}
where $\MJij$ are square matrices of size 
$r=\binom{k+n}{k}$ which are independent of 
any entries of $x$.

\begin{lemma}
\label{lem:eigenvals}
Suppose that $\a_J=\a_{j_0}+\cdots +\a_{j_k}\ne 0$ for 
$J=\{j_0,\dots,j_k\}\in \cJ^\circ$ 
with $i\notin J$, $k+j\in J$.
\begin{enumerate}
\item 
The matrices $\MJij$ are independent of $i$ and $j$. 
\item 
The eigenvalues of the matrix $\MJij$ are $\a_J$ and $0$. 
\item 
Its eigenspace of eigenvalue $\a_J$ is $1$-dimensional, and 
that of eigenvalue $0$ is $(r-1)$-dimensional. 
% \item 
% Its eigenspace of eigenvalue $\a_J$ is $1$-dimensional, and 
% that of eigenvalue $0$ is $(\binom{k+n}{k}-1)$-dimensional. 
\end{enumerate}
\end{lemma}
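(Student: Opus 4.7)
The plan is to establish part (1) by a uniqueness-of-decomposition argument and then to compute $M_J$ explicitly in a well-chosen basis to derive parts (2) and (3).

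For part (1), I would observe that $\Psiax=\sum_{i,j}\Psi_{ij}\,dx_{ij}$ is intrinsically defined once a cohomology frame is fixed, and by (\ref{eq:psi-ij}) its only singularities are simple logarithmic poles along the pairwise distinct irreducible divisors $D_J=\{|\tx\la J\ra|=0\}$ for $J\in\cJ^\circ$. The 1-forms $\{d\log|\tx\la J\ra|\}_{J\in\cJ^\circ}$ are therefore linearly independent modulo holomorphic 1-forms on $X$, and the decomposition $\Psiax=\sum_{J\in\cJ^\circ}M_J(\a)\,d\log|\tx\la J\ra|$ is unique. Comparing with (\ref{eq:psi-ij}) then forces $M_J^{ij}(\a)=M_J(\a)$, independent of $(i,j)$.

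For parts (2) and (3), fix $J\in\cJ^\circ$ with $\a_J\ne 0$. By Lemma \ref{lem:iJj+k} I would pick $k+j\in J$ and any $i\in\{1,\dots,k\}$ (so automatically $i\notin J$), and compute $M_J$ in the basis $\cB=\{\varphi\la J'\ra\}_{J'\in {}_{k+j}\cJ_i}$ from Proposition \ref{prop:basis}. By Lemma \ref{lem:na-ij-simple} combined with the logarithmic-derivative identity used just before (\ref{eq:psi-ij}), the operator $\na^\a_{ij}$ sends each $\varphi\la J'\ra\in \cB$ to a sum of terms proportional to $\partial_{x_{ij}}\log|\tx\la J''\ra|\,\varphi\la J''\ra$ with $k+j\in J''$. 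I would then re-expand every such $\varphi\la J''\ra$ in $\cB$ via the Aomoto-type cohomological relations coming from $\na^\a$-coboundaries, and extract the coefficient of $\partial_{x_{ij}}\log|\tx\la J\ra|$ to read off $M_J$.

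The decisive combinatorial observation I would exploit is that, for each $J'\in\cB$, the class $\varphi\la J\ra$ appears among the target classes $\varphi\la {}_{j_p}J'_{k+j}\ra$ in at most one way, namely when $J'=(J\setminus\{k+j\})\cup\{i\}$ with $j_p=i$. After grouping all contributions by divisor, I expect to obtain a rank-one expression $M_J(\a)=v\,w^{T}$ in which $v$ encodes the expansion of $\varphi\la J\ra$ in $\cB$ and $w$ records the scalar residues produced by Lemma \ref{lem:na-ij-simple}; both $v$ and $w$ depend only on $J$ and on the parameters $\a_\ell$ for $\ell\in J$. Granting the rank-one form, parts (2) and (3) are immediate: $\C v$ is the $1$-dimensional $\a_J$-eigenspace, $\ker w^{T}$ is the $(r-1)$-dimensional kernel, and $\mathrm{tr}(M_J)=w^{T}v=\a_J$.

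The hard part will be the bookkeeping of the re-expansion: one must apply the Aomoto relations carefully and verify that, after grouping by divisor, only the single residue direction $d\log|\tx\la J\ra|$ contributes to $M_J$, so that $M_J$ is genuinely rank one rather than merely of trace $\a_J$. This is the algebraic counterpart of the geometric fact that the monodromy around the discriminantal divisor $D_J$ is a pseudo-reflection attached to a single vanishing cycle with local weight $\exp(2\pi\sqrt{-1}\,\a_J)$.
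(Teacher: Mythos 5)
Your route (a purely algebraic computation of the residue matrices in the adapted basis) is genuinely different from the paper's, but as written it has two gaps that together leave the lemma unproved. First, the argument for part (1) is circular: linear independence of the forms $\dx \log|\tx\la J\ra|$ modulo holomorphic $1$-forms gives \emph{uniqueness} of a decomposition $\Psiax=\sum_{J}\MJa\,\dx\log|\tx\la J\ra|$, not its \emph{existence}. What \eqref{eq:psi-ij} provides is only that each $dx_{ij}$-coefficient separately equals $\sum_{J\ni k+j}\MJij\,\pa\log|\tx\la J\ra|/\pa x_{ij}$ with constant matrices that a priori depend on $(i,j)$; the statement that these single-variable logarithmic derivatives assemble into the total differentials $\dx\log|\tx\la J\ra|$ with one matrix per $J$ is exactly assertion (1), so positing such a decomposition and then "comparing" assumes the conclusion. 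Second, for parts (2) and (3) you reduce everything to the claim that, after re-expanding the targets of $\naa_{ij}$ (which all contain $k+j$ and hence lie outside your basis $\{\f\la J'\ra\}_{J'\in\under{k+j}{\cJ}{i}}$) and collecting the polar part along $|\tx\la J\ra|=0$, the resulting matrix is rank one with trace $\a_J$ --- and you explicitly leave this verification open. That verification is the entire content of the lemma: the direct term from Lemma \ref{lem:na-ij-simple} only produces the single factor $\a_{k+j}$, it is not visible in your sketch where the full sum $\a_J=\sum_{\ell\in J}\a_\ell$ would emerge, and the re-expansion coefficients are rational in $x$ and can themselves have poles along $|\tx\la J\ra|=0$, so neither the rank-one structure nor the trace follows from the "appears in at most one way" observation alone. (Your expected final form is correct --- it matches Proposition \ref{prop:MJ-expression} --- but it is asserted, not derived.)

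For comparison, the paper settles (1)--(3) simultaneously by a local analytic argument rather than cohomological bookkeeping: it constructs a period matrix $\Pi^\a(x)$ over twisted cycles adapted to $J$, shows that all columns but one extend single-valued and holomorphically across the divisor $|\tx\la J\ra|=0$, while the remaining column is $|\tx\la J\ra|^{\a_J}$ times a holomorphic function --- the exponent $\a_J$ arising because every $L'_q$ with $q\notin J$ carries a factor $|\tx\la J\ra|^{-1}$ and $\sum_j\a_j=0$ --- and then reads off from $\dx\Pi^\a(x)=\Psiax\,\Pi^\a(x)$ that the polar part of $\Psiax$ along this divisor is $\Pi^\a_J(x)\,\diag(\a_J,0,\dots,0)\,\Pi^\a_J(x)^{-1}\,\dx\log|\tx\la J\ra|$, which yields the independence of $(i,j)$, the eigenvalues, and the eigenspace dimensions at once. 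If you wish to keep a purely algebraic proof, you must actually carry out the re-expansion (and justify that passing between the frame $\under{k+j}{\cJ}{i}$ and $\dot{\cJ}$ does not introduce poles along $|\tx\la J\ra|=0$); note that the paper's Lemma \ref{lem:0-eigensp}, which identifies the eigenvectors cohomologically, is itself derived \emph{from} this lemma and the intersection form, so it cannot be used as input here.
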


\begin{proof}
We may assume $j_0=k+j$. 
%Let $p$ be an element of the complement of $J$. 
We set 
$$\under{j_0}{\cJ''}{i}=(\{J\}\cup \under{j_0}{\cJ}{i}) -
\{\under{j_0}{J}{i}\},$$
and assume that its first entry is $J$.

We take a point $\dot x\in X$ so that 
$L_j$ ($j\in J$) form a real small simplex.
Let $U_{\dot x}$ be a small open set in $\C^{k\times n}$ 
including $\dot x$ and points with $|\tx\la J\ra |=0$.
For any $J''\in \under{j_0}{\cJ''}{i}$, we make a twisted cycle 
$\De\la J''\ra$ by using $L_j$ $(j\in J'')$. 
By computing the intersection numbers of 
$\De\la J''\ra$,  we can show that they form a basis of 
a twisted homology group. 
We construct a period matrix 
$$\Pi^\a(x)=\left(\int_{\De\la J''\ra} 
\Big(\prod_{j=1}^{k+n+1} L_j^{\a_j}\Big) \f\la J'\ra
\right)_{J'\in \dot \cJ, J''\in \under{j_0}{ \cJ''}{i}}
$$
on $U_{\dot x}\cap X$. By the perfectness of the pairing between 
the twisted homology and cohomology groups, it is invertible. 
When $x$ turns around the divisor $|\tx\la J\ra |=0$, 
the argument of each $L_i$ on $\De\la J''\ra$ 
is almost unchanged for $J''\in \under{j_0}{ \cJ''}{i}-\{J\}$. 
Moreover, the integrals over $\De\la J''\ra$ $(J''\in \under{j_0}{\cJ ''}{i}-\{J\})$
are valid on the divisor $|\tilde x\la J\ra|=0$ in $U_{\dot x}$. 
Thus the entries $\Pi^\a(x)$ except in the first column 
are single-valued and holomorphic on $U_{\dot x}$. 
We consider the behavior of the first column of $\Pi^\a(x)$.
To compute the integrals, we use the coordinate change such that 
$L_j$ ($j\in J$) are expressed as $t_1,\dots,t_k,$ $1+t_1+\cdots+t_k$.
This coordinate change is equivalent to the left multiplication ${\tx}'$ of 
$\tx\la 1,\dots k,k+n+1\ra\tx\la J\ra^{-1}$ to $\tx$. 
Let $L_j'$ be linear forms corresponding to 
the matrix ${\tx}'$. We have 
$$
\int_{\De\la J\ra} 
\Big(\prod_{j=1}^{k+n+1} L_j^{\a_j}\Big) \f\la J'\ra
=\int_{\De'} 
\Big(\prod_{j=1}^{k+n+1} {L'_j}^{\a_j}\Big) \f'\la J'\ra,
$$
where $\f'\la J'\ra$ is naturally defined by ${\tx}'$ and $\De'$ is the 
regularization of a standard simplex 
\begin{equation}
\label{eq:s-simplex}
\De
=\{(t_1,\dots,t_k)\in \R^k\mid t_1,\dots,t_k<0,\; t_1+\cdots+t_k>-1\} 
\end{equation}
with respect to $\prod_{j=1}^{k+n+1} {L'_j}^{\a_j}$.
Here note that every linear form $L_{q}'$ ($q\notin J$) 
has the factor $|\tx\la J\ra|^{-1}$. By taking out this factor from 
this integral, we see that each entry in the first column of $\Pi^\a(x)$ is 
the product of 
$$
\prod_{q\notin J} |\tx\la J\ra|^{-\a_{q}}
%=\prod_{q\in J} |\tx\la J\ra|^{\a_{q}}
=|\tx\la J\ra|^{\a_{J}}
$$ 
and a single-valued holomorphic function on $U_{\dot x}$.
Here note that  we use the assumption (\ref{eq:non-int}).
Hence we have a local expression 
$$\Pi^\a(x)=\Pi^\a_J(x)D^\a_J(x),\quad 
D^\a_J(x)=\diag(|\tx\la J\ra|^{\a_J},1\dots,1)
% \begin{pmatrix}
% |\tx\la J\ra|^{\a_J}& & & \\
% & 1 & &\\
% &  &\ddots &\\
% &  & & 1\\
% \end{pmatrix}
$$
around $\dot x$, where $\Pi^\a_J(x)$ is 
a single-valued holomorphic matrix function on $U_{\dot x}$, 
and $\diag(c_1,\ldots ,c_r)$ denotes the diagonal matrix 
with diagonal entries $c_1,\ldots ,c_r$.
By operating $\dx$ on the both sides of the above and using 
the equalities (\ref{eq:exd-GM}) and (\ref{eq:GM-connection}), we have 
\begin{align*}
&\dx\Pi^\a(x)=\Psi(\a;x) \Pi^\a(x) 
=\dx \Pi^\a_J(x)D^\a_J(x)+\Pi^\a_J(x)\dx D^\a_J(x)\\
=&(\dx \Pi^\a_J(x)D^\a_J(x)+\Pi^\a_J(x)\dx D^\a_J(x))
\cdot (\Pi^\a_J(x)D^\a_J(x))^{-1}\cdot \Pi^\a(x)\\
=&\Big[\dx \Pi^\a_J(x)\Pi^\a_J(x)^{-1}+\Pi^\a_J(x)\diag(\a_J,0,\dots,0)
%x\log(D^\a_J(x))
\Pi^\a_J(x)^{-1}\frac{\dx|\tx\la J\ra|}{|\tx\la J\ra|}
\Big]\Pi^\a(x).
\end{align*}
% where 
% $$\dx\log(D^\a_J(x))=
% \begin{pmatrix}
% \a_J& & & \\
% & 0 & &\\
% &  &\ddots &\\
% &  & & 0\\
% \end{pmatrix}
% \frac{\dx|\tx\la J\ra|}{|\tx\la J\ra|}.
% $$
Since 
$$\Psi(\a;x)=\dx \Pi^\a_J(x)\Pi^\a_J(x)^{-1}+\Pi^\a_J(x)\diag(\a_J,0,\dots,0)
%x\log(D^\a_J(x))
\Pi^\a_J(x)^{-1}\frac{\dx|\tx\la J\ra|}{|\tx\la J\ra|},$$
%$$\Phi(\a;x)=\dx \Pi^\a_J(x)\Pi^\a_J(x)^{-1}
%+\Pi^\a_J(x)\dx\log(D^\a_J(x))\Pi^\a_J(x)^{-1},$$
%and $\Pi^\a_J(x)$ is invertible, 
we have this lemma.
\end{proof}

Hereafter, the matrix $\MJij$ is denoted by simply $\MJa$. 
Let $\cMJ$ be the linear transformation of 
$\Gamma_0(\cH^\a)$ corresponding to the matrix $\MJa$.

\begin{lemma}
\label{lem:0-eigensp}
% Suppose that $\a_J=\a_{j_0}+\cdots +\a_{j_k}\ne 0$ for 
% $J=\{j_0,\dots,j_k\}\in \cJ^\circ$ 
% with $i\notin J$, $k+j\in J$.
Suppose that the same assumption as Lemma \ref{lem:eigenvals}.
\begin{enumerate}
\item 
Let $\f$ and $\f'$ be an element of the eigenspace of $\cMJ$ 
of eigenvalue $\a_J$ and that of eigenvalue $0$, respectively.
Then 
$\f'$ represents an element of the eigenspace of $\cM_J(-\a)$ of eigenvalue $0$, 
and $\f$ and $\f'$ satisfy 
$$\cI(\f,\f')=0.$$
\item 
The eigenspace of $\cMJ$ of eigenvalue $0$ 
is spanned by $\f\la J'\ra$ for $J'\in \under{k+j}{\cJ}{i}
-\{\under{k+j}{J}{i}\}$.   
\item 
The eigenspace of $\cMJ$ of eigenvalue $\a_J$  
is spanned by $\f\la J\ra$.   
\end{enumerate}
\end{lemma}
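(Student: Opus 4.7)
The plan is to make the action of $\cMJ$ fully explicit on the basis
$\{\f\la J' \ra\}_{J' \in \under{k+j}{\cJ}{i}}$
that is used to define $\MJij$ in the decomposition \eqref{eq:psi-ij}; once this computation is in hand, all three parts of the lemma fall out quickly.

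For a basis element $\f\la J'\ra$ with $J' = \{ j'_0, \dots, j'_k\} \in \under{k+j}{\cJ}{i}$, Lemma \ref{lem:na-ij-simple} writes $\naa_{ij}(\f\la J'\ra)$ as a linear combination of $\f\la \under{j'_p}{J'}{k+j}\ra$ for $p = 0, \dots, k$, and converting each coefficient into a logarithmic derivative via Fact \ref{fact:dlog} matches it against one term of \eqref{eq:psi-ij}. The coefficient of $\pa \log|\tx\la J\ra|/\pa x_{ij}$ in $\Psiij(\f\la J'\ra)$ therefore only comes from $p$ with $\under{j'_p}{J'}{k+j} = J$, i.e.\ $(J' - \{ j'_p \}) \cup \{k+j\} = J$. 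Since $J' \ni i$ and $i \notin J$, the omitted index must satisfy $j'_p = i$, and thus the unique basis element on which $\cMJ$ acts nontrivially is $J' = \under{k+j}{J}{i}$, with
$$
\cMJ(\f\la \under{k+j}{J}{i} \ra) = \pm\,\a_{k+j}\, \f\la J\ra,
\qquad
\cMJ(\f\la J'\ra) = 0 \textrm{ for every other } J' \in \under{k+j}{\cJ}{i}.
$$

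Assertion (2) is now immediate: the $r-1$ basis vectors $\f\la J'\ra$ with $J' \in \under{k+j}{\cJ}{i} \setminus \{\under{k+j}{J}{i}\}$ lie in $\ker \cMJ$, and by Lemma \ref{lem:eigenvals}(3) the $0$-eigenspace has dimension exactly $r-1$, so they must span it. For (3), the image of $\cMJ$ is the line $\C(\a)\cdot\f\la J\ra$; writing $\f\la J\ra$ in the basis $\under{k+j}{\cJ}{i}$ and applying $\cMJ$ shows that $\f\la J\ra$ is itself an eigenvector (the coefficient of $\f\la\under{k+j}{J}{i}\ra$ in that expansion cannot vanish, for otherwise $\cMJ^2 = 0$, contradicting the existence of the nonzero eigenvalue $\a_J$), and Lemma \ref{lem:eigenvals} forces its eigenvalue to be $\a_J$.

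For (1), the combinatorial conclusion "$\cMJ(\f\la J'\ra) = 0$ for every $J' \neq \under{k+j}{J}{i}$ in $\under{k+j}{\cJ}{i}$" rests only on index-matching and not on the numerical value of $\a_{k+j}$, so the identical argument run with $-\a$ in place of $\a$ shows that every $\f'$ annihilated by $\cMJ$ is also annihilated by $\cM_J(-\a)$. Finally, $\cI(\f,\f')=0$ follows directly from Fact \ref{fact:intersection}: $\f$ is a multiple of $\f\la J\ra$, while $\f'$ is a combination of $\f\la J'\ra$ with $J' \in \under{k+j}{\cJ}{i}$ and $J' \neq \under{k+j}{J}{i}$; the membership conditions $k+j \in J \setminus J'$ and $i \in J' \setminus J$, combined with $J' \neq \under{k+j}{J}{i}$, force $\#(J \cap J') \leq k-1$, so each pairing vanishes. (Alternatively, one may differentiate the flatness relation \eqref{eq:flat} via Proposition \ref{prop:parallel} to obtain the adjoint identity $\cI(\cMJ \f, \f') = -\cI(\f, \cM_J(-\a) \f')$ and conclude using $\a_J \neq 0$.) The main obstacle is bookkeeping: one must verify that among all $\tilde J \in \cJ^\circ$ with $k+j \in \tilde J$ contributing to \eqref{eq:psi-ij}, only $\tilde J = J$ produces a nonzero contribution to $\cMJ(\f\la J'\ra)$, and only for $J' = \under{k+j}{J}{i}$.
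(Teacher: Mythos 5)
Your proof is correct, and its core computation coincides with the paper's: the index-matching argument showing that the factor $|\tx\la J\ra |$ can only arise from the $p=0$ term, so that $\cMJ$ annihilates every $\f\la J'\ra$ with $J'\in \under{k+j}{\cJ}{i}-\{\under{k+j}{J}{i}\}$ and sends $\f\la \under{k+j}{J}{i}\ra$ to $\pm\a_{k+j}\f\la J\ra$, is exactly how the paper proves part (2), and like the paper you rely on the decomposition (\ref{eq:psi-ij}) with $x$-independent coefficient matrices to read off the action of $\cMJ$. Where you genuinely differ is in the treatment and ordering of (1) and (3). The paper proves (1) first, via the compatibility of the Gauss--Manin connection with the intersection form (Proposition \ref{prop:parallel}) together with the constancy of intersection numbers (\ref{eq:flat}) --- precisely the adjoint identity you mention only as an alternative --- and then deduces (3) from (1), the vanishing $\cI(\f\la J\ra ,\f\la J'\ra )=0$ of Fact \ref{fact:intersection}, and the dimension count of Lemma \ref{lem:eigenvals}. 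You instead prove (3) by a purely linear-algebraic rank argument (the image of $\cMJ$ is the line spanned by $\f\la J\ra$, and the presence of the nonzero eigenvalue $\a_J$ forces that line to be the $\a_J$-eigenspace), and then obtain the orthogonality in (1) directly from Fact \ref{fact:intersection}, checking $\#(J\cap J')\leq k-1$ for the relevant pairs. Your route has the advantage of bypassing Proposition \ref{prop:parallel} altogether, resting only on the explicit matrix action and the combinatorial intersection numbers; it also makes the passage from $\cM_J(\a)\f'=0$ to $\cM_J(-\a)\f'=0$ cleaner than the paper's one-line substitution, since you justify it by exhibiting an $\a$-independent spanning set of the kernel (at the mild cost of making (1) depend on (2) and (3), whereas the paper's flatness argument for (1) is independent of any choice of basis). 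Both arguments are complete at the same level of rigor, including the bookkeeping point you flag about which $\tilde J$ can contribute to the residue along $|\tx\la J\ra |=0$.
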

\begin{proof}
% We fix an element $J=\{ j_0, j_1 ,\ldots ,j_k \}\in \cJ^\circ$ 
% including the letter $k+j(=:j_0)$.

\noindent(1) 
By replacing $\a$ to $-\a$ for $\cM_J(\a)\f'=0$, we have $\cM_J(-\a)\f'=0$. 
Then $\f'$ represents a $0$-eigenvector of $\cM_J(-\a)$. 
Proposition \ref{prop:parallel} together with (\ref{eq:flat}) implies that  
$$0=\dx\cI(\f,\f')=\cI(\na_x^\a \f,\f')+\cI(\f,\na_x^{-\a}\f').$$
Thus we have 
$$0=\cI(\naa_{ij} \f,\f')+\cI(\f,\na_{ij}^{-\a}\f')
=\a_J\cdot  \cI( \f,\f')+0\cdot \cI(\f,\f').
$$
Since $\a_J\ne 0$,  $\cI( \f,\f')$ should be $0$.

\noindent(2) 
The matrix $\MJa=\MJij$ is the coefficient of 
% $\dd\log |\tx\la J\ra |$ 
$\frac{\pa}{\pa x_{ij}}\log |\tx\la J\ra | \dd x_{ij}$ 
by the action 
of $\dd x_{ij}\wedge \naa_{ij}$. 
Let $J'=\{ j_0'=i,j_1',\dots,j_k'\}$ be an element of $_{k+j}\cJ_{i}$. 
It satisfies the assumption of Lemma \ref{lem:na-ij-simple}.
We consider the condition the factor $|\tx\la J\ra|$ appears in 
the denominator of 
$$\naa_{ij} (\f\la J'\ra )=
\a_{k+j}\sum_{p=0}^k 
\frac{|\tx \la \under{j'_p}{J'}{i}\ra|}{|\tx \la \under{j'_p}{J'}{k+j}\ra|}
\f\la \under{j'_p}{J'}{k+j} \ra.$$
This case only happens 
$$p=0;\quad j'_p (=j'_0)=i,\ j'_1=j_1,\ \dots,\ j'_k=j_k.$$
Otherwise, the factor $|\tx\la J\ra|$ never appears 
$\naa_{ij} (\f\la J'\ra )$ for $J'\in \under{k+j}{\cJ}{i}$.
% Thus the dimension of $0$-eigenspace of $\MJij$ is 
% $\binom{k+n}{k}-1$.

\noindent(3) 
By Fact \ref{fact:intersection}, we have 
$$\cI(\f\la J\ra, \f\la J'\ra)=0$$
for  any $J'\in \under{k+j}{\cJ}{i}
-\{\under{k+j}{J}{i}\}$. Lemma \ref{lem:eigenvals} together with 
(1) implies the claim.
\end{proof}

Recall that the index set 
\begin{align*}
  \dot\cJ=\under{k+n+1}{\cJ}{0}&=\{J=\{0,j_1,\dots,j_k\}\mid 1\le j_1<\cdots <j_k\le k+n\} \\
  &=\{ J^1=\dot{J} ,J^2,\ldots ,J^r \}\quad \Big (r=\binom{k+n}{k} \Big)
\end{align*}
of the basis $\{\f\la J\ra \}_{J\in \dot{\cJ}}$ is aligned lexicographically, and  
the column vector $\Phi$
is defined as $\Phi=\tr ( \f\la J^1\ra , \ldots ,\f\la J^r \ra )$. 
Let $C(\a)$ be the intersection matrix of this basis. 
% $\{\f\la J\ra \}_{J\in \dot{\cJ}}$. 
For any $J\in \cJ$, 
let $v_J$ be the row vector defined as 
$$
v_J=\Bigl( \cI(\f\la J\ra, \f\la J^1 \ra),\dots ,\cI(\f\la J\ra, \f\la J^r \ra)\Bigr)
\cdot C(\a)^{-1} .
$$
% where $J'$ runs over $\dot{\cJ}$.
Then $\f\la J\ra$ is expressed as 
$\f\la J\ra=v_J\Phi$, and we have 
$v_J C(\a) \tr v_{J'}^{\vee} =\cI(\f\la J\ra,\f\la J'\ra)$. 
\begin{lemma} Suppose that $\a_J\ne 0$.
  The row vector $v_J$ is a row eigenvector of $\MJij$ 
  with eigenvalue $\a_J$.   
  An arbitrary $0$-eigenvector $v$ of $\MJij$ satisfies 
  $v C(\a) \tr v_J^{\vee}=0$. 
\end{lemma}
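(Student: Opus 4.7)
The plan is to deduce both assertions from the eigenspace structure of $\cMJ$ described in Lemma \ref{lem:0-eigensp}, with part~(1) a direct matrix-language restatement of Lemma \ref{lem:0-eigensp}(3), and part~(2) following from a flatness argument in the style of the proof of Lemma \ref{lem:0-eigensp}(1).

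For part~(1), we observe that under the identification $\Gamma_0(\cH^\a) \ni w\Phi \leftrightarrow w$, the operator $\cMJ$ acts on coordinate row vectors by right multiplication by $\MJij$: from $\cMJ \Phi = \MJij \Phi$ we get $\cMJ(w\Phi) = (w\MJij)\Phi$. Since Lemma \ref{lem:0-eigensp}(3) identifies $\f\la J\ra = v_J \Phi$ as the (up-to-scalar unique) $\a_J$-eigenvector of $\cMJ$, we conclude $v_J \MJij = \a_J v_J$.

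For part~(2), set $\f := v\Phi$, so that $\cMJ \f = 0$. Applying Lemma \ref{lem:0-eigensp}(3) with $\a$ replaced by $-\a$ (valid because $(-\a)_J = -\a_J \ne 0$) gives $\cM_J(-\a)\f\la J\ra = -\a_J\, \f\la J\ra$. Applying Proposition \ref{prop:parallel} together with (\ref{eq:flat}) to $\f \in \Gamma_0(\cH^\a)$ and $\f\la J\ra \in \Gamma_0(\cH^{-\a})$, we obtain
\[
0 = \dx \cI(\f, \f\la J\ra) = \cI(\na_x^\a \f, \f\la J\ra) + \cI(\f, \na_x^{-\a} \f\la J\ra).
\]
Expanding both connection terms via (\ref{eq:decomp-conn}) and (\ref{eq:psi-ij}) and extracting the coefficient of $\frac{\pa}{\pa x_{ij}}\log|\tx\la J\ra|\,\dd x_{ij}$ yields
\[
0 = \cI(\cMJ \f, \f\la J\ra) + \cI(\f, \cM_J(-\a)\f\la J\ra) = 0 + (-\a_J)\cI(\f, \f\la J\ra),
\]
so $\cI(\f, \f\la J\ra) = 0$. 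By the bilinearity identity stated just before the lemma (applied with $v$ and $v_J$ in the roles of $v_J$ and $v_{J'}$), this is exactly the equation $v C(\a)\,\tr v_J^\vee = 0$.

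The main subtle step is the coefficient extraction from the 1-form identity: one must isolate the $J$-term out of a sum $\sum_{J'} \frac{\pa}{\pa x_{ij}}\log|\tx\la J'\ra| \cdot c_{J'}(x)$, which relies on the $\C$-linear independence of the log-derivatives $\frac{\pa}{\pa x_{ij}}\log|\tx\la J'\ra|$ over the relevant index set $\{J'\in\cJ^\circ : k+j\in J',\, i\notin J'\}$ — a fact used implicitly already in the proof of Lemma \ref{lem:0-eigensp}(1).
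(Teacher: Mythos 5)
Your proposal is correct, and part (1) is exactly the paper's route (Lemma \ref{lem:0-eigensp}(3) read in coordinates via $\f\la J\ra = v_J\Phi$). For part (2) you diverge slightly: the paper's one-line proof ("follows from Lemma \ref{lem:0-eigensp} and Fact \ref{fact:intersection}") is most naturally read as: a $0$-eigenvector $v$ gives $v\Phi$ in the span of $\{\f\la J'\ra\}_{J'\in \under{k+j}{\cJ}{i}-\{\under{k+j}{J}{i}\}}$ by Lemma \ref{lem:0-eigensp}(2), and for each such $J'$ one has $\#(J\cap J')\le k-1$, so $\cI(\f\la J'\ra,\f\la J\ra)=0$ by Fact \ref{fact:intersection}; hence $vC(\a)\tr v_J^{\vee}=\cI(v\Phi,\f\la J\ra)=0$ by bilinearity. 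This is purely algebraic and needs no analytic input. You instead rerun the compatibility argument of Proposition \ref{prop:parallel} with (\ref{eq:flat}), applying Lemma \ref{lem:0-eigensp}(3) at $-\a$ to get $\cM_J(-\a)\f\la J\ra=-\a_J\f\la J\ra$ and then extracting the coefficient of $\dx\log|\tx\la J\ra|$; in effect you inline a mirrored version of the proof of Lemma \ref{lem:0-eigensp}(1). What this buys you is that you never need the explicit spanning set of the $0$-eigenspace, and by putting the $0$-eigenvector in the first slot of $\cI$ you land directly on $vC(\a)\tr v_J^{\vee}$ without any $\vee$/transposition gymnastics; the cost is that you must invoke the $\C$-linear independence of the logarithmic forms $\dx\log|\tx\la J'\ra|$ ($J'\in\cJ^\circ$) to isolate the $J$-term. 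You flag this honestly, and it is indeed the same step the paper leaves implicit in its proof of Lemma \ref{lem:0-eigensp}(1) (equivalently, one can take the residue of the $1$-form identity along the divisor $|\tx\la J\ra|=0$), so it is not a gap — but note that the Fact \ref{fact:intersection} route avoids the issue entirely.
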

\begin{proof}
  The lemma follows from Lemma \ref{lem:0-eigensp}
%, \ref{lem:eigen}, 
and 
  Fact \ref{fact:intersection}. 
\end{proof}
By this lemma, we obtain the following. 
\begin{proposition}
  \label{prop:MJ-expression}
  The matrix $\MJa$ is expressed as 
  \begin{align}
    \label{eq:psi-expression}
%    \Big(\sum_{p\in J}\a_p\Big) 
\a_J
C(\a)\tr v_J^{\vee} v_J(v_J C(\a) \tr v_J^\vee)^{-1}=
    \frac{\prod_{p\in J}\a_p}{(2\pi\sqrt{-1})^k}
    C(\a)\tr v_J^{\vee} v_J.
  \end{align}
%  In particular, $\MJij$ is independent of $i$ and $j$. 
\end{proposition}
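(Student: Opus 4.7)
The plan is to identify $\MJa$ with the rank-one matrix appearing on the right of \eqref{eq:psi-expression} by matching their eigenstructures, and then to evaluate the normalizing scalar using the self-intersection formula from Fact \ref{fact:intersection}.

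First I would set
$$A := \frac{C(\a)\, \tr v_J^\vee\, v_J}{v_J\, C(\a)\, \tr v_J^\vee},$$
observing that the denominator equals the self-intersection $\cI(\f\la J\ra,\f\la J\ra)$, which is nonzero by Fact \ref{fact:intersection} since $\a_J\ne 0$. The matrix $A$ has rank one, and a direct computation gives, for any row vector $v$,
$$vA = \frac{v\, C(\a)\, \tr v_J^\vee}{v_J\, C(\a)\, \tr v_J^\vee}\, v_J.$$
Consequently $v_J$ is a row eigenvector of $A$ with eigenvalue $1$, while every $v$ in the hyperplane $H := \{v : v\, C(\a)\, \tr v_J^\vee = 0\}$ is a $0$-eigenvector; since $C(\a)\,\tr v_J^\vee \ne 0$, this hyperplane has dimension $r-1$.

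Next I would compare the eigenstructures of $\MJa$ and $\a_J A$. By the preceding Lemma, $v_J$ is a row eigenvector of $\MJa$ with eigenvalue $\a_J$, and every $0$-eigenvector of $\MJa$ lies in $H$. Combined with Lemma \ref{lem:eigenvals}(3), which asserts that the $0$-eigenspace of $\MJa$ has dimension $r-1$, this forces the $0$-eigenspace to coincide with $H$. Since the row space $\C(\a)^r$ decomposes as the direct sum of the line $\C(\a)\cdot v_J$ and $H$, the matrices $\MJa$ and $\a_J A$ act identically on a spanning set and are therefore equal. This establishes the first equality of \eqref{eq:psi-expression}.

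Finally, to reach the second form I would substitute the explicit value
$$v_J\, C(\a)\, \tr v_J^\vee = \cI(\f\la J\ra, \f\la J\ra) = (2\pi\sqrt{-1})^k\, \frac{\a_J}{\prod_{p\in J}\a_p}$$
from Fact \ref{fact:intersection} into $\a_J A$; the factor $\a_J$ cancels and the claimed expression falls out. The main obstacle is the dimension-matching argument that forces the $0$-eigenspace of $\MJa$ to equal $H$ (rather than merely be contained in it), which rests on combining the inclusion from the preceding Lemma with the dimension count in Lemma \ref{lem:eigenvals}(3); once this identification is in place, the remainder is bookkeeping with row/column conventions.
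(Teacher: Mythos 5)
Your proposal is correct and follows essentially the same route as the paper: it uses the preceding lemma (that $v_J$ is a row eigenvector of $\MJa$ with eigenvalue $\a_J$ and that every $0$-eigenvector $v$ satisfies $v\,C(\a)\,\tr v_J^{\vee}=0$) together with the dimension count in Lemma \ref{lem:eigenvals} to match the eigenstructure of $\MJa$ with that of the rank-one matrix, and then cancels $\a_J$ against the self-intersection number $v_J C(\a)\tr v_J^{\vee}=(2\pi\sqrt{-1})^k\a_J/\prod_{p\in J}\a_p$ from Fact \ref{fact:intersection}; you merely spell out the hyperplane and direct-sum bookkeeping that the paper leaves implicit. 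The only detail the paper adds is the closing remark that, after the cancellation, the resulting expression remains valid even when $\a_J=0$, which in your setting over $\C(\a)$ (with the $\a_i$ treated as indeterminants) is automatic.
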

\begin{proof}
We suppose temporarily $\a_J\ne0$. Lemma \ref{lem:0-eigensp} yields that 
the row vector $v_J$ is a row eigenvector of $\MJij$ 
with eigenvalue $\a_J$, and that    
an arbitrary $0$-eigenvector $v$ of $\MJij$ satisfies 
 $v C(\a) \tr v_J^{\vee}=0$.

It is easy to see that the eigenspaces of $\MJa$ coincide with 
those of the left hand side of (\ref{eq:psi-expression}).
% under the assumption $\sum_{p\in J} \a_p\ne 0$. 
Since the factor $\a_J$ is canceled with 
$v_J C(\a) \tr v_J^\vee$, we have the identity (\ref{eq:psi-expression}),  
which is valid even in the case $\a_J=0$. 
\end{proof}
\begin{remark}
  If we write $C(\a)$ as the form $(2\pi\sqrt{-1})^k C'(\a)$, then we can cancel
  the factor $(2\pi\sqrt{-1})^k$ in the right-hand side of (\ref{eq:psi-expression}). 
\end{remark}
%Since $\MJij$ is independent of $i$ and $j$, we denote it by $\MJa$. 
\begin{theorem}
  \label{th:Psi-expression}
  The connection matrix $\Psiax$ is expressed as 
  \begin{align}
    \label{eq:pfaffian-expression}
    \Psiax=\sum_{J\in \cJ^\circ} \MJa \dx \log |\tx\la J\ra | ,
  \end{align}
where the explicit form of $\MJa$ is given as 
(\ref{eq:psi-expression}) in Proposition \ref{prop:MJ-expression}.
\end{theorem}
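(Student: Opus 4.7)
The plan is to assemble results already established in the paper. First, from the decomposition $\nabla_x^\a = \sum_{i,j} \dd x_{ij}\wedge \naa_{ij}$ in (\ref{eq:decomp-conn}) and the definition of the connection matrix $\Psiax$ via $\na_x^\a \Phi = \Psiax\cdot\Phi$, we write $\Psiax = \sum_{i,j} \Psiij\, \dd x_{ij}$. Substituting the expansion (\ref{eq:psi-ij}) gives
\begin{equation*}
\Psiax = \sum_{\substack{1\le i\le k\\ 1\le j\le n}} \Big(\sum_{J\in \cJ^\circ,\; J\ni k+j} M_J^{ij}(\a)\,\frac{\pa}{\pa x_{ij}}\log|\tx\la J\ra|\Big)\,\dd x_{ij}.
\end{equation*}

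Next, I would use Lemma \ref{lem:iJj+k} to observe that every $J\in \cJ^\circ$ is automatically disjoint from $\{1,\dots,k\}$, so the condition ``$i\notin J$'' in the hypothesis of Lemma \ref{lem:eigenvals} is already built into the index set. Therefore Lemma \ref{lem:eigenvals}(1) applies and the matrix $M_J^{ij}(\a)$ depends neither on $i$ nor on $j$; we may replace it throughout by $\MJa$ and pull it out of the $(i,j)$-summation. Interchanging the order of summation yields
\begin{equation*}
\Psiax = \sum_{J\in \cJ^\circ} \MJa \Big(\sum_{\substack{1\le i\le k,\, 1\le j\le n\\ k+j\in J}} \frac{\pa}{\pa x_{ij}}\log|\tx\la J\ra|\,\dd x_{ij}\Big).
\end{equation*}

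To finish, I would note that $x_{ij}$ occurs in $\tx$ only in the $(k+j)$-th column, so $\frac{\pa}{\pa x_{ij}}\log|\tx\la J\ra|=0$ whenever $k+j\notin J$. This lets me drop the restriction ``$k+j\in J$'' and recognise the inner parenthesis as $\dx \log|\tx\la J\ra|$, giving formula (\ref{eq:pfaffian-expression}). The explicit form of $\MJa$ in (\ref{eq:psi-expression}) is exactly Proposition \ref{prop:MJ-expression}, which has already been proved.

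There is no substantial obstacle left: the hard analytic work (identifying the eigenvalues of $M_J^{ij}(\a)$, proving independence of the indices $i,j$, and isolating the eigen-rank-one structure in terms of intersection data) was carried out in Lemma \ref{lem:eigenvals}, Lemma \ref{lem:0-eigensp}, and Proposition \ref{prop:MJ-expression}. The one small point that deserves explicit mention is the vanishing of the $(k+j)\notin J$ terms used to extend the inner summation; this is what makes the reorganisation of double summations coincide with the exterior derivative $\dx$.
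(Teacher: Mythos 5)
Your proposal is correct and follows essentially the same route as the paper: expand $\Psiax$ via (\ref{eq:decomp-conn}) and (\ref{eq:psi-ij}), use the $i,j$-independence of $\MJij$ from Lemma \ref{lem:eigenvals} to pull out $\MJa$, and reassemble the inner sum as $\dx\log|\tx\la J\ra|$ since the derivatives with $k+j\notin J$ vanish. Your explicit remarks on Lemma \ref{lem:iJj+k} and on the vanishing terms merely spell out what the paper's two-line computation leaves implicit.
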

\begin{proof}
  We have
  \begin{align*}
    \Psiax
    &=\sum_{J\in \cJ^\circ} \Bigl( \MJa 
    \sum_{i=1}^k \sum_{j}^{k+j\in J} \frac{\pa}{\pa x_{ij}} \log |\tx\la J\ra | \dd x_{ij} \Bigr) \\
    &=\sum_{J\in \cJ^\circ} \MJa \dx \log |\tx\la J\ra | , 
  \end{align*}
  by (\ref{eq:psi-ij}). 
\end{proof}
We can express the connection $\naa_x$ by the intersection form. 
This expression is independent of choice of 
a frame of $\Gamma_0(\cH^\a)$.
%a basis of $H^k (\Omd (T_x),\na^{\a})$. 
\begin{theorem}
\label{th:main} For any element $\f\in \Gamma_0(\cH^\a)$, we have 
\begin{eqnarray*}
\naa_x(\f)&=&
\sum_{J\in \cJ^\circ} 
%\Big(\sum_{j\in J}\a_j\Big)
\a_J\frac{\cI(\f,\f\la J\ra)}{\cI(\f\la J\ra,\f\la J\ra)}\f \la J \ra
\dx \log |\tx\la J\ra | \\
&=&\frac{1}{(2\pi\sqrt{-1})^k}
\sum_{J\in \cJ^\circ} 
\Big(\prod_{j\in J}\a_{j}\Big)\cI(\f,\f\la J\ra)\f \la J \ra
\dx \log |\tx\la J\ra |.
\end{eqnarray*}
\end{theorem}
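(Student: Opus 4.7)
The plan is to derive the theorem as a direct combination of Theorem \ref{th:Psi-expression} and Proposition \ref{prop:MJ-expression}, after expressing $\f$ in the frame $\Phi$ and recognizing the resulting scalars as intersection numbers.

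First I would write $\f = v\Phi$ with $v$ a row vector of size $r$ whose entries lie in $\C(\a)$ and do not depend on $x$. Since $\naa_x$ is a connection and $v$ is constant in $x$, the Leibniz rule yields $\naa_x \f = v\,\Psiax\,\Phi$. Applying Theorem \ref{th:Psi-expression} gives
\begin{equation*}
\naa_x \f = \sum_{J \in \cJ^\circ} \bigl( v\,\MJa\,\Phi \bigr)\, \dx \log |\tx\la J\ra|,
\end{equation*}
so the task reduces to rewriting the scalar $v\,\MJa\,\Phi$ in terms of the intersection form.

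Next I would substitute the closed form $\MJa = \a_J\,C(\a)\,\tr v_J^{\vee}\,v_J\,(v_J\,C(\a)\,\tr v_J^{\vee})^{-1}$ supplied by Proposition \ref{prop:MJ-expression}, obtaining
\begin{equation*}
v\,\MJa\,\Phi = \a_J\, \frac{\bigl( v\,C(\a)\,\tr v_J^{\vee} \bigr)\bigl( v_J\,\Phi \bigr)}{v_J\,C(\a)\,\tr v_J^{\vee}}.
\end{equation*}
By the definition of $v_J$ we have $v_J\,\Phi = \f\la J\ra$; by $\C(\a)$-bilinearity of the intersection form together with the identity $v_J\,C(\a)\,\tr v_{J'}^{\vee} = \cI(\f\la J\ra, \f\la J'\ra)$ recorded just before the theorem, one finds $v\,C(\a)\,\tr v_J^{\vee} = \cI(\f, \f\la J\ra)$ and $v_J\,C(\a)\,\tr v_J^{\vee} = \cI(\f\la J\ra, \f\la J\ra)$. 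Substituting these identifications yields the first equality of the theorem.

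The second equality follows either by starting instead from the alternate form $\MJa = \dfrac{\prod_{p\in J}\a_p}{(2\pi\sqrt{-1})^k}\,C(\a)\,\tr v_J^{\vee}\,v_J$ in Proposition \ref{prop:MJ-expression}, or by simplifying $\a_J/\cI(\f\la J\ra, \f\la J\ra)$ directly via Fact \ref{fact:intersection}, which gives $\cI(\f\la J\ra, \f\la J\ra) = (2\pi\sqrt{-1})^k\,\a_J/\prod_{j\in J}\a_j$. There is no real obstacle here: the theorem is essentially a repackaging of the preceding frame-dependent results into a frame-independent expression, and the only care required is tracking the $^{\vee}$ convention when passing between coordinate vectors and intersection numbers.
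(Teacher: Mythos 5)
Your argument is correct, and it reaches the theorem by a slightly different route than the paper. Both proofs start from Theorem \ref{th:Psi-expression}, which reduces everything to identifying the coefficient of $\dx\log|\tx\la J\ra|$; but where you substitute the closed formula for $\MJa$ from Proposition \ref{prop:MJ-expression} and mechanically convert the matrix products into intersection numbers via $v_J\Phi=\f\la J\ra$ and $v\,C(\a)\,\tr v_J^{\vee}=\cI(\f,\f\la J\ra)$, the paper never invokes that formula here: it instead introduces the candidate transformation $\f\mapsto \a_J\,\cI(\f,\f\la J\ra)\,\cI(\f\la J\ra,\f\la J\ra)^{-1}\f\la J\ra$ and shows it coincides with $\cMJ$ by comparing eigenspaces through Lemma \ref{lem:0-eigensp}, i.e.\ it repeats at the level of linear maps the eigenspace argument that underlies Proposition \ref{prop:MJ-expression}. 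Your version buys a purely computational, frame-coordinate verification with no further eigenspace reasoning, at the cost of careful $^{\vee}$ bookkeeping; the paper's version is frame-free from the start. Two small points you should make explicit: (i) the coordinate vector $v$ of a general $\f\in\Gamma_0(\cH^\a)$ is indeed independent of $x$, because $\Gamma_0(\cH^\a)$ is spanned over $\C(\a)$ by the $\f\la J\ra$, $J\in\cJ$, whose coordinates $v_J$ are built from intersection numbers that by Fact \ref{fact:intersection} involve only $\a$ (this is also the content of (\ref{eq:flat})), so the Leibniz step $\naa_x(v\Phi)=v\,\Psiax\,\Phi$ is justified; and (ii) as the paper notes, the factor $\a_J$ cancels against $\cI(\f\la J\ra,\f\la J\ra)=(2\pi\sqrt{-1})^k\a_J/\prod_{j\in J}\a_j$, so the expression, read as a rational function of $\a$, remains valid when $\a_J=0$ — your appeal to the second form of Proposition \ref{prop:MJ-expression} handles exactly this.
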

\begin{proof}
  Theorem \ref{th:Psi-expression} implies that $\naa_x$ can be 
  expressed as a linear combination of $\dx \log |\tx\la J\ra |$. 
%  By orthogonality of the eigenspaces of $\MJa$ with respect to $\cI$, 
%  we obtain the theorem. 
We consider the linear transformation 
$$
\Gamma_0(\cH^\a)\ni 
\f\mapsto
%\sum_{J\in \cJ^\circ} 
%\Big(\sum_{j\in J}\a_j\Big)
\a_J\frac{\cI(\f,\f\la J\ra)}{\cI(\f\la J\ra,\f\la J\ra)}\f \la J \ra
%=\frac{\Big(\prod_{j\in J}\a_{j}\Big)}{(2\pi\sqrt{-1})^k}
%\cI(\f,\f\la J\ra)\f \la J \ra
\in \Gamma_0(\cH^\a).
$$
By comparing the eigenspaces of this transformation 
with those of $\cMJ$ given in Lemma \ref{lem:0-eigensp}, 
we conclude that it coincides with $\cMJ$ under the condition $\a_J\ne0$. 
Note that the factor $\a_J$ is canceled with $\cI(\f\la J\ra,\f\la J\ra)$. 
%their ratio is equal to $\dfrac{\prod_{j\in J}\a_{j}}{(2\pi\sqrt{-1})^k}.$
Thus the connection $\naa_x$ admits the expressions.
% Since 
% $$\cI(\f,\f\la J\ra)\f \la J \ra=\left\{
% \begin{matrix}
% \f \la J \ra & \textrm{if} &\f \la J \ra
% 0 & \textrm{if} &\f \la J \ra
% \end{matrix}
% \right.
% $$
\end{proof}

%%%%%%%%%%%%%%%%%%%%%%%%%%%%%%%%%%%%%%%%%%%%%%%%%%%%%%%%%%%%%%%%%%%%%%%%%%%%%%
\section{Pfaffian equations}
\label{sec:pfaffian}
A Pfaffian equation of $F(x)$ means a first order 
linear differential equation for a vector-valued unknown function including 
$F(x)$ which is integrable and equivalent to a holonomic system of 
linear differential equations annihilating the single unknown function $F(x)$.

Via the equality (\ref{eq:exd-GM}), 
we can regard the equation (\ref{eq:GM-connection}) 
as a first order differential equation for a vector-valued 
function of rank $r=\binom{k+n}{k}$. It satisfies the 
integrability condition  
$$\dx \Psi(\a;x)=\Psi(\a;x)\wedge \Psi(\a;x).$$
Thus for any point $x\in X$, 
there exists a unique solution to this differential equation 
around $x$ under an initial condition. 
However, we cannot immediately regard it as a Pfaffian equation of 
$$
\Fax=\int_\square\prod_{j=1}^{n+k+1} L_j^{\a_j} \f\la \dot J\ra,  
$$
where $\dot J=\{0,1,\dots,k\}$ and $\square$ is a twisted cycle. 
To obtain differential equations annihilating $\Fax$ from it, 
we need to express 
$$\int_\square\prod_{j=1}^{n+k+1} L_j^{\a_j} \f\la J\ra \quad (J\in 
\under{k+n+1}{\cJ}{0})
$$
by actions of 
the ring of differential operators with rational function coefficients
% the extended Weyl algebra 
$\C(\dots,x_{ij},\dots)\la \dots,\dfrac{\pa}{\pa x_{ij}},\dots\ra$ 
on $\Fax$. 
In this section, by differentiating $\Fax$ several times, 
we find a vector-valued function $\bFax$ such that 
it satisfies a Pfaffian equation with the connection matrix $\Psi(\a,;x)$.

By the equality of (\ref{eq:pd-GMpd}), we can translate computations of  
$\dfrac{\pa}{\pa x_{ij}}\Fax$ to those of $\naa_{ij}(\f\la \dot J\ra)$. 
Firstly, we express $\naa_{ij}(\f\la \dot J\ra)$ in terms of $\f\la J\ra$.
Since $(\pa/\pa x_{ij})( \f\la \dot J\ra)=0$, 
we have 
$$\naa_{ij}(\f\la \dot J\ra)=
\frac{\a_{k+j}\dd t}{L_{k+j}t_1\cdots t_{i-1}t_{i+1}\cdots t_{k}}
=\frac{\a_{k+j}}{|\tx \la \under{i}{\dot J}{k+j}\ra|}\f\la \under{i}{\dot J}{k+j}\ra,
$$
where 
$$_i\dot J_{k+j}=(\dot J-\{i\})\cup \{k+j\}=
\{0,1,\dots,i-1,k+j,i+1,\dots,k\}.$$
By these operators, we obtain $k\times n$ functions.

Secondly, we express $(\naa_{i'j'}\circ \naa_{ij})(\f\la \dot J\ra)$ 
in terms of 
$\f\la J\ra$, 
where $i\ne i'$ and $j\ne j'$.  
%Note that $$\na_{i'j'}\circ \na_{ij}=\na_{ij}\circ \na_{i'j'}.$$
We have 
\begin{eqnarray*}
& &(\naa_{i'j'}\circ \naa_{ij})(\f\la \dot J\ra)\\
&=&\naa_{i'j'}\left(
\frac{\a_{k+j}\dd t}{L_{k+j}t_1\cdots t_{i-1}t_{i+1}\cdots t_{k}}\right)\\
&=&\frac{\a_{k+j'}\a_{k+j}\dd t}{L_{k+j'}L_{k+j}
t_1\cdots t_{i-1}t_{i+1}\cdots t_{i'-1}t_{i'+1}\cdots t_{k}}\\
&=&\frac{\a_{k+j}\a_{k+j'}}{|\tx\la \under{i',i}{\dot J}{k+j,k+j'}\ra|}
\f\la\under{i',i}{\dot J}{k+j,k+j'}\ra,
\end{eqnarray*}
where $_{i',i}\dot J_{k+j,k+j'}=(\dot J-\{i,i'\})\cup\{k+j,k+j'\}$. 
Since 
$$\naa_{i'j}\circ \naa_{ij'}=\naa_{ij'}\circ \naa_{i'j}=\naa_{ij}\circ \naa_{i'j'}
=\naa_{i'j'}\circ \naa_{ij},$$
we obtain 
$
\binom{k}{2}\times \binom{n}{2}
$
functions, which is the number of the way to choose $i_1,i_2,j_1,j_2$ such that 
$1\le i_1<i_2\le k,\ 1\le j_1<j_2\le n$. 

Thirdly, we act $\naa_{i_3j_3}\circ\naa_{i_2j_2}\circ\naa_{i_1j_1}$ on 
$\f\la \dot J\ra$, where 
$$\{i_1,i_2,i_3\}\subset\{1,\dots,k\},\quad 
\{j_1,j_2,j_3\}\subset\{1,\dots,n\},
$$
and their cardinalities are $3$.
We have 
\begin{eqnarray*}
& &(\naa_{i_3j_3}\circ \naa_{i_2j_2}\circ \naa_{i_1j_1})(\f\la \dot J\ra)\\
&=&\frac{\a_{k+j_1}\a_{k+j_2}\a_{k+j_3}t_{i_1}t_{i_2}t_{i_3}\dd t}
{L_{k+j_1}L_{k+j_2}L_{k+j_3}
t_1\cdots t_{k}}\\
&=&\frac{\a_{k+j_1}\a_{k+j_2}\a_{k+j_3}}
{|\tx\la \under{i_3,i_2,i_1}{\dot J}{k+j_1,k+j_2,k+j_3}\ra|}
\f\la \under{i_3,i_2,i_1}{\dot J}{k+j_1,k+j_2,k+j_3}\ra,
\end{eqnarray*}
where 
$_{i_3,i_2,i_1}\dot J_{k+j_1,k+j_2,k+j_3}=
(\dot J-\{i_1,i_2,i_3\})\cup\{k+j_1,k+j_2,k+j_3\}$. 
By these operators, we obtain 
$
\binom{k}{3}\times \binom{n}{3}
$
functions, which is the number of 
the way to choose $i_1,i_2,i_3,j_1,j_2,j_3$ such that 
$1\le i_1<i_2<i_3\le k,\ 1\le j_1<j_2<j_3\le n$. 

Generally, we have 
\begin{align}
  \label{eq:nabla}
  (\naa_{i_l j_l}\circ \cdots \circ \naa_{i_1j_1})(\f\la \dot J\ra)
  =\frac{\a_{k+j_1}\cdots \a_{k+j_l}}
  {|\tx\la \under{i_l,\dots,i_1}{\dot J}{k+j_1,\dots,k+j_l}\ra|}
  \f\la \under{i_l,\dots,i_1}{\dot J}{k+j_1,\dots,k+j_l}\ra,
\end{align}
where 
$_{i_l,\dots,i_1}\dot J_{k+j_1,\dots,k+j_l}=
(\dot J-\{i_1,\dots,i_l\})\cup\{k+j_1,\dots,k+j_l\}$. 
Note that 
$$0\in \under{i_l,\dots,i_1}{\dot J}{k+j_1,\dots,k+j_l},\quad 
k+n+1\notin \under{i_l,\dots,i_1}{\dot J}{k+j_1,\dots,k+j_l}.$$
In this way, we have 
$$\sum_{i=0}^k 
\binom{k}{i}\times \binom{n}{i}=\binom{k+n}{k}
$$
functions. 
The set of $_{i_l,\dots,i_1}\dot J_{k+j_1,\dots,k+j_l}$'s coincides with the set 
$$\dot \cJ=
\under{k+n+1}{\cJ}{0}=\{J=\{0,j_1,\dots,j_k\}\mid 1\le j_1<\cdots <j_k\le k+n\}.$$
% By aligning them lexicographically, we construct a vector valued 
% function 
% \begin{align*}
%   \bF(\a; x)&=
%   \begin{pmatrix}
%     \int_{\De} \prod_{j=1}^{k+n+1} L_j^{\a _j} \f \la \dot{J} \ra \\
%     \vdots \\ 
%     \int_{\De} \prod_{j=1}^{k+n+1} L_j^{\a _j}\f \la \under{i_r ,\cdots ,i_1}{\dot{J}}{k+j_1 ,\ldots ,k+j_r} \ra \\
%     \vdots
%   \end{pmatrix}  
%   \\ &=
%   \begin{pmatrix}
%     F(\a;x) \\
%     \vdots \\ 
%     \dfrac{|\tx \la \under{i_r ,\cdots ,i_1}{\dot{J}}{k+j_1 ,\ldots ,k+j_r} \ra |}
%     {\prod_{s=1}^r \a_{k+j_s}}
%     \cdot \dfrac{\pa^r F(\a ;x)}{\pa x_{i_1 j_1}\cdots \pa x_{i_r j_r}} \\
%     \vdots
%   \end{pmatrix}.
% \end{align*}
Recall that they are aligned lexicographically
$\dot{\cJ} =\{ J^1=\dot{J} ,J^2,\ldots ,J^r \}$, $r=\binom{k+n}{k}$. 
Recall also that 
$J^p$ is expressed as $\{ 0,j_1 \ldots ,j_k \}$  
with $1\le j_1<\cdots <j_k\le n+k$. 
Note that if $J^p =\under{i_l,\dots,i_1}{\dot J}{k+j_1,\dots,k+j_l}$ as sets, 
we have
$$
\f \la J^p \ra 
= {\rm sgn}\binom{J^p}{\under{i_l,\dots,i_1}{\dot J}{k+j_1,\dots,k+j_l}}
\cdot \f \la \under{i_l,\dots,i_1}{\dot J}{k+j_1,\dots,k+j_l} \ra .
$$
For example, if $k=2$, we have 
$\f \la 023 \ra =-\f \la 032 \ra =-\f \la \under{1}{\dot{J}}{3} \ra $.

We define 
%a column vector $\Phi =\tr ( \f\la J^1\ra , \f\la J^2 \ra ,\ldots ,\f\la J^r \ra )$ and 
a vector-valued function 
% $\tilde{\bF}(\a;x)$ and 
$\bF(\a; x)$ by 
\begin{align*}
  \bF(\a; x)
  % &=
  % \begin{pmatrix}
  %   F(\a;x) \\
  %   \vdots \\ 
  %   \pm \dfrac{|\tx \la \under{i_l ,\cdots ,i_1}{\dot{J}}{k+j_1 ,\ldots ,k+j_l} \ra |}
  %   {\prod_{s=1}^l \a_{k+j_s}}
  %   \cdot \dfrac{\pa^l F(\a ;x)}{\pa x_{i_1 j_1}\cdots \pa x_{i_l j_l}} \\
  %   \vdots
  % \end{pmatrix}\\
  % &  
  = \begin{pmatrix}
     \int_{\square} \prod_{j=1}^{k+n+1} L_j^{\a _j} \f \la \dot{J} \ra \\
     \int_{\square} \prod_{j=1}^{k+n+1} L_j^{\a _j}\f \la J^2 \ra \\
     \vdots \\ 
     \int_{\square} \prod_{j=1}^{k+n+1} L_j^{\a _j}\f \la J^r \ra 
   \end{pmatrix}
= \int_{\square} \prod_{j=1}^{k+n+1} L_j^{\a _j} \cdot \Phi .  
\end{align*}
% \begin{align*}
%   \tilde{\bF}(\a;x) &= \tr \Bigl( 
%   F(\a;x) , \dfrac{\pa F(\a ;x)}{\pa x_{k1}} ,\ldots ,
%   \dfrac{\pa^l F(\a ;x)}{\pa x_{i_1 j_1}\cdots \pa x_{i_l j_l}} ,\ldots 
%   \Bigr),\\
% %
% \bFax&=G(\a;x)\tilde{\bF}(\a;x),
% \end{align*}
By (\ref{eq:pd-GMpd}) and (\ref{eq:nabla}), 
it is expressed as 
$$\bFax=G(\a;x)\tilde{\bF}(\a;x),$$ 
where 
\begin{align*}
  \tilde{\bF}(\a;x) &= \tr \Bigl( 
  F(\a;x) , \dfrac{\pa F(\a ;x)}{\pa x_{k1}} ,\ldots ,
  \dfrac{\pa^l F(\a ;x)}{\pa x_{i_1 j_1}\cdots \pa x_{i_l j_l}} ,\ldots 
  \Bigr) ,\\
  G(\a;x)&=\diag\left( 1, 
  \dfrac{x_{k1}}{\a_{k+1}}, 
  \ldots ,
  \pm \dfrac{|\tx \la \under{i_l ,\cdots ,i_1}{\dot{J}}{k+j_1 ,\ldots ,k+j_l} \ra |}
  {\prod_{s=1}^l \a_{k+j_s}} ,\ldots  \right). 
\end{align*}
% $$G(\a;x)=\begin{pmatrix}
% 1 & & & &\\ 
%   &\dfrac{x_{k1}}{\a_{k+1}}& & &\\ 
%   & &\ddots & &\\ 
%   & & &\pm \dfrac{|\tx \la \under{i_l ,\cdots ,i_1}{\dot{J}}
% {k+j_1 ,\ldots ,k+j_l} \ra |}{\prod_{s=1}^l \a_{k+j_s}} &\\ 
%   & & & &\ddots
% \end{pmatrix} 
% \dfrac{x_{k1}}{\a_{k+1}}, 
%     \ldots ,
%     \pm \dfrac{|\tx \la \under{i_l ,\cdots ,i_1}{\dot{J}}{k+j_1 ,\ldots ,k+j_l} \ra |}
%     {\prod_{s=1}^l \a_{k+j_s}} ,\ldots  \right)
Note that $G(\a;x)$ belongs to $GL(r;\W^0(X)).$
% \begin{align*}
%   G(\a;x)&=\diag\left( 1, 
%     \dfrac{x_{k1}}{\a_{k+1}}, 
%     \ldots ,
%     \pm \dfrac{|\tx \la \under{i_l ,\cdots ,i_1}{\dot{J}}{k+j_1 ,\ldots ,k+j_l} \ra |}
%     {\prod_{s=1}^l \a_{k+j_s}} ,\ldots  \right) \in GL(r;\W^0(X)), \\
%   \tilde{\bF}(\a;x) &= \tr \Bigl( 
%   F(\a;x) , \dfrac{\pa F(\a ;x)}{\pa x_{k1}} ,\ldots ,
%   \dfrac{\pa^l F(\a ;x)}{\pa x_{i_1 j_1}\cdots \pa x_{i_l j_l}} ,\ldots 
%   \Bigr) .
% \end{align*}
% Since $\{\f\la J\ra\}_{J\in \dot{\cJ}}$ is a basis of $H^k(\W^\bu(T_x),\na^{\a})$, 
% the vector valued functions $\bFax$ and $\tilde{\bF}(\a;x)$ 
% are c-basis vectors of $\Fax$.
We call 
the vector-valued function $\bFax$  
(resp. $\tilde{\bF}(\a;x)$) 
the {\it Gauss-Manin vector} of $\Fax$ with respect to 
the frame $\{\f\la J\ra\}_{J\in \dot{\cJ}}$ 
(resp. $G(\a;x)^{-1}\{\f\la J\ra\}_{J\in \dot{\cJ}}$), 
or shortly the {\it G-M vector} of $\Fax$. 
Here, $G(\a;x)^{-1}\{\f\la J\ra\}_{J\in \dot{\cJ}}$ means 
the frame corresponding to the vector $G(\a;x)^{-1}\Phi$.
\begin{theorem}
\label{th:Pfaffian}
The G-M vector $\bFax$ of $\Fax$ satisfies the Pfaffian equation 
$$\dx \bFax=\Psi(\a;x) \bFax$$
with the same connection matrix as 
%that of the Gauss-Manin connection $\na_x$ given in Theorem \ref{th:Psi-expression}.
in (\ref{eq:pfaffian-expression}).
\end{theorem}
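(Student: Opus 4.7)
The plan is to derive this Pfaffian equation as an almost immediate consequence of two results already in hand: the translation formula (\ref{eq:exd-GM}) relating $\dx$ on an integral of the form $\int_\square \prod L_j^{\a_j}\f$ to the action of $\na_x^\a$ under the integral sign, and the Gauss-Manin connection identity $\na_x^\a \Phi = \Psi(\a;x)\cdot \Phi$ of (\ref{eq:GM-connection}). Once these are combined, the matrix equation should drop out with no further computation.

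First, I would apply (\ref{eq:exd-GM}) componentwise to the defining expression
\[
\bFax = \int_{\square} \prod_{j=1}^{k+n+1} L_j^{\a_j} \cdot \Phi ,
\]
interpreting the integral of a column vector as the column of integrals of its entries. This directly yields
\[
\dx \bFax = \int_{\square} \prod_{j=1}^{k+n+1} L_j^{\a_j} \cdot \na_x^\a (\Phi),
\]
where $\na_x^\a$ is applied componentwise on $\Phi$.

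Next, I would substitute the Gauss-Manin relation $\na_x^\a \Phi = \Psi(\a;x)\,\Phi$. Since the entries of $\Psi(\a;x)$ lie in $\W^1(X)$ and hence are independent of the fiber coordinates $t_1,\dots,t_k$, they commute with the integration over $\square$ and may be factored out of the integral. The result is
\[
\dx \bFax = \Psi(\a;x)\cdot \int_{\square} \prod_{j=1}^{k+n+1} L_j^{\a_j} \cdot \Phi = \Psi(\a;x)\cdot \bFax,
\]
which is the asserted Pfaffian equation with exactly the connection matrix (\ref{eq:pfaffian-expression}).

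There is really no serious obstacle here, since the whole content of the statement is packaged into the two ingredients cited above, whose derivations occupy Section \ref{section:cohomology} and culminate in Theorem \ref{th:Psi-expression}. The one point worth pausing on is the compatibility of the two expressions for $\bFax$ used in this section: the compact one $\int_\square \prod L_j^{\a_j}\cdot \Phi$ that enters the argument above, and the explicit one $\bFax = G(\a;x)\tilde{\bF}(\a;x)$ assembled from the iterated derivatives of $\Fax$ via formula (\ref{eq:nabla}). These agree tautologically once the signs coming from the identification of an ordered index set $\under{i_l,\dots,i_1}{\dot J}{k+j_1,\dots,k+j_l}$ with its lexicographic representative $J^p\in\dot\cJ$ are absorbed into the diagonal factor $G(\a;x)$; the Pfaffian equation with matrix $\Psi(\a;x)$ then refers, as claimed, to the G-M vector $\bFax$ attached to the frame $\{\f\la J\ra\}_{J\in\dot\cJ}$ and not to $\tilde{\bF}(\a;x)$.
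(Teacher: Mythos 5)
Your proposal is correct and follows essentially the same route as the paper: the paper's own proof simply observes that since $\bF(\a;x)$ is defined as $\int_{\square}\prod_{j}L_j^{\a_j}\cdot\Phi$, the equation $\dx\bF(\a;x)=\Psi(\a;x)\bF(\a;x)$ follows at once from (\ref{eq:exd-GM}) and (\ref{eq:GM-connection}), which is exactly your argument spelled out in slightly more detail. Your remark on reconciling the two expressions for $\bF(\a;x)$ via the sign conventions absorbed into $G(\a;x)$ matches the discussion the paper carries out just before the theorem.
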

\begin{proof}
% Since 
% \begin{align*}
%   \bF(\a; x)&
% =
%   \begin{pmatrix}
%     F(\a;x) \\
%     \vdots \\ 
%     \pm \dfrac{|\tx \la \under{i_l ,\cdots ,i_1}{\dot{J}}{k+j_1 ,\ldots ,k+j_l} \ra |}
%     {\prod_{s=1}^l \a_{k+j_s}}
%     \cdot \dfrac{\pa^l F(\a ;x)}{\pa x_{i_1 j_1}\cdots \pa x_{i_l j_l}} \\
%     \vdots
%   \end{pmatrix}\\
% &  = \begin{pmatrix}
%      \int_{\De} \prod_{j=1}^{k+n+1} L_j^{\a _j} \f \la \dot{J} \ra \\
%      \int_{\De} \prod_{j=1}^{k+n+1} L_j^{\a _j}\f \la J^2 \ra \\
%      \vdots \\ 
%      \int_{\De} \prod_{j=1}^{k+n+1} L_j^{\a _j}\f \la J^r \ra 
%    \end{pmatrix}
% = \int_{\square} \prod_{j=1}^{k+n+1} L_j^{\a _j} \cdot \Phi,  
% \end{align*}
Since $\bFax$ is defined as $\int_{\square} \prod_{j=1}^{k+n+1} L_j^{\a _j} \cdot \Phi$, 
it satisfies $\dx\bFax=\Psi(\a;x)\bFax$. 
%by Theorem \ref{th:Psi-expression}. 
It is clear that this equation is equivalent to a rank $\binom{k+n}{k}$ 
system of differential equations annihilating $\Fax$.
\end{proof}

\begin{cor}
\label{cor:Pfaffian-diffops}
The G-M vector $\tilde{\bF}(\a;x)$ satisfies the Pfaffian equation 
$$\dx \tilde{\bF}(\a;x)=\tilde{\Psi}(\a;x) \tilde{\bF}(\a;x)$$
with the connection matrix 
$$\tilde{\Psi}(\a;x) =G(\a;x)^{-1}\Psi(\a;x)G(\a;x)+\dx G(\a;x)^{-1} G(\a;x).$$
\end{cor}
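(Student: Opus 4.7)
The plan is to treat the corollary as a routine change-of-frame computation for the Pfaffian system established in Theorem \ref{th:Pfaffian}. The relation $\bF(\a;x)=G(\a;x)\tilde{\bF}(\a;x)$ with $G(\a;x)\in GL(r;\W^0(X))$ is built into the definition of the two G-M vectors, so it just has to be differentiated and combined with the known equation $\dx \bF(\a;x)=\Psi(\a;x)\bF(\a;x)$.

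Concretely, I would first apply $\dx$ to both sides of $\bF(\a;x)=G(\a;x)\tilde{\bF}(\a;x)$, using the Leibniz rule, to obtain
\begin{equation*}
\dx \bF(\a;x)=\dx G(\a;x)\cdot \tilde{\bF}(\a;x)+G(\a;x)\cdot \dx \tilde{\bF}(\a;x).
\end{equation*}
Substituting Theorem \ref{th:Pfaffian} on the left-hand side and again using $\bF(\a;x)=G(\a;x)\tilde{\bF}(\a;x)$ gives
\begin{equation*}
\Psi(\a;x)G(\a;x)\tilde{\bF}(\a;x)=\dx G(\a;x)\cdot \tilde{\bF}(\a;x)+G(\a;x)\cdot \dx \tilde{\bF}(\a;x).
\end{equation*}
Since $G(\a;x)$ is invertible, I would left-multiply by $G(\a;x)^{-1}$ and solve for $\dx \tilde{\bF}(\a;x)$ to obtain
\begin{equation*}
\dx \tilde{\bF}(\a;x)=\bigl(G(\a;x)^{-1}\Psi(\a;x)G(\a;x)-G(\a;x)^{-1}\dx G(\a;x)\bigr)\tilde{\bF}(\a;x).
\end{equation*}

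The only small point to check is that the $-G^{-1}\dx G$ piece agrees with the $\dx G^{-1}\cdot G$ piece asserted in the corollary. This follows from differentiating the identity $G(\a;x)^{-1}G(\a;x)=I$, which yields $\dx G(\a;x)^{-1}\cdot G(\a;x)+G(\a;x)^{-1}\dx G(\a;x)=0$, hence $-G(\a;x)^{-1}\dx G(\a;x)=\dx G(\a;x)^{-1}\cdot G(\a;x)$. Substituting this gives exactly $\tilde{\Psi}(\a;x)=G(\a;x)^{-1}\Psi(\a;x)G(\a;x)+\dx G(\a;x)^{-1}G(\a;x)$, as required.

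There is no real obstacle here; the statement is essentially the gauge-transformation formula for connection matrices under the change of frame $\Phi\mapsto G(\a;x)^{-1}\Phi$, and the only thing worth recording is the sign convention used when writing the correction term in the form $\dx G^{-1}\cdot G$ rather than $-G^{-1}\dx G$.
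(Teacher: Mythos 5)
Your proof is correct and is essentially the computation in the paper: both are the standard gauge-transformation argument combining Theorem \ref{th:Pfaffian} with the relation $\bF(\a;x)=G(\a;x)\tilde{\bF}(\a;x)$. The only cosmetic difference is that the paper differentiates $\tilde{\bF}=G^{-1}\bF$ directly, which lands on the term $\dx G^{-1}\cdot G$ at once, whereas you differentiate $\bF=G\tilde{\bF}$ and then invoke $\dx(G^{-1}G)=0$ to rewrite $-G^{-1}\dx G$; both routes are equally valid.
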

\begin{proof}
We see the expression of $\tilde{\Psi}(\a;x)$. 
By Theorem \ref{th:Pfaffian}, the G-M vector 
$\tilde{\bF}(\a;x)=G(\a,x)^{-1}\bFax$ satisfies 
\begin{align*}
\dx \tilde{\bF}&=\dx(G^{-1}\bF)
=(\dx G^{-1})\bF+G^{-1}\dx\bF
\\
=&(\dx G^{-1})(GG^{-1})\bF+G^{-1}\Psi\bF
=[\dx G^{-1}G +G^{-1}\Psi G]\tilde{\bF},
\end{align*}
where $\bF=\bFax$, $\Psi=\Psi(\a,x)$ and $G=G(\a,x)$.
\end{proof}

\section{Contiguity relations}
\label{sec:contiguity}
In this section, we give contiguity relations of $\Fax$ 
by using linear maps on twisted cohomology groups and intersection forms. 

For $i=1,\ldots ,k+n+1$, we consider a linear map
$$
\W^l (T_x) \ni \f \mapsto L_i \cdot \f \in \W^l (T_x) .
$$
We put $\a^{(i)}:=(\a_0-1, \ldots ,\a_{i-1},\a_i+1,\a_{i+1} ,\ldots ,\a_{k+n+1})$,  
$\w^{(i)}:=\w +\dt \log L_i$, and 
$\na^{\a^{(i)}} =\dt +\w^{(i)}\we $.
\begin{notation}
  In this section, we write 
  \begin{align*}
    &\cohom =H^k (\Omd (T_x),\na^{\a}) ,& 
    &\cohomi =H^k (\Omd (T_x),\na^{\a^{(i)}}) ,\\
    &\cohom^{\vee}=H^k (\Omd (T_x),\na^{-\a}) ,& 
    &\cohomi^{\vee}=H^k (\Omd (T_x),\na^{-\a^{(i)}}) ,&
  \end{align*}
  for simplicity. 
  For a given $\psi \in \W^k (T_x)$, 
  to clarify which cohomology group $\psi$ belongs to, 
  we denote by $[\psi]$, $[\psi]_i$, $[\psi]^{\vee}$, and $[\psi]_i^{\vee}$
  the element of $\cohom$, $\cohomi$, $\cohom^{\vee}$, and $\cohomi^{\vee}$ 
  represented by $\psi$, respectively.
\end{notation}
\begin{proposition}
  \label{prop:linear-map}
  The map 
  \begin{align*}
    \Conti{i} : \cohomi \ni [\f ]_i \mapsto [L_i \cdot \f ] \in \cohom 
  \end{align*}
  % \begin{align*}
  %   \Conti{i} : \cohomi \to \cohom ;\quad 
  %   [\f ]_i \mapsto [L_i \cdot \f ]
  % \end{align*}
  is a well-defined linear map. 
\end{proposition}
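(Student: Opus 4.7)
The plan is to verify three things: (i) $L_i\cdot \f \in \W^k(T_x)$ whenever $\f\in \W^k(T_x)$, (ii) the assignment descends to the quotients, and (iii) $\C$-linearity. Step (iii) is immediate, since multiplication by the scalar function $L_i$ is $\C$-linear on forms. For (i), $L_i$ is a linear form on $\P^k$, hence a polynomial in the affine chart $t_0=1$; multiplication by such a polynomial can only reduce pole orders along $\P^k -T_x$ (and introduces no new poles, the zero locus $L_i=0$ already lying in $\P^k -T_x$), so it preserves $\W^k(T_x)$. The whole content of the proposition therefore lies in (ii), which I would reduce to a single intertwining identity between the two connections.

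Concretely, I would establish
$$
\naa (L_i \cdot \eta )=L_i\cdot \na^{\a^{(i)}}(\eta ) \quad \text{for every }\eta \in \W^{k-1}(T_x),
$$
by a two-line Leibniz computation: expand $\naa(L_i\eta )=\dt(L_i\eta)+\w\we(L_i\eta)=\dt L_i \we \eta+L_i\dt\eta+L_i\w\we\eta$, substitute $\dt L_i=L_i\cdot \dt\log L_i$ (legitimate because $L_i$ is nowhere vanishing on $T_x$), factor out $L_i$, and recognise the bracket as $\na^{\a^{(i)}}(\eta)$ via the defining relation $\w^{(i)}=\w+\dt\log L_i$.

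Granted this identity, (ii) is automatic: if $\f$ and $\f'$ represent the same class in $\cohomi$, i.e.\ $\f-\f'=\na^{\a^{(i)}}(\eta)$ for some $\eta\in \W^{k-1}(T_x)$, then $L_i(\f-\f')=\naa(L_i\eta)$ is a $\naa$-coboundary, so $[L_i\f]=[L_i\f']$ in $\cohom$. I do not foresee any genuine obstacle: the proposition is in essence the statement that multiplication by $L_i$ conjugates $\na^{\a^{(i)}}$ into $\naa$, and the shift $\a\mapsto \a^{(i)}$ is engineered precisely so that the logarithmic derivative $\dt\log L_i$ produced by the Leibniz rule is absorbed into the new connection form. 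As an aside, since $L_i$ is invertible on $T_x$, the same computation with $L_i^{-1}$ in place of $L_i$ will eventually show that $\Conti{i}$ is actually an isomorphism, though only well-definedness is asserted here.
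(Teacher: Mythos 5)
Your argument is correct and matches the paper's proof: the paper establishes exactly your intertwining identity $\na^{\a}(L_i\cdot\f)=L_i\,\na^{\a^{(i)}}(\f)$ by the same Leibniz computation using $\w^{(i)}=\w+\dt\log L_i$, and concludes descent from it. Your additional checks that $L_i$-multiplication preserves $\W^k(T_x)$ and is linear are harmless elaborations of points the paper leaves implicit.
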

\begin{proof}
  Since we have 
  \begin{align*}
    L_i \na^{\a^{(i)}} (\f )
    &=L_i \cdot (\dt \f +\w^{(i)} \we \f)
    =L_i \cdot \left( \dt \f +\w \we \f +\frac{\dt L_i}{L_i}\we \f \right) \\
    &=\dt (L_i \cdot \f )+ \w \we (L_i \cdot \f )
    =\na^{\a}(L_i \f),  
  \end{align*}
  the $L_i$-multiplication descends to a map from $\cohomi$ to $\cohom$. 
\end{proof}
Let $\conti{i} (\a;x)$ be the representation matrix of $\Conti{i}$ 
with respect to the bases 
$\{ [\f \la J \ra ]_i \}_{J\in \dot{\cJ}}$ of $\cohomi$ 
and $\{ [\f \la J \ra ] \}_{ J\in \dot{\cJ}}$ of $\cohom$.
Recall that $\bF (\a ;x)$ is a vector valued function defined as
$$
%\bFax =
\tr \Bigl( \int_{\square}\prod_{j=1}^{n+k+1} L_j^{\a_j} \f\la \dot J\ra ,
\int_{\square}\prod_{j=1}^{n+k+1} L_j^{\a_j} \f\la J^2 \ra ,\ldots ,
\int_{\square}\prod_{j=1}^{n+k+1} L_j^{\a_j} \f\la J^r \ra
\Bigr) .
%  \f\la \under{i_r,\dots,i_1}{\dot{J}}{k+j_1,\dots,k+j_r} \ra ,\ldots
$$
Since 
\begin{align*}
  \int_{\square} \prod_{j=1}^{k+n+1} L_j^{\a _j} 
  \cdot \Conti{i} ([\f \la \dot{J} \ra ]_i)
  =\int_{\square} \prod_{j=1}^{k+n+1} L_j^{\a _j} 
  \cdot L_i \cdot \f \la \dot{J} \ra  
  =F(\a^{(i)};x), 
\end{align*}
% The right-hand side is nothing but $F(\a^{(i)};x)$. 
% By Proposition \ref{prop:basis}, 
% we can express $\Conti{i} ([\f \la \dot{J} \ra ]_i)$ as 
% a linear combination of $[\f \la J \ra ]$ ($J\in \under{k+n+1}{\cJ}{0}$). 
% Since the equality (\ref{eq:nabla}) implies that 
% \begin{align*}
%   \int_{\De} \prod_{j=1}^{k+n+1} L_j^{\a_j} 
%   \f \la \under{i_r ,\cdots ,i_1}{\dot{J}}{k+j_1 ,\ldots ,k+j_r} \ra
%   =\frac{|\tx \la \under{i_r ,\cdots ,i_1}{\dot{J}}{k+j_1 ,\ldots ,k+j_r} \ra |}
%   {\prod_{s=1}^r \a_{k+j_s}}
%   \cdot \frac{\pa^r  F(\a ;x)}{\pa x_{i_1 j_1}\cdots \pa x_{i_r j_r}} ,
% \end{align*}
% $F(\a^{(i)};x)$ is expressed by $F(\a;x)$ and its derivatives. 
% By using $\f \la \under{i_r ,\cdots ,i_1}{\dot{J}}{k+j_1 ,\ldots ,k+j_r} \ra$, 
% we obtain the contiguity relation of $\frac{\pa^r}{\pa x_{i_1 j_1}\cdots \pa x_{i_r j_r}} F(\a ;x)$. 
% By arraying $\under{k+n+1}{\cJ}{0}$, we construct a vector-valued function
we have the contiguity relation
$$
\bF (\a^{(i)};x)=\conti{i} (\a;x) \cdot \bF (\a;x) .
$$
% \begin{align*}
%   \bF (\a^{(i)};x)=\conti{i} (\a;x) \cdot \bF (\a;x) .
% \end{align*}
We give an explicit expression of $\conti{i} (\a;x)$. 

\begin{theorem}
  \label{th:contiguity}
  The representation matrix $\conti{i}(\a;x)$ admits the expression  
  \begin{align*}
    % &\bF (\a^{(i)};x)=\conti{i} (\a;x) \cdot \bF (\a;x) ,\\
    % &
    \conti{i} (\a;x)=C(\a^{(i)})P_i (\a^{(i)})^{-1} D_i(x) Q_i (\a) C(\a)^{-1}, 
  \end{align*}
  where 
  \begin{align*}
    &D_i(x)=\diag \left( \ldots,\frac{|\tx \la J \ra |}{|\tx \la \under{i}{J}{0} \ra |} 
      ,\ldots \right)_{J\in\under{0}{\cJ}{i}} ,&
    &C(\a)=\Bigl( \cI (\f \la I \ra , \f \la J \ra )\Bigr)_{I,J\in \dot{\cJ}} , \\
    &P_i(\a)=\Bigl( \cI (\f \la I \ra , \f \la J \ra ) 
    \Bigr)_{I\in \under{0}{\cJ}{i},J\in \dot{\cJ}} ,& 
    &Q_i(\a)=\Bigl( \cI (\f \la I \ra , \f \la J \ra ) 
    \Bigr)_{I\in \under{i}{\cJ}{0},J\in \dot{\cJ}} .
    % C(\a)&:=\Bigl( \cI (\f \la I \ra , \f \la J \ra )\Bigr)_{I,J\in \dot{\cJ}} , \\
    % P_i(\a)&:=\Bigl( \cI (\f \la I \ra , \f \la J \ra ) 
    % \Bigr)_{I\in \under{0}{\cJ}{i},J\in \dot{\cJ}} , \\
    % Q_i(\a)&:=\Bigl( \cI (\f \la I \ra , \f \la J \ra ) 
    % \Bigr)_{I\in \under{i}{\cJ}{0},J\in \dot{\cJ}} , \\
    % D_i(x)&:=\diag \left( \ldots,\frac{|\tx \la J \ra |}{|\tx \la \under{i}{J}{0} \ra |} 
    %   ,\ldots \right)_{J\in\under{0}{\cJ}{i}} .
  \end{align*}
\end{theorem}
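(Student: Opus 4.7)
The strategy is to diagonalize $\Conti{i}$ by choosing alternative bases of $V^{(i)}$ and $V$, and then recover the matrix in the standard basis $\{\f\la J\ra\}_{J\in \dot{\cJ}}$ via change-of-basis formulas using the intersection form.

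First, I would observe that for $I \in \under{0}{\cJ}{i}$, multiplication by $L_i$ sends $\f\la I\ra$ to a scalar multiple of $\f\la \under{i}{I}{0}\ra$. Indeed, since $L_0 = t_0 = 1$ in the affine chart and $\under{i}{I}{0} = (I\setminus\{i\}) \cup \{0\}$, the explicit formula $\f\la J\ra = |\tx\la J\ra|\,\dd t / \prod_{p\in J} L_p$ yields
$$L_i \cdot \f\la I\ra = \frac{|\tx\la I\ra|\,\dd t}{\prod_{p\in I\setminus\{i\}} L_p} = \frac{|\tx\la I\ra|}{|\tx\la \under{i}{I}{0}\ra|}\,\f\la \under{i}{I}{0}\ra.$$
By Proposition \ref{prop:basis} applied with $\{p,q\} = \{0,i\}$, $\{\f\la I\ra\}_{I\in\under{0}{\cJ}{i}}$ is a basis of $V^{(i)}$ and $\{\f\la I'\ra\}_{I'\in \under{i}{\cJ}{0}}$ is a basis of $V$; moreover, the map $I\mapsto \under{i}{I}{0}$ is a bijection between their index sets. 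Hence the matrix of $\Conti{i}$ with respect to these alternative bases is precisely the diagonal matrix $D_i(x)$.

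Next, I would express the change of bases using the intersection pairing. Recall from the discussion preceding Proposition \ref{prop:MJ-expression} that any class $\f\la I\ra \in V$ admits the expansion $\f\la I\ra = v_I \Phi$, where the row vector $v_I = (\cI(\f\la I\ra, \f\la J^a\ra))_{a=1}^r \cdot C(\a)^{-1}$; the analogous formula holds in $V^{(i)}$ after replacing $\a$ by $\a^{(i)}$, producing coefficient row vectors $v_I^{(i)}$. Stacking these rows over $I\in\under{0}{\cJ}{i}$ in $V^{(i)}$ yields the transition matrix $P_i(\a^{(i)}) C(\a^{(i)})^{-1}$, which is invertible since $\{\f\la I\ra\}_{I\in\under{0}{\cJ}{i}}$ is a basis of $V^{(i)}$; stacking over $I'\in\under{i}{\cJ}{0}$ in $V$ yields $Q_i(\a) C(\a)^{-1}$.

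Finally, applying $\Conti{i}$ to $\f\la I\ra = v_I^{(i)}\Phi$ (viewed in $V^{(i)}$) gives $\Conti{i}(\f\la I\ra) = v_I^{(i)}\,\conti{i}(\a;x)\,\Phi$ in $V$, while by the first step this also equals $D_i(x)_I\, v_{\under{i}{I}{0}}\, \Phi$. Stacking over $I\in \under{0}{\cJ}{i}$, with the bijection $I \leftrightarrow \under{i}{I}{0}$ aligning the row orderings of $D_i(x)$, $P_i(\a^{(i)})$, and $Q_i(\a)$, yields the matrix identity
$$P_i(\a^{(i)}) C(\a^{(i)})^{-1}\,\conti{i}(\a;x) = D_i(x)\, Q_i(\a)\, C(\a)^{-1},$$
and left-multiplying by $C(\a^{(i)}) P_i(\a^{(i)})^{-1}$ gives the claimed formula. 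The main obstacle I anticipate is bookkeeping: one must verify that the bijection $I \leftrightarrow \under{i}{I}{0}$ matches the ordering convention implicit in the definitions of $D_i(x)$, $P_i(\a^{(i)})$, and $Q_i(\a)$, and confirm the invertibility of $P_i(\a^{(i)})$ via Proposition \ref{prop:basis}.
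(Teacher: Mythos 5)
Your proposal is correct and follows essentially the same route as the paper: compute $L_i\cdot\f\la J\ra$ for $J\in\under{0}{\cJ}{i}$ to see that $\Conti{i}$ is the diagonal matrix $D_i(x)$ in the bases indexed by $\under{0}{\cJ}{i}$ and $\under{i}{\cJ}{0}$, then convert to the frame $\{\f\la J\ra\}_{J\in\dot{\cJ}}$ via the transition matrices $P_i(\a^{(i)})C(\a^{(i)})^{-1}$ and $Q_i(\a)C(\a)^{-1}$ obtained from the intersection form. Your bookkeeping via the row vectors $v_J$ and the bijection $I\mapsto\under{i}{I}{0}$ matches the paper's alignment convention in Remark \ref{rem:arraying}, so there is no gap.
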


\begin{remark}
  \label{rem:arraying}
  \begin{enumerate}
  \item \label{arraying(1)}
    We explain how to align the elements of 
    $\under{k+n+1}{\cJ}{0}(=\dot{\cJ})$, $\under{0}{\cJ}{i}$, and $\under{i}{\cJ}{0}$. 
    First, recall that we align the elements of $\under{k+n+1}{\cJ}{0}$ lexicographically, 
    and denote them $\under{k+n+1}{\cJ}{0} =\{ J^1 ,\ldots ,J^r \}$, 
    where $r=\binom{k+n}{k}$. 
    Next, we align the elements of $\under{i}{\cJ}{0}$ as 
    \begin{align*}
      \under{i}{\cJ}{0} =\{ {J'}^1 ,\ldots , {J'}^r \} ,\quad
      {J'}^l =\left\{
        \begin{array}{ll}
          J^l & (i \not\in J^l), \\
          \under{i}{J^l}{k+n+1} & (i \in J^l). 
        \end{array}
      \right.
    \end{align*}
    Finally, we align the elements of $\under{0}{\cJ}{i}$ as 
    $$
    \under{0}{\cJ}{i} =\{ \under{0}{{J'}^1}{i} ,\ldots , \under{0}{{J'}^r}{i} \} .
    $$
    For example, if $k=n=2$ and $i=3$, then 
    \begin{align*}
      \under{5}{\cJ}{0}&=\{ \{012\},\{013\},\{014\},\{023\},\{024\},\{034\} \}, \\
      \under{3}{\cJ}{0}&=\{ \{012\},\{015\},\{014\},\{025\},\{025\},\{054\} \}, \\
      \under{0}{\cJ}{3}&=\{ \{312\},\{315\},\{314\},\{325\},\{325\},\{354\} \}. 
    \end{align*}
  \item 
    The intersection numbers $\cI (\f \la I \ra , \f \la J \ra )$ in the theorem
    can be computed by Fact \ref{fact:intersection}. 
    We denote the intersection pairing between 
    $\cohomi$ and $\cohomi^{\vee}$
    by $\cI^{(i)}$. Then, it is easy to see that 
    \begin{align*}
      \Bigl( \cI^{(i)} ([\f \la I \ra ]_i, [\f \la J \ra ]_i^{\vee} )
      \Bigr)_{I,J\in \dot{\cJ}}
      &=C(\a^{(i)}),\\ 
      \Bigl( \cI^{(i)} ([\f \la I \ra ]_i, [\f \la J \ra ]_i^{\vee} )
      \Bigr)_{I\in \under{0}{\cJ}{i},J\in \dot{\cJ}}
      &=P_i(\a^{(i)}). 
    \end{align*}
  \end{enumerate}
\end{remark}

\begin{proof}[Proof of Theorem \ref{th:contiguity}]
  Recall that $L_0=1$. 
  For $J \in \under{0}{\cJ}{i}$, because of  
  \begin{align*}
    \f \la J \ra = |\tx \la J \ra |\cdot \frac{\dd t}{\prod_{j\in J} L_j}
    = |\tx \la J \ra |\cdot \frac{\dd t}{L_i \cdot \prod_{j\in \under{i}{J}{0}} L_j} ,
  \end{align*}
  we have
  \begin{align*}
    L_i \cdot \f \la J \ra 
    = |\tx \la J \ra |\cdot \frac{\dd t}{\prod_{j\in \under{i}{J}{0}} L_j}
    =\frac{|\tx \la J \ra |}{|\tx \la \under{i}{J}{0} \ra |} \cdot \f \la \under{i}{J}{0} \ra .
  \end{align*}
  Hence, the alignment mentioned in Remark \ref{rem:arraying} (\ref{arraying(1)}) means that 
  the representation matrix of $\Conti{i}$ with respect to the bases
  \begin{align*}
    \{ [\f \la J \ra ]_i \}_{ J\in \under{0}{\cJ}{i}} \subset \cohomi ,\quad 
    \{ [\f \la J \ra ]  \}_{J\in \under{i}{\cJ}{0}} \subset \cohom 
  \end{align*}
  coincides with $D_i (x)$. 
  By linearity of the intersection forms $\cI$ and $\cI^{(i)}$, 
  we can show that 
  \begin{align*}
    \begin{pmatrix}
      [\f \la {J'}^1 \ra ] \\ \vdots \\ [\f \la {J'}^r \ra ] 
    \end{pmatrix}
    &=Q_i(\a)C(\a)^{-1}
    \begin{pmatrix}
      [\f \la J^1 \ra ] \\ \vdots \\ [\f \la J^r \ra ] 
    \end{pmatrix} ,\\ 
    \begin{pmatrix}
      [\f \la \under{0}{{J'}^1}{i} \ra ]_i \\ \vdots \\ [\f \la \under{0}{{J'}^r}{i} \ra ]_i 
    \end{pmatrix}
    &=P_i(\a^{(i)})C(\a^{(i)})^{-1}
    \begin{pmatrix}
      [\f \la J^1 \ra ]_i \\ \vdots \\ [\f \la J^r \ra ]_i 
    \end{pmatrix} .
  \end{align*}
  These imply that the representation matrix $\conti{i}(\a;x)$ 
  coincides with $C(\a^{(i)})P_i (\a^{(i)})^{-1} D_i(x) Q_i (\a) C(\a)^{-1}$. 
\end{proof}

% \section{Expression of $\Conti{i}$ by the intersection form}
In the remainder of this section, we consider relations between 
the linear map $\Conti{i}$ and the intersection form $\cI$. 
\begin{theorem}
  \label{th:contiguity-intersection}
  The linear map 
  $\Conti{i} : \cohomi \to \cohom$ is 
  expressed as 
  \begin{align}
    \label{eq:expression}
    \Conti{i} ([\f ]_i) &=\sum_{J\in \under{0}{\cJ}{i}}
    \frac{\cI^{(i)} ([\f ]_i, [\f \la \under{i}{J}{0} \ra ]_i^{\vee})}
    {\cI^{(i)} ([\f \la J \ra ]_i , [\f \la \under{i}{J}{0} \ra ]_i^{\vee} )}
    \cdot \frac{|\tx \la J \ra |}{|\tx \la \under{i}{J}{0} \ra |} 
    \cdot [\f \la \under{i}{J}{0} \ra ] \\
    \nonumber
    &=\sum_{J\in \under{i}{\cJ}{0}}
    \frac{\cI^{(i)} ([\f ]_i, [\f \la J \ra ]_i^{\vee})}
    {\cI^{(i)} ([\f \la \under{0}{J}{i} \ra ]_i , [\f \la J \ra ]_i^{\vee} )}
    \cdot \frac{|\tx \la \under{0}{J}{i} \ra |}{|\tx \la J \ra |} 
    \cdot [\f \la J \ra ] .
  \end{align}
\end{theorem}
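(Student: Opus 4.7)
The plan is to derive the frame-free expression by expanding $[\f]_i$ in a convenient basis of $\cohomi$ whose dual (with respect to $\cI^{(i)}$) is almost diagonal, and then applying the $L_i$-multiplication formula that was already computed inside the proof of Theorem~\ref{th:contiguity}. I will focus on the first expression; the second follows by reindexing.

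First, by Proposition~\ref{prop:basis} applied with parameters $\a^{(i)}$ and $(p,q)=(0,i)$, the classes $\{[\f\la J\ra]_i\}_{J\in \under{0}{\cJ}{i}}$ form a basis of $\cohomi$, and $\{[\f\la \under{i}{J}{0}\ra]_i^{\vee}\}_{J\in \under{0}{\cJ}{i}}$ (indexed via $\under{i}{\cJ}{0}$) form a basis of $\cohomi^{\vee}$. The key point is that the pairing matrix
$$
\Bigl(\cI^{(i)}\bigl([\f\la J\ra]_i,\, [\f\la \under{i}{J'}{0}\ra]_i^{\vee}\bigr)\Bigr)_{J,J'\in \under{0}{\cJ}{i}}
$$
is diagonal: for any such $J,J'$, the index sets $J$ and $\under{i}{J'}{0}$ share exactly the positions in $(J\setminus\{i\})\cap(\under{i}{J'}{0}\setminus\{0\})$, which has cardinality $k$ if and only if $J=J'$; otherwise Fact~\ref{fact:intersection} forces the intersection number to vanish. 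This is the same pattern used in the proof of Proposition~\ref{prop:basis}.

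Next, I would expand an arbitrary class $[\f]_i\in \cohomi$ as
$$
[\f]_i=\sum_{J\in \under{0}{\cJ}{i}} c_J\, [\f\la J\ra]_i,
$$
pair both sides against $[\f\la \under{i}{J}{0}\ra]_i^{\vee}$, and use the diagonality established above to solve
$$
c_J=\frac{\cI^{(i)}\bigl([\f]_i,\,[\f\la \under{i}{J}{0}\ra]_i^{\vee}\bigr)}
{\cI^{(i)}\bigl([\f\la J\ra]_i,\,[\f\la \under{i}{J}{0}\ra]_i^{\vee}\bigr)}.
$$
Then I apply $\Conti{i}$ termwise. For $J\in \under{0}{\cJ}{i}$ the computation already carried out in the proof of Theorem~\ref{th:contiguity} gives
$$
L_i\cdot \f\la J\ra=\frac{|\tx\la J\ra|}{|\tx\la \under{i}{J}{0}\ra|}\,\f\la \under{i}{J}{0}\ra,
$$
so $\Conti{i}([\f\la J\ra]_i)=\dfrac{|\tx\la J\ra|}{|\tx\la \under{i}{J}{0}\ra|}[\f\la \under{i}{J}{0}\ra]$. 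Substituting yields the first displayed formula of the theorem. The second formula is obtained from the first by the bijection $\under{0}{\cJ}{i}\to \under{i}{\cJ}{0}$, $J\mapsto J':=\under{i}{J}{0}$, under which $\under{0}{J'}{i}=J$.

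The only non-mechanical step is the diagonality of the intersection matrix in the shifted cohomology, but this is essentially a direct rerun of the argument in Proposition~\ref{prop:basis} combined with Fact~\ref{fact:intersection}. Everything else is a short basis expansion together with the explicit $L_i$-multiplication identity already established. Thus the proof is really just a reorganization of Theorem~\ref{th:contiguity} into intrinsic, basis-free language.
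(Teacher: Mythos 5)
Your proposal is correct and follows essentially the same route as the paper: the paper also checks the formula on the basis $\{[\f\la J\ra]_i\}_{J\in \under{0}{\cJ}{i}}$, using Fact \ref{fact:intersection} to see that only the $J=J'$ term survives and the $L_i$-multiplication identity from the proof of Theorem \ref{th:contiguity}, then extends by linearity and gets the second formula by the bijection $J\mapsto \under{i}{J}{0}$. Your explicit coefficient expansion via the almost-diagonal pairing is just a reorganization of that same argument.
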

\begin{proof}
  Since a correspondence 
  $\under{0}{\cJ}{i} \ni J \mapsto \under{i}{J}{0} \in \under{i}{\cJ}{0}$ is 
  one-to-one, the second equality is clear. 
  We show the first one. 
  By Fact \ref{fact:intersection}, we have 
  \begin{align*}
    \cI^{(i)} ([\f \la J \ra ]_i, [\f \la \under{i}{J'}{0} \ra ]_i^{\vee}) 
    =\left\{ 
      \begin{array}{cl}
        \DS \frac{(\tpi)^k}{\prod_{j\in J-\{ i\}} \a_j} & 
        {\rm if}\  J=J' ,\\
        0 & {\rm otherwise}, 
      \end{array}
    \right.
  \end{align*}
  for $J,J'\in \under{0}{\cJ}{i}$. 
  If $\f =\f \la J \ra $ with $J\in \under{0}{\cJ}{i}$, 
  then the right-hand side of (\ref{eq:expression}) is 
  $$
  \frac{|\tx \la J \ra |}{|\tx \la \under{i}{J}{0} \ra |} 
  \cdot [\f \la \under{i}{J}{0} \ra ] 
  $$
  which is nothing but $\Conti{i} ([\f \la J \ra ]_i)$, 
  by the proof of Theorem \ref{th:contiguity}. 
  Since $\{ [\f \la J \ra ]_i \}_{J\in \dot{\cJ} }$  
  form a basis of $\cohomi$, 
  the first equality holds. 
\end{proof}
In a way similar to that used in Proposition \ref{prop:linear-map}
and Theorem \ref{th:contiguity-intersection}, 
we can show the following. 
\begin{cor}
  The inverse map of $\Conti{i}$ is given by a well-defined map 
  $$
  \Conti{i}^{-1} : \cohom \ni [\f] \mapsto 
  \Big[ \frac{1}{L_i}\cdot \f \Big]_i \in \cohomi .
  $$
  % $$
  % \Conti{i}^{-1} : \cohom \to \cohomi ;\quad 
  % [\f] \mapsto \Big[ \frac{1}{L_i}\cdot \f \Big]_i .
  % $$
  It also admits the expression 
  \begin{align}
    \label{eq:expression2}
    \Conti{i}^{-1} ([\f ])
    =\sum_{J\in \under{0}{\cJ}{i}}
    \frac{\cI ([\f ], [\f \la J \ra ]^{\vee})}
    {\cI ([\f \la \under{i}{J}{0} \ra ] , [\f \la J \ra ]^{\vee} )}
    \cdot \frac{|\tx \la \under{i}{J}{0} \ra |}{|\tx \la J \ra |} 
    \cdot [\f \la J \ra ]_i .
  \end{align}
\end{cor}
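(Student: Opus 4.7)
The plan is to mimic the arguments of Proposition \ref{prop:linear-map} and Theorem \ref{th:contiguity-intersection}. For well-definedness of the map $[\f]\mapsto[\f/L_i]_i$, I would carry out the computation
\begin{align*}
\na^{\a^{(i)}}\!\left(\frac{\f}{L_i}\right)
=\frac{\dt\f}{L_i}-\frac{\dt L_i}{L_i^2}\we\f+\frac{1}{L_i}\!\left(\w+\frac{\dt L_i}{L_i}\right)\we\f
=\frac{1}{L_i}\na^{\a}\f,
\end{align*}
which shows that division by $L_i$ sends $\na^\a$-coboundaries to $\na^{\a^{(i)}}$-coboundaries, so the map $\cohom\to\cohomi$ is well-defined. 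The inverse property is then immediate, since multiplication by $L_i$ and by $1/L_i$ are literally inverse on the level of rational forms.

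For the explicit formula (\ref{eq:expression2}), I would verify it on the basis $\{[\f\la\under{i}{J}{0}\ra]\}_{J\in\under{0}{\cJ}{i}}$ of $\cohom$. Inverting the identity $L_i\cdot\f\la J\ra=(|\tx\la J\ra|/|\tx\la\under{i}{J}{0}\ra|)\cdot\f\la\under{i}{J}{0}\ra$ established in the proof of Theorem \ref{th:contiguity} gives
$$
\Conti{i}^{-1}\big([\f\la\under{i}{J_0}{0}\ra]\big)
=\frac{|\tx\la\under{i}{J_0}{0}\ra|}{|\tx\la J_0\ra|}\cdot[\f\la J_0\ra]_i
$$
for each $J_0\in\under{0}{\cJ}{i}$, so the task reduces to showing that substituting $\f=\f\la\under{i}{J_0}{0}\ra$ into the right-hand side of (\ref{eq:expression2}) produces the same value, and then extending by linearity.

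The only genuinely technical step is to show that in the resulting sum the single term with $J=J_0$ survives, i.e.\ $\cI([\f\la\under{i}{J_0}{0}\ra],[\f\la J\ra]^\vee)=0$ for $J\in\under{0}{\cJ}{i}$ with $J\ne J_0$. Using $i\in J_0\cap J$ and $0\notin J$, one gets $\under{i}{J_0}{0}\cap J=(J_0\cap J)-\{i\}$, whence $\#(\under{i}{J_0}{0}\cap J)=\#(J_0\cap J)-1$. By Fact \ref{fact:intersection} the intersection number is nonzero only if this cardinality is $k$ or $k+1$; the latter is excluded since $\under{i}{J_0}{0}\ne J$ (one contains $0$, the other $i$), and the former forces $J=J_0$. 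On the diagonal $J=J_0$, the numerator and denominator in (\ref{eq:expression2}) coincide (both fit the $\#(\cdot\cap\cdot)=k$ case of Fact \ref{fact:intersection} with the same position data for $0$ and $i$), so the coefficient collapses to $|\tx\la\under{i}{J_0}{0}\ra|/|\tx\la J_0\ra|$ as required. I do not foresee any obstacle beyond this combinatorial bookkeeping.
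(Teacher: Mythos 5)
Your proposal is correct and follows exactly the route the paper intends: it proves this corollary ``in a way similar to'' Proposition \ref{prop:linear-map} (the coboundary computation for $1/L_i$-multiplication) and Theorem \ref{th:contiguity-intersection} (verification of the intersection-form expression on the basis $\{[\f\la \under{i}{J}{0}\ra]\}_{J\in\under{0}{\cJ}{i}}$, with the off-diagonal terms killed by Fact \ref{fact:intersection}). Your cardinality argument and the collapse of the diagonal coefficient to $|\tx\la\under{i}{J_0}{0}\ra|/|\tx\la J_0\ra|$ are precisely the details the paper leaves to the reader.
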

\begin{remark}
  Expressions similar to (\ref{eq:expression}) and (\ref{eq:expression2}) 
  are given in \cite[\S 4.4.2]{AK} without the intersection forms. 
  % They are relations for some basis of the twisted cohomology groups. 
  % On the other hand, our results are applicable to any elements. 
  Their calculations are done in only the target space $\cohom$ (resp. $\cohomi$) 
  of $\Conti{i}$ (resp. $\Conti{i}^{-1}$), and are complicated. 
  On the other hand, by considering not only the target spaces 
  but also the domains of $\Conti{i}$ and $\Conti{i}^{-1}$,  
  we can obtain a simple structure of 
  contiguity relations. 
\end{remark}

Replacing $\a$ by $-\a^{(i)}$ in Proposition \ref{prop:linear-map}, 
we obtain the linear map 
\begin{align*}
  \Conti{i}^{\vee} : 
  \cohom^{\vee} \ni [\f ]^{\vee} \mapsto [L_i \cdot \f ]_i^{\vee} \in \cohomi^{\vee}.
\end{align*}
\begin{proposition}
  For any $[\f ]_i \in \cohomi$ and $[\psi]^{\vee} \in \cohom^{\vee}$, 
  we have
  \begin{align*}
    \cI \bigl( \Conti{i}([\f ]_i), [\psi]^{\vee} \bigr)
    = \cI^{(i)} \bigl( [\f ]_i , \Conti{i}^{\vee} ([\psi ]^{\vee}) \bigr) . 
  \end{align*}
\end{proposition}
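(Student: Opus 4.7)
The plan is to exploit that both $\Conti{i}$ and $\Conti{i}^{\vee}$ act as multiplication by $L_i$, so the claimed identity expresses a formal adjointness of $\cI$ and $\cI^{(i)}$ with respect to this operation. By $\C$-bilinearity of both sides in $[\f]_i$ and $[\psi]^{\vee}$, it suffices to verify the identity on basis pairs. By Proposition \ref{prop:basis}, I would choose the bases $\{[\f\la I\ra]_i\}_{I\in \under{0}{\cJ}{i}}$ of $\cohomi$ and $\{[\f\la K\ra]^{\vee}\}_{K\in \under{0}{\cJ}{i}}$ of $\cohom^{\vee}$; these are tailored so that multiplication by $L_i$ sends each basis vector to a single basis vector of the target cohomology.

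For $I,K\in \under{0}{\cJ}{i}$, the algebraic identity $L_i\cdot\f\la I\ra=\frac{|\tx\la I\ra|}{|\tx\la \under{i}{I}{0}\ra|}\f\la \under{i}{I}{0}\ra$ established in the proof of Theorem \ref{th:contiguity}, together with its analogue for $K$, reduces the desired equality to
$$
\frac{|\tx\la I\ra|}{|\tx\la \under{i}{I}{0}\ra|}\,\cI\bigl([\f\la \under{i}{I}{0}\ra],[\f\la K\ra]^{\vee}\bigr)
=\frac{|\tx\la K\ra|}{|\tx\la \under{i}{K}{0}\ra|}\,\cI^{(i)}\bigl([\f\la I\ra]_i,[\f\la \under{i}{K}{0}\ra]_i^{\vee}\bigr).
$$
Since $\under{i}{I}{0}$ contains $0$ but not $i$ while $K$ contains $i$ but not $0$, one has $\under{i}{I}{0}\cap K=(I\cap K)-\{i\}$, whose cardinality reaches $k$ only when $I=K$; the same holds for $I\cap\under{i}{K}{0}$. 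By Fact \ref{fact:intersection}, both sides therefore vanish unless $I=K$.

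For $I=K$, the determinantal prefactors coincide, and Fact \ref{fact:intersection} gives the two intersection numbers as $\pm(2\pi\sqrt{-1})^k/\prod_{j\in I-\{i\}}\a_j$ and $\pm(2\pi\sqrt{-1})^k/\prod_{j\in I-\{i\}}\a_j^{(i)}$. These denominators agree because $I-\{i\}$ avoids both $0$ and $i$---the only indices at which $\a$ and $\a^{(i)}$ differ. The one subtlety is the matching of the sign factors $(-1)^{p+q}$: with $q$ the position of $i$ in the sorted tuple $I$, the index $0$ sits at position $0$ in $\under{i}{I}{0}$, so both sides yield $(-1)^{q}$. This sign bookkeeping is the only step that demands care; everything else is immediate from the combinatorial setup.
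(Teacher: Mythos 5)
Your proof is correct, but it takes a genuinely different route from the paper's. The paper argues intrinsically: by \cite{M1} one writes $\f=\f'+\na^{\a^{(i)}}\eta$ with $\f'$ compactly supported, the computation in Proposition \ref{prop:linear-map} gives $L_i\cdot\f=L_i\cdot\f'+\na^{\a}(L_i\cdot\eta)$, and then both pairings are computed by one and the same integral $\int_{T_x}(L_i\cdot\f')\we\psi$; no choice of basis and no intersection-number formulas enter. You instead verify the identity on the bases $\{[\f\la I\ra]_i\}_{I\in\under{0}{\cJ}{i}}$ and $\{[\f\la K\ra]^{\vee}\}_{K\in\under{0}{\cJ}{i}}$ (legitimate by Proposition \ref{prop:basis}, which applies to $\a^{(i)}$ as well since integer shifts preserve the non-integrality assumption), reduce via $L_i\cdot\f\la J\ra=\frac{|\tx\la J\ra|}{|\tx\la \under{i}{J}{0}\ra|}\f\la\under{i}{J}{0}\ra$ from the proof of Theorem \ref{th:contiguity}, and finish with Fact \ref{fact:intersection} applied with parameters $\a$ on one side and $\a^{(i)}$ on the other. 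Your combinatorics check out: off-diagonal terms vanish because $\#(\under{i}{I}{0}\cap K)=\#(I\cap K)-1$, the denominators agree because $I-\{i\}$ avoids $0$ and $i$ (the only places where $\a$ and $\a^{(i)}$ differ), and the sign is $(-1)^{q}$ on both sides; the only point worth stating explicitly is that the ordering convention for $\under{i}{I}{0}$ must be used consistently in the determinant prefactor and in the intersection number, which is harmless since $\f\la J\ra/|\tx\la J\ra|$ depends only on the underlying set. What your route buys is, in effect, a direct entrywise verification that amounts to the matrix identity $\conti{i}(\a;x)C(\a)=C(\a^{(i)})\tr\conti{i}(-\a^{(i)};x)$, which the paper instead deduces as a corollary; what the paper's route buys is basis-independence and conceptual transparency --- the adjointness is simply the fact that the factor $L_i$ can be moved across the wedge-and-integrate pairing --- together with independence from the explicit intersection numbers.
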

\begin{proof}
  By \cite{M1}, there exist $C^{\infty}$ $k$-forms $\f'$ and $\eta$ on $T_x$
  such that the support of $\f'$ is compact and
  $$
  \f =\f' +\na^{\a^{(i)}} \eta . 
  $$
  By the proof of Proposition \ref{prop:linear-map}, we have 
  $$
  L_i \cdot \f =L_i \cdot \f' +\na^{\a} (L_i \cdot \eta ) .
  $$
  Since the support of $L_i \cdot \f'$ is also compact, 
  the intersection numbers are expressed as 
  \begin{align*}
    \cI \bigl( \Conti{i}([\f ]_i), [\psi]^{\vee} \bigr)
    &=\int_{T_x} (L_i \cdot \f') \wedge \psi ,\\
    \cI^{(i)} \bigl( [\f]_i , \Conti{i}^{\vee} ([\psi]^{\vee}) \bigr)
    &=\int_{T_x} \f' \wedge (L_i \cdot \psi ).
  \end{align*}
  Obviously, these two integrations coincide. 
\end{proof}
By considering the bases of 
$\cohom$, $\cohom^{\vee}$, $\cohomi$, and $\cohomi^{\vee}$
represented by $\{ \f \la J \ra \}_{J\in \dot{\cJ}}$, 
we obtain the following identity. 
\begin{cor}
  $\conti{i} (\a ;x) C(\a) =C(\a^{(i)}) \tr \conti{i} (-\a^{(i)} ;x)$. 
\end{cor}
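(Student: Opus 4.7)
The plan is to combine the preceding proposition (compatibility of $\Conti{i}$ and $\Conti{i}^{\vee}$ with the intersection pairings) with a symmetry observation that identifies the representation matrix of $\Conti{i}^{\vee}$.

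First, I would note that the construction of $\Conti{i}$ and its representation matrix $\conti{i}(\a;x)$ is natural in the parameter. Substituting $\tilde{\a}:=-\a^{(i)}$ for $\a$ throughout the definition yields $\tilde{\a}^{(i)}=-\a$, so the resulting map goes from $H^k(\Omd(T_x),\na^{-\a})$ to $H^k(\Omd(T_x),\na^{-\a^{(i)}})$, which is precisely $\Conti{i}^{\vee}$. Consequently, the representation matrix of $\Conti{i}^{\vee}$ with respect to the bases $\{[\f\la J\ra]^{\vee}\}_{J\in \dot{\cJ}}$ of $\cohom^{\vee}$ and $\{[\f\la J\ra]_i^{\vee}\}_{J\in \dot{\cJ}}$ of $\cohomi^{\vee}$ is $\conti{i}(-\a^{(i)};x)$.

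Next, I would apply the preceding proposition to the basis pair $[\f\la J\ra]_i$ and $[\f\la J'\ra]^{\vee}$ with $J,J'\in \dot{\cJ}$. Using the expansions
\begin{align*}
  \Conti{i}([\f\la J\ra]_i) &= \sum_{K\in\dot{\cJ}} \conti{i}(\a;x)_{JK}\, [\f\la K\ra] , \\
  \Conti{i}^{\vee}([\f\la J'\ra]^{\vee}) &= \sum_{K\in \dot{\cJ}} \conti{i}(-\a^{(i)};x)_{J'K}\, [\f\la K\ra]_i^{\vee} ,
\end{align*}
where the row/column convention is read off from the relation $\bF(\a^{(i)};x)=\conti{i}(\a;x)\bF(\a;x)$ in the proof of Theorem \ref{th:contiguity}, both sides of the compatibility identity expand into double sums whose coefficients are entries of $C(\a)$ and $C(\a^{(i)})$, respectively.

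Finally, I would assemble the resulting scalar equalities into a single matrix identity: the left-hand side of the proposition becomes $(\conti{i}(\a;x)\, C(\a))_{JJ'}$ and the right-hand side becomes $(C(\a^{(i)})\,\tr\conti{i}(-\a^{(i)};x))_{JJ'}$. Since this holds for all $J,J'\in \dot{\cJ}$, the corollary follows. The only delicate point is keeping the index convention for the representation matrix consistent when it is viewed as acting on $\cohomi$ versus on $\cohom^{\vee}$; once this bookkeeping is pinned down by the contiguity relation above, no further computation is needed.
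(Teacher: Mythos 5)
Your proposal is correct and follows essentially the same route as the paper: the paper's (terse) proof is precisely to express the compatibility proposition $\cI(\Conti{i}([\f]_i),[\psi]^{\vee})=\cI^{(i)}([\f]_i,\Conti{i}^{\vee}([\psi]^{\vee}))$ in the bases represented by $\{\f\la J\ra\}_{J\in\dot{\cJ}}$, using that the matrix of $\Conti{i}^{\vee}$ is $\conti{i}(-\a^{(i)};x)$ (since replacing $\a$ by $-\a^{(i)}$ sends $\a^{(i)}$ to $-\a$) and that the Gram matrices are $C(\a)$ and $C(\a^{(i)})$. Your bookkeeping of the row/column convention via $\bF(\a^{(i)};x)=\conti{i}(\a;x)\bF(\a;x)$ is exactly the detail the paper leaves implicit.
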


\section{Relations for hypergeometric series}
\label{sec:series}
For applications to algebraic statistics, we need to reduce our formulas 
for hypergeometric integrals to those 
for hypergeometric series. 
In this section, we specialize a twisted cycle $\square$ to 
$\De\la 1,\dots,k,k+n+1\ra$, which is the regularization of 
the standard simplex (\ref{eq:s-simplex}) with respect to 
$\prod_{j=1}^{k+n+1} L_j^{\a_j}$.
Then the integral $\Fax$ admits a power series expansion for $x$ 
sufficiently close to the zero matrix $O$.
We give relations between this expansion and the hypergeometric series 
defined in \cite{AK}.

We put 
\begin{align*}
  S(\a;x)=
  \sum_{m=(m_{ij})\in M(k,n;\Z_{\geq 0})}
  \frac{1}{\G_m(\a)} \cdot \prod_{i,j}x_{ij}^{m_{ij}} ,
\end{align*}
where 
\begin{align*}
  \G_m(\a)=&
  \prod_{i=1}^k \G(-\a_i -\sum_{j=1}^n m_{ij} +1)
  \cdot \prod_{j=1}^n \G(\a_{k+j}-\sum_{i=1}^k m_{ij}+1) \\
  &\cdot \G(\sum_{i=1}^{k}\a_i+\a_{k+n+1}+\sum_{i=1}^k \sum_{j=1}^n m_{ij}+1)
  \cdot \prod_{i=1}^k \prod_{j=1}^n \G(m_{ij}+1).
\end{align*}
Note that $S(\a;x)$ coincides with the hypergeometric series 
of type $(k+1,k+n+2)$ defined in \cite[\S 3.1.3]{AK}, modulo gamma factors. 
\begin{proposition}
  We specialize a twisted cycle $\square$ to 
  $\De\la 1,\dots,k,k+n+1\ra$. 
%  We assume the condition mentioned in Section \ref{sec:pfaffian} 
%  so that the integral $F(\a;x)$ converges. 
  If each $x_{ij}$ is sufficiently close to $0$, 
  then the integral $F(\a ;x)$ admits the power series expansion
  \begin{align*}
    e^{-\pi \sqrt{-1}(\a_1 +\cdots +\a_k)} \cdot 
    \prod_{i=1}^k \G(\a_i)\G(-\a_i+1) \cdot 
    \prod_{j=1}^{n+1}\G(\a_{k+j} +1) \cdot S(\a ;x) .
  \end{align*}
\end{proposition}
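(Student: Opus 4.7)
The plan is to reduce the claim to a termwise Dirichlet integration followed by one application of the gamma reflection formula. First, since the $x_{ij}$ are close to $0$, I would expand each factor $(1+\sum_{i=1}^k x_{ij}t_i)^{\a_{k+j}}$ by the multinomial/binomial series
\begin{align*}
\Bigl( 1+\sum_{i=1}^k x_{ij}t_i \Bigr)^{\a_{k+j}}
= \sum_{m_{1j},\dots,m_{kj}\ge 0}
\frac{\G(\a_{k+j}+1)}{\G\bigl(\a_{k+j}-\sum_i m_{ij}+1\bigr)\prod_i \G(m_{ij}+1)}
\prod_i (x_{ij}t_i)^{m_{ij}} .
\end{align*}
Uniform convergence on the regularized cycle $\De\la 1,\dots,k,k+n+1\ra$ (for $x$ small enough, the tubes added around $t_i=0$ and $1+\sum t_i=0$ stay uniformly away from the hyperplanes $L_{k+j}=0$) permits termwise integration, so the problem reduces to evaluating
\begin{align*}
I_m := \int_{\square} \prod_{i=1}^k t_i^{\a_i+\sum_j m_{ij}-1}
\Bigl(1+\sum_{i=1}^k t_i\Bigr)^{\a_{k+n+1}} \dd t_1\we\cdots\we\dd t_k
\end{align*}
for each multi-index $m=(m_{ij})\in M(k,n;\Z_{\ge 0})$.

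Next, I would compute $I_m$ as a classical Dirichlet integral. The change of variables $t_i=-s_i$ carries $\De$ onto the standard positive simplex $\{s_i>0,\ \sum s_i<1\}$. The branch on $\De$ encoded by the twisted cycle $\De\la 1,\dots,k,k+n+1\ra$ is $\arg(t_i)=-\pi$, so $t_i^{\a_i}=s_i^{\a_i}e^{-\pi\sqrt{-1}\a_i}$. Combining this phase with the $k$ sign flips in $\dd t_i=-\dd s_i$ and with the integer monomials $t_i^{M_i}=(-1)^{M_i}s_i^{M_i}$, where $M_i:=\sum_j m_{ij}$, yields an overall factor $e^{-\pi\sqrt{-1}(\a_1+\cdots+\a_k)}(-1)^{\sum_{i,j} m_{ij}}$ in front of the classical Dirichlet integral. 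Analytic continuation in $\a$ of the Dirichlet formula (or equivalently evaluation of the regularized integral via small spherical tubes) then gives
\begin{align*}
I_m=e^{-\pi\sqrt{-1}(\a_1+\cdots+\a_k)}(-1)^{\sum_{i,j} m_{ij}}
\cdot \frac{\prod_{i=1}^k \G(\a_i+M_i)\cdot \G(\a_{k+n+1}+1)}
{\G\bigl(\sum_{i=1}^k(\a_i+M_i)+\a_{k+n+1}+1\bigr)} .
\end{align*}

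Finally, I would convert $\G(\a_i+M_i)$ into the form appearing in $\G_m(\a)$ using the reflection formula $\G(z)\G(1-z)=\pi/\sin(\pi z)$: since $M_i\in\Z_{\ge 0}$ and $\sin(\pi(\a_i+M_i))=(-1)^{M_i}\sin(\pi\a_i)$, one gets
\begin{align*}
\G(\a_i+M_i)=\frac{(-1)^{M_i}\,\G(\a_i)\G(-\a_i+1)}{\G(-\a_i-M_i+1)} .
\end{align*}
Taking the product over $i$ produces a sign $(-1)^{\sum_{i,j} m_{ij}}$ that exactly cancels the sign picked up in the previous step, and assembling the remaining factors gives the claimed formula with $\G_m(\a)$ in the denominator (the factor $\prod_{j=1}^{n+1}\G(\a_{k+j}+1)$ is assembled from the $\G(\a_{k+j}+1)$ numerators in the multinomial expansion together with $\G(\a_{k+n+1}+1)$ from the Dirichlet integral). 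The main obstacle is the clean bookkeeping of branches and signs on the regularized cycle: once the branch $\arg(t_i)=-\pi$ is pinned down and the $(-1)^{\sum m_{ij}}$ from the substitution is matched with the $(-1)^{\sum M_i}$ from reflection, the rest is just the Dirichlet/Selberg integral plus one reflection identity.
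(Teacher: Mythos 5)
Your proposal is correct and follows essentially the same route as the paper: fixing the branch $\arg t_i=-\pi$ on the standard simplex, substituting $t_i=e^{-\pi\sqrt{-1}}s_i$, and then expanding and integrating termwise as in Aomoto--Kita \S 3.3 (which the paper simply cites), with the reflection formula converting $\G(\a_i+M_i)$ into the factors of $\G_m(\a)$. You have merely written out in full the computation the paper delegates to the reference, and the sign/phase bookkeeping checks out.
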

\begin{proof} 
  % Let $\De$ be a twisted cycle defined by a domain 
  % $$
  % \{ (t_1 ,\ldots ,t_k)\in \R^k \mid t_i <0,\ 1+\sum_{i=1}^k t_i >0 \} ,
  % $$
  % where the arguments of $L_i$ are given as follows.
  We give the arguments of $L_i$ on the standard simplex $\De$ in 
(\ref{eq:s-simplex}) as follows.
  \begin{align*}
    \begin{array}{|c|c|c|c|}
      \hline
      &i=1,\ldots ,k & i=k+1,\ldots ,k+n & i=k+n+1 \\ \hline
      \arg L_i &-\pi & 0 & 0 \\ \hline
    \end{array}
  \end{align*}
  By putting $t_i=e^{-\pi \sqrt{-1}}s_i (=-s_i)$ in 
  the integration
  \begin{align*}
    F(\a ;x)=\int_{\De\la 1,\dots,k,k+n+1\ra} \prod_{j=1}^{k+n+1} L_j^{\a_j} \f \la \dot{J} \ra ,
  \end{align*}
  we can show the proposition in an analogous way used in \cite[\S 3.3]{AK}.
\end{proof}
% More generally, we have 
% \begin{align*}
%   \int_{\si} \prod_{j=1}^{k+n+1} L_j^{\a_j} 
%   \f \la \under{i_r ,\cdots ,i_1}{\dot{J}}{k+j_1 ,\ldots ,k+j_r} \ra
%   =\frac{|\tx \la \under{i_r ,\cdots ,i_1}{\dot{J}}{k+j_1 ,\ldots ,k+j_r} \ra |}
%   {\prod_{s=1}^r \a_{k+j_s}}
%   \cdot \frac{\pa^r}{\pa x_{i_1 j_1}\cdots \pa x_{i_r j_r}} 
%   \int_{\si} \prod_{j=1}^{k+n+1} L_j^{\a_j} \f \la \dot{J} \ra ,
% \end{align*}
% for any twisted cycle $\si$. 

We put 
\begin{align*}
  \bS(\a;x)&=
  e^{\pi \sqrt{-1}(\a_1 +\cdots +\a_k)} \cdot 
  \prod_{i=1}^k \frac{1}{\G(\a_i)\G(-\a_i+1)} \cdot 
  \prod_{j=1}^{n+1}\frac{1}{\G(\a_{k+j} +1)} \cdot \bF(\a, x) \\
  &=
  \begin{pmatrix}
    S(\a;x) \\
    \vdots \\ 
    \pm \dfrac{|\tx \la \under{i_l ,\cdots ,i_1}{\dot{J}}{k+j_1 ,\ldots ,k+j_l} \ra |}
    {\prod_{s=1}^l \a_{k+j_s}}
    \cdot \dfrac{\pa^l S(\a ;x)}{\pa x_{i_1 j_1}\cdots \pa x_{i_l j_l}}  \\
    \vdots
  \end{pmatrix}, 
\end{align*}
which is the G-M vector of $S(\a;x)$. 
We consider the Pfaffian equation and 
the contiguity relations with respect to $\bS (\a;x)$. 

\begin{cor}
  \label{cor:pfaffian}
  $\dx \bS(\a;x)=\Psiax \bS(\a;x)$.
\end{cor}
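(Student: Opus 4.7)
The plan is to reduce the statement directly to Theorem \ref{th:Pfaffian} by observing that $\bS(\a;x)$ and $\bF(\a;x)$ differ only by a scalar factor that is independent of $x$.

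First, I would verify that the preceding proposition, which asserts
\[
F(\a;x)=e^{-\pi\sqrt{-1}(\a_1+\cdots+\a_k)}\cdot\prod_{i=1}^k\G(\a_i)\G(-\a_i+1)\cdot\prod_{j=1}^{n+1}\G(\a_{k+j}+1)\cdot S(\a;x),
\]
upgrades at the level of the full G-M vector. Since the scalar factor depends only on $\a$, applying any partial differential operator $\pa/\pa x_{ij}$ to both sides preserves the factor, and by iteration the same relation holds componentwise between the entries $\pa^l F/\pa x_{i_1 j_1}\cdots\pa x_{i_l j_l}$ and $\pa^l S/\pa x_{i_1 j_1}\cdots\pa x_{i_l j_l}$. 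Because the coefficients
\[
\pm\frac{|\tx\la\under{i_l,\dots,i_1}{\dot J}{k+j_1,\dots,k+j_l}\ra|}{\prod_{s=1}^l \a_{k+j_s}}
\]
appearing in the definition of $\bF(\a;x)$ and $\bS(\a;x)$ are identical, we obtain
\[
\bF(\a;x)=e^{-\pi\sqrt{-1}(\a_1+\cdots+\a_k)}\cdot\prod_{i=1}^k\G(\a_i)\G(-\a_i+1)\cdot\prod_{j=1}^{n+1}\G(\a_{k+j}+1)\cdot\bS(\a;x),
\]
which is exactly the definition of $\bS(\a;x)$ rearranged.

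Next, applying $\dx$ to both sides and using that the gamma/exponential prefactor is $\dx$-closed, I would invoke Theorem \ref{th:Pfaffian} to write
\[
\dx\bF(\a;x)=\Psi(\a;x)\bF(\a;x),
\]
and then divide by the same scalar prefactor to conclude $\dx\bS(\a;x)=\Psi(\a;x)\bS(\a;x)$.

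There is essentially no obstacle; the only point requiring care is confirming that the normalization relating $\bF$ and $\bS$ truly holds componentwise (and not merely for the top entry), which follows because the normalization constant is $x$-independent and the entries of the two vectors are obtained from their top entries by identical differential operators with identical scalar coefficients.
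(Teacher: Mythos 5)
Your proposal is correct and follows essentially the same route as the paper: the paper defines $\bS(\a;x)$ as an $x$-independent scalar multiple of $\bF(\a;x)$ and immediately invokes Theorem \ref{th:Pfaffian}, which is exactly your argument. The extra care you take in checking that the scalar relation between $F$ and $S$ holds componentwise for the whole G-M vector is a detail the paper absorbs into the definition of $\bS$, so it adds justification but no new idea.
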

\begin{proof}
  $\bS (\a;x)$ is defined as a scalar multiple of $\bF (\a;x)$ and 
  this scalar is independent of $x$. 
\end{proof}

\begin{cor}
  \label{cor:contiguity}
  For $1\leq i \leq k$ and $1\leq j \leq n+1$, we have
  \begin{align*}
    \bS(\a^{(i)};x)&=\conti{i}(\a;x)\bS(\a;x), \\
    \bS(\a^{(k+j)};x)&=\frac{1}{\a_{k+j}+1}\conti{k+j}(\a;x)\bS(\a;x) .
  \end{align*}
\end{cor}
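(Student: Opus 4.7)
The plan is to deduce both identities from the $\bF$-contiguity relation
$\bF(\a^{(m)};x)=\conti{m}(\a;x)\,\bF(\a;x)$ established in Section~\ref{sec:contiguity}, using the fact that $\bS$ and $\bF$ differ only by an explicit scalar depending on $\a$. Writing
\[
c(\a):=e^{\pi\sqrt{-1}(\a_1+\cdots+\a_k)}\prod_{l=1}^{k}\frac{1}{\G(\a_l)\G(-\a_l+1)}\prod_{j=1}^{n+1}\frac{1}{\G(\a_{k+j}+1)},
\]
so that $\bS(\a;x)=c(\a)\,\bF(\a;x)$ by definition, multiplying the $\bF$-contiguity relation by $c(\a^{(m)})$ immediately gives
\[
\bS(\a^{(m)};x)=\frac{c(\a^{(m)})}{c(\a)}\,\conti{m}(\a;x)\,\bS(\a;x),
\]
and the entire task reduces to evaluating the scalar ratio $c(\a^{(m)})/c(\a)$ in the two cases $m=i$ and $m=k+j$. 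Since $\a_0$ does not appear in $c(\a)$, the component $\a_0\mapsto\a_0-1$ of each shift can be ignored.

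First I would handle the case $1\le i\le k$, where $\a^{(i)}$ affects $c(\a)$ only through $\a_i\mapsto\a_i+1$. Using $\G(z+1)=z\G(z)$ I compute
\[
\frac{\G(\a_i)\G(-\a_i+1)}{\G(\a_i+1)\G(-\a_i)}=\frac{\G(\a_i)\cdot(-\a_i)\G(-\a_i)}{\a_i\G(\a_i)\cdot\G(-\a_i)}=-1,
\]
and then observe that the exponential prefactor contributes an extra $e^{\pi\sqrt{-1}}=-1$. The two sign changes cancel, yielding $c(\a^{(i)})/c(\a)=1$ and hence the first identity of the corollary.

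Next I would handle $m=k+j$ with $1\le j\le n+1$. Here $\a^{(k+j)}$ leaves the exponential factor and every $\G(\a_l)\G(-\a_l+1)$ for $l\le k$ unchanged, and touches $c(\a)$ only through the replacement $\G(\a_{k+j}+1)\mapsto\G(\a_{k+j}+2)=(\a_{k+j}+1)\G(\a_{k+j}+1)$. Thus $c(\a^{(k+j)})/c(\a)=1/(\a_{k+j}+1)$, giving the second identity. The only item requiring any care throughout is the bookkeeping that confirms $\a^{(m)}$ affects $c(\a)$ through exactly one index besides $\a_0$; once that is checked, the gamma-function manipulations are immediate, so I do not foresee any real obstacle.
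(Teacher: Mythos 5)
Your proposal is correct and follows essentially the same route as the paper: both deduce the identities from the contiguity relation $\bF(\a^{(m)};x)=\conti{m}(\a;x)\bF(\a;x)$ of Section \ref{sec:contiguity} by computing the ratio of the $\a$-dependent scalar relating $\bS$ to $\bF$, using $(-1)\cdot\G(\a_i)\G(-\a_i+1)/(\G(\a_i+1)\G(-\a_i))=1$ for $1\le i\le k$ and $\G(\a_{k+j}+1)/\G(\a_{k+j}+2)=1/(\a_{k+j}+1)$ for $1\le j\le n+1$. Your write-up just makes explicit the sign bookkeeping (exponential factor versus the Gamma reflection shift) that the paper leaves implicit.
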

\begin{proof}
  By
  \begin{align*}
    (-1)\cdot \frac{\G(\a_i)\G(-\a_i+1)}{\G(\a_i+1)\G(-\a_i)} =1, \quad 
    \frac{\G(\a_{k+j} +1)}{\G(\a_{k+j} +2)}=\frac{1}{\a_{k+j} +1} ,  
  \end{align*}
  and Theorem \ref{th:contiguity}, we have the identities. 
\end{proof}

\section{Normalizing constants of two-way contingency tables}
\label{sec:NC}
We apply contiguity relations and the Pfaffian equation 
to the numerical evaluation of 
the normalizing constants of 
the hypergeometric distribution of 
the $r_1 \times r_2$ contingency tables 
with fixed marginal sums. 
In this section, we explain how our results are applied, 
and give an algorithm that evaluates the normalizing constants. 

We consider an $r_1 \times r_2$ contingency table 
\begin{align*}
  % \label{eq:contingency-table}
  u=
  \begin{array}{|c|c|c|c|c}
    \cline{1-4} 
    u_{11} & u_{12} & \cdots & u_{1r_2} &\b_{1}^{(1)} \\ \cline{1-4} 
    u_{21} & u_{22} & \cdots & u_{2r_2} &\b_{2}^{(1)} \\ \cline{1-4} 
    \vdots & \vdots &        & \vdots &\vdots \\ \cline{1-4} 
    u_{r_1 1} & u_{r_1 2} & \cdots & u_{r_1 r_2} &\b_{r_1}^{(1)} \\  \cline{1-4} 
    % \multicolumn{1}{c}{} &\multicolumn{1}{c}{} 
    % &\multicolumn{1}{c}{} & \multicolumn{1}{c}{} &&  \\
    \multicolumn{1}{c}{\b_{1}^{(2)}}&\multicolumn{1}{c}{\b_{2}^{(2)}} 
    &\multicolumn{1}{c}{\cdots} & \multicolumn{1}{c}{\b_{r_2}^{(2)}} & 
    % \sum_{i,j}u_{ij}
  \end{array} 
  \qquad u_{ij} \in \Z_{\geq 0} .
\end{align*}
Here, $\b^{(1)}_i :=\sum_{j=1}^{r_2}u_{ij}$ is the row sum, and  
$\b^{(2)}_j :=\sum_{i=1}^{r_1}u_{ij}$ is the column sum. 
For fixed marginal sums 
$\b=(\b^{(1)};\b^{(2)})=(\b_1^{(1)},\ldots ,\b_{r_1}^{(1)}; \b_1^{(2)},\ldots ,\b_{r_2}^{(2)})$ 
and a variable matrix $p=(p_{ij})_{1\leq i \leq r_1,1\leq j \leq r_2}$,  
the polynomial 
$$
Z(\b ;p)= \sum_u \frac{p^u}{u!} 
=\sum_u \frac{\prod_{i,j} p_{ij}^{u_{ij}}}{\prod_{i,j} u_{ij} !}
=\sum_u \frac{\prod_{i,j} p_{ij}^{u_{ij}}}{\prod_{i,j} \G (u_{ij}+1)}
$$
in $p_{ij}$ is called the normalizing constant, 
where the sum is taken over all contingency tables $u$ with 
marginal sums $\b$. 

\begin{proposition}
  \label{prop:NCtoS}
  We put the parameters and variables in $S(\a;x)$ as follows: 
  \begin{align*}
    (k,n)&:=(r_1-1,r_2-1), \\
    \a &=(\a_0,\ldots ,\a_{k+n+1})
    =(\a_0,\a_1 ,\ldots ,\a_{r_1-1},\a_{r_1}, \ldots ,\a_{r_1+r_2-2} ,\a_{r_1+r_2-1})\\
    &:=(-\b_{r_1}^{(1)}, -\b_1^{(1)},\ldots ,-\b_{r_1-1}^{(1)}, 
    \b_2^{(2)},\ldots ,\b_{r_2}^{(2)},\b_1^{(2)}), \\
    x&:=(x_{ij})_{1\leq i \leq k,1\leq j \leq n}, \quad x_{ij}=\frac{p_{i,j+1}p_{r_1 1}}{p_{i1}p_{r_1,j+1}}. 
  \end{align*}
  Then the normalizing constant is expressed as 
  \begin{align*}
    Z(\b ;p)=\prod_{i=1}^{k}p_{i1}^{-\a_i} \cdot 
    \prod_{j=1}^{n}p_{r_1,j+1}^{\a_{k+j}} \cdot 
    p_{r_1 1}^{\sum_{i=1}^{k}\a_i +\a_{k+n+1}} 
    \cdot S(\a ;x). 
  \end{align*}
\end{proposition}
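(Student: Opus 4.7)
The plan is to establish a bijective correspondence between contingency tables $u$ with marginal sums $\beta$ and arbitrary matrices $m=(m_{ij})\in M(k,n;\Z_{\ge 0})$ by setting $m_{ij}:=u_{i,j+1}$ for $1\le i\le r_1-1=k$, $1\le j\le r_2-1=n$, and using the marginal constraints to recover the remaining entries. Explicitly, first I would solve
\begin{align*}
u_{i,1}&=\b_i^{(1)}-\sum_{j=1}^{n}m_{ij}=-\a_i-\sum_{j=1}^{n}m_{ij}\quad(1\le i\le k),\\
u_{r_1,j+1}&=\b_{j+1}^{(2)}-\sum_{i=1}^{k}m_{ij}=\a_{k+j}-\sum_{i=1}^{k}m_{ij}\quad(1\le j\le n),\\
u_{r_1,1}&=\b_{r_1}^{(1)}-\sum_{j=1}^{n}u_{r_1,j+1},
\end{align*}
using $\sum_{j=0}^{k+n+1}\a_j=0$ (which holds because $\sum_i\b_i^{(1)}=\sum_j\b_j^{(2)}$) to rewrite the last line as $u_{r_1,1}=\sum_{i=1}^{k}\a_i+\a_{k+n+1}+\sum_{i,j}m_{ij}$.

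Second, I would substitute these expressions into the monomial $\prod_{i,j}p_{ij}^{u_{ij}}$ appearing in $Z(\b;p)$. Grouping the factors coming from the four blocks (interior entries, first column, last row except corner, and the $(r_1,1)$ corner), every $p_{i1}$, $p_{r_1,j+1}$, and $p_{r_1,1}$ contribution splits into a $\b$-dependent prefactor and an $m$-dependent part. The $m$-dependent part combines with $\prod_{i,j}p_{i,j+1}^{m_{ij}}$ to yield exactly $\prod_{i,j}x_{ij}^{m_{ij}}$ with $x_{ij}=p_{i,j+1}p_{r_1,1}/(p_{i,1}p_{r_1,j+1})$, while the $\b$-dependent prefactor is precisely the claimed product $\prod_i p_{i1}^{-\a_i}\prod_j p_{r_1,j+1}^{\a_{k+j}}\,p_{r_1,1}^{\sum\a_i+\a_{k+n+1}}$.

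Third, I would match the denominator $\prod_{i,j}\G(u_{ij}+1)$ of each term of $Z(\b;p)$ with $\G_m(\a)$. The interior factors $\prod_{i=1}^{k}\prod_{j=1}^{n}\G(u_{i,j+1}+1)=\prod_{i,j}\G(m_{ij}+1)$; the first-column factors give $\prod_{i=1}^{k}\G(-\a_i-\sum_j m_{ij}+1)$; the last-row factors give $\prod_{j=1}^{n}\G(\a_{k+j}-\sum_i m_{ij}+1)$; and the $(r_1,1)$ entry gives $\G(\sum_{i=1}^{k}\a_i+\a_{k+n+1}+\sum_{i,j}m_{ij}+1)$. These are exactly the four groups of factors in $\G_m(\a)$.

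The only subtle point—what I expect to be the main obstacle—is that the sum defining $S(\a;x)$ ranges over all $m\in M(k,n;\Z_{\ge 0})$, while the sum for $Z(\b;p)$ is restricted to genuine tables (all $u_{ij}\ge 0$). This is resolved by observing that whenever $m$ produces a negative value for some $u_{i,1}$, $u_{r_1,j+1}$, or $u_{r_1,1}$, the corresponding Gamma factor in $\G_m(\a)$ has its argument at a non-positive integer, so $1/\G_m(\a)=0$ and the term drops out of $S(\a;x)$. Hence the correspondence $m\leftrightarrow u$ identifies the two sums term by term, and pulling out the $\b$-prefactor gives the claimed identity.
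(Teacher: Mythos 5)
Your proof is correct and takes essentially the same route as the paper: the paper's decomposition $u=u_0+\sum_{i,j}m_{ij}\ell_{ij}$ is exactly your parametrization $m_{ij}=u_{i,j+1}$ with the remaining entries recovered from the marginal sums (using $\sum_j\a_j=0$), and it disposes of the mismatch between genuine tables and arbitrary $m\in M(k,n;\Z_{\geq 0})$ by the same observation that $1/\G(N)=0$ for $N\in\Z_{\leq 0}$ kills the out-of-range terms. The monomial and Gamma-factor bookkeeping you carry out matches the paper's term-by-term identification of $p^u$ with $p^{u_0}\prod_{i,j}x_{ij}^{m_{ij}}$ and of $\prod_{a,b}\G(u_{ab}+1)$ with $\G_m(\a)$.
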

Note that $S(\a ;x)$ in the right-hand side is a polynomial in $x$. 
% $\sum_{i=1}^{k}\a_i +\a_{k+n+1}$ may be a negative integer. 
% In such a case, the term $1/ \G_O(\a)$ in $S(\a;x)$ corresponding to $m=O$ becomes $0$. 
\begin{proof}
  We put 
  \begin{align*}
    u_0:=   
    \begin{array}{|c|c|c|c|}
      \cline{1-4} 
      \b_{1}^{(1)} & 0 & \cdots & 0  \\ \hline 
      \vdots & \vdots &        & \vdots  \\ \hline 
      \b_{r_1-1}^{(1)} & 0 & \cdots & 0  \\ \hline 
      \b_{1}^{(2)}-\sum_{i=1}^{r_1-1}\b_{i}^{(1)} & \b_{2}^{(2)} & \cdots & \b_{r_2}^{(2)}  \\  \hline 
    \end{array} 
    =   
    \begin{array}{|c|c|c|c|}
      \cline{1-4} 
      -\a_{1} & 0 & \cdots & 0  \\ \hline 
      \vdots & \vdots &        & \vdots  \\ \hline 
      -\a_k & 0 & \cdots & 0  \\ \hline 
      \a_{k+n+1}+\sum_{i=1}^{k}\a_{i} & \a_{k+1} & \cdots & \a_{k+n}  \\  \hline 
    \end{array}
    \ ,
  \end{align*}
  \begin{align*}    
    \ell_{ij}:=  
    \begin{array}{|c|c|c|c|c}
      \cline{1-4} 
      \vdots  &        & \vdots & \\ \cline{1-4} 
      -1  & \cdots & 1 &\cdots &i \\ \cline{1-4} 
      \vdots & & \vdots         &   \\ \cline{1-4} 
      1  & \cdots & -1 &\cdots & \\  \cline{1-4} 
      \multicolumn{1}{c}{} 
      &\multicolumn{1}{c}{} & \multicolumn{1}{c}{j+1}& 
      \multicolumn{1}{c}{} & 
    \end{array} \ (\textrm{the other entries are 0}).
  \end{align*}
  The contingency table $u$ with the marginal sums $\b$ is
  expressed as 
  $$
  u=u_0 +\sum_{i=1}^{r_1-1}\sum_{j=1}^{r_2-1} m_{ij} \ell_{ij} ,
  $$
  for some $m=(m_{ij}) \in M(r_1-1,r_2-1;\Z_{\geq 0})=M(k,n;\Z_{\geq 0})$. 
  This $m$ is uniquely determined. 
  If some entries of $u$ are negative integers, 
  then $\prod p_{ij}^{u_{ij}} / \prod \G (u_{ij}+1) =0$, because of 
  $1/ \G(N)=0$ for $N\in \Z_{\leq 0}$.
  We thus have
  \begin{align*}
    Z(\b ;p)
    =\sum_u \frac{p^u}{\prod_{a,b} \G (u_{ab}+1) }
    =\sum_{\substack{u=u_0+\sum_i \sum_j m_{ij}\ell_{ij} \\ m \in M(k,n; \Z )}} 
    \frac{p^u}{\prod_{a,b} \G (u_{ab}+1) }. 
  \end{align*}
  If $u=(u_{ab})=u_0+\sum_i \sum_j m_{ij}\ell_{ij}$, then
  \begin{align*}
    &p^u =p^{u_0} \cdot \prod_{i=1}^k \prod_{j=1}^n x_{ij}^{m_{ij}}
    =\prod_{i=1}^{k}p_{i1}^{-\a_i} \cdot 
    \prod_{j=1}^{n}p_{r_1,j+1}^{\a_{k+j}} \cdot 
    p_{r_1 1}^{\sum_{i=1}^{k}\a_i +\a_{k+n+1}} \cdot 
    \prod_{i=1}^k \prod_{j=1}^n x_{ij}^{m_{ij}}, \\
    &\frac{1}{\prod_{a=1}^{r_1}\prod_{b=1}^{r_2} \G (u_{ab}+1)}
    =\frac{1}{\G_m (\a)} .
  \end{align*}
  Hence, the proposition is proved. 
  % which implies the proposition. 
  % Note that if one of $u_{ij}$ is a negative integer, 
  % then $1/\G_m (\a) =0$. 
\end{proof}

Hereafter, we put 
$\a$, $x$, and $u_0$ as in Proposition \ref{prop:NCtoS} and its proof. 
Then the normalizing constant is expressed as 
$Z(\b ;p)=p^{u_0}\cdot S(\a ;x)$. 

According to \cite{TKT}, the expectation $E[U_{ij}]$ of the $(i,j)$-cell is given as
$$
p_{ij} \frac{\pa}{\pa p_{ij}}\log Z
=\frac{1}{Z} \cdot p_{ij} \frac{\pa Z}{\pa p_{ij}} .
$$
It is known that the expectations are functions in $x$ 
(see also Corollary \ref{cor:euler}). 
Further, the values 
$\pa E[U_{ij}] / \pa x_{i'j'}$
are important to solve the conditional maximal likelihood estimate problem. 
We express the expectations and their derivatives by 
entries of $\bS (\a;x)$ and $\dx \bS (\a ;x)$. 
Recall that $\bS(\a;x)$ is aligned by the elements of $\dot{\cJ}=\under{k+n+1}{\cJ}{0}$. 
For $J\in \dot{\cJ}$, 
we call the entry of $\bS(\a;x)$ corresponding to $J$ 
by the {\it $J$-entry}. 
\begin{cor}
  \label{cor:euler}
  For $1\leq i, i' \leq k$ and $1 \leq j ,j' \leq n$, 
  let $S_{ij}$ be 
  the $\under{i}{\dot{J}}{k+j}$-entry of $\bS(\a;x)$, 
  and let $S_{(ij)(i'j')}$ be the $\under{i}{\dot{J}}{k+j}$-entry of
  the coefficient of $\dd x_{i'j'}$ in $\dx \bS (\a;x)=\Psi (\a; x)\bS(\a;x)$. 
  We denote the $(i,j)$-entry of $u_0$ by $(u_0)_{ij}$. 
  Then we have 
  \begin{align*}
    E[U_{ij}] &=(u_0)_{ij} 
    +\frac{(-1)^k}{S}\sum_{a=1}^{k} \sum_{b=1}^{n} (-1)^a (\ell_{ab})_{ij} \a_{k+b} S_{ab},  \\
    \frac{\pa E[U_{ij}]}{\pa x_{i'j'}}
    &=\frac{(-1)^k}{S^2}
    \sum_{a=1}^{k} \sum_{b=1}^{n}(-1)^a (\ell_{ab})_{ij} \a_{k+b}
    \left( S\cdot S_{(ab)(i'j')} -(-1)^{k-i'}\frac{\a_{k+j'}}{x_{i'j'}}\cdot S_{ab}\cdot S_{i'j'}  \right), 
  \end{align*}
  where 
  \begin{align*}
    (\ell_{ab})_{ij} =\left\{
      \begin{array}{cl}
        1 & \textrm{if} \ (i,j)=(a,b+1) \ \textrm{or} \ (r_1 ,1), \\
        -1& \textrm{if} \ (i,j)=(a,1) \ \textrm{or} \ (r_1 ,b+1), \\
        0 & \textrm{otherwise}.
      \end{array}
    \right.
  \end{align*}
\end{cor}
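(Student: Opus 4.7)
The plan is to start from the identity $Z(\b ;p)=p^{u_0}\cdot S(\a ;x)$ established in Proposition~\ref{prop:NCtoS} and perform logarithmic differentiation. Writing $\log Z = u_0\cdot \log p + \log S$ gives
\begin{align*}
  E[U_{ij}]=p_{ij}\frac{\pa \log Z}{\pa p_{ij}}
  =(u_0)_{ij}+\frac{p_{ij}}{S}\frac{\pa S}{\pa p_{ij}},
\end{align*}
so the term $(u_0)_{ij}$ is immediate and the task is to re-express $(p_{ij}/S)\pa_{p_{ij}}S$ through entries of the G-M vector $\bS(\a ;x)$.

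Next I would use the chain rule through the variables $x_{ab}$. From $x_{ab}=p_{a,b+1}p_{r_1 1}/(p_{a1}p_{r_1,b+1})$ one sees that $\log x_{ab}=\sum_{i,j}(\ell_{ab})_{ij}\log p_{ij}$, so $p_{ij}\,\pa_{p_{ij}}x_{ab}=(\ell_{ab})_{ij}\,x_{ab}$; this reduces the problem to evaluating $x_{ab}\,\pa_{x_{ab}} S$. To do this, I would invoke the single-step formula (\ref{eq:nabla}),
\begin{align*}
  \naa_{ab}(\f\la \dot J\ra)=\frac{\a_{k+b}}{|\tx\la \under{a}{\dot J}{k+b}\ra|}\,\f\la \under{a}{\dot J}{k+b}\ra,
\end{align*}
together with a cofactor expansion of the almost-identity matrix $\tx\la \under{a}{\dot J}{k+b}\ra$, which gives $|\tx\la \under{a}{\dot J}{k+b}\ra|=x_{ab}$. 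Since the sorted index in $\dot\cJ$ is obtained from $\under{a}{\dot J}{k+b}$ by moving $k+b$ past positions $a+1,\ldots,k$, both $\f\la\cdot\ra$ and the diagonal entry of $G(\a;x)$ pick up a sign $(-1)^{k-a}$. Combining these yields the key identification
\begin{align*}
  S_{ab}=(-1)^{k-a}\frac{x_{ab}}{\a_{k+b}}\,\frac{\pa S}{\pa x_{ab}},\qquad
  x_{ab}\frac{\pa S}{\pa x_{ab}}=(-1)^{k-a}\a_{k+b}\,S_{ab}.
\end{align*}
Substituting into the expression for $E[U_{ij}]$ and splitting $(-1)^{k-a}=(-1)^k(-1)^a$ produces the displayed formula.

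For $\pa E[U_{ij}]/\pa x_{i'j'}$ I would simply differentiate the just-obtained formula by the quotient rule. The two $x$-derivatives that appear are both read off from the Pfaffian equation $\dx \bS=\Psi(\a;x)\bS$ of Corollary~\ref{cor:pfaffian}: on one hand $\pa_{x_{i'j'}}S_{ab}$ is the $\under{a}{\dot J}{k+b}$-entry of the coefficient of $\dd x_{i'j'}$ in $\Psi(\a;x)\bS(\a;x)$, that is, $S_{(ab)(i'j')}$; on the other hand $\pa_{x_{i'j'}}S$ is the specialisation $(a,b)=(i',j')$ of the identification above, namely $(-1)^{k-i'}\a_{k+j'}S_{i'j'}/x_{i'j'}$. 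Substituting both and collecting terms gives the second formula of the corollary.

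The main obstacle is the careful sign bookkeeping for $(-1)^{k-a}$ (and $(-1)^{k-i'}$), which arises from reordering $\under{a}{\dot J}{k+b}$ to its lexicographic position in $\dot\cJ$ so that $\f\la\cdot\ra$ and the factor in $G(\a;x)$ combine correctly; everything else is routine chain, product, and quotient rule manipulations together with the Pfaffian equation.
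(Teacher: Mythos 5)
Your proposal is correct and follows essentially the same route as the paper: logarithmic differentiation of $Z=p^{u_0}S$, the chain rule via $x_{ab}=p^{\ell_{ab}}$, the key identification $S_{ab}=(-1)^{k-a}\frac{x_{ab}}{\a_{k+b}}\frac{\pa S}{\pa x_{ab}}$ obtained from (\ref{eq:nabla}) with the reordering sign, and then the quotient rule together with $S_{(ab)(i'j')}=\pa S_{ab}/\pa x_{i'j'}$ for the second formula. The only nitpick is the phrasing that ``both'' $\f\la\cdot\ra$ and the $G(\a;x)$ entry pick up $(-1)^{k-a}$ — it is one and the same sign, appearing once in the final identity, exactly as in the paper's proof.
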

\begin{proof}
  As mentioned in Section \ref{sec:pfaffian}, 
  the $\under{i}{\dot{J}}{k+j}$-entry of $\bS(\a;x)$ is
  \begin{align*}
    S_{ij} &={\rm sgn}
    \begin{pmatrix}
      0 & \cdots & i-1 & i+1 & i+2 &\cdots &k &k+j \\
      0 & \cdots & i-1 & k+j & i+1 &\cdots &k-1&k  
    \end{pmatrix}
    \cdot \frac{|\tx \la \under{i}{\dot{J}}{k+j} \ra|}{\a_{k+j}}\frac{\pa S(\a;x)}{\pa x_{ij}} \\
    &=(-1)^{k-i} \cdot \frac{x_{ij}}{\a_{k+j}}\frac{\pa S(\a;x)}{\pa x_{ij}} .
  \end{align*}
  % By results in Section \ref{sec:pfaffian}, we have
  % \begin{align*}
  %   S_{ij} =\frac{|\tx \la \under{i}{\dot{J}}{k+j} \ra|}{\a_{k+j}}\frac{\pa S(\a;x)}{\pa x_{ij}}
  %   =\frac{x_{ij}}{\a_{k+j}}\frac{\pa S(\a;x)}{\pa x_{ij}} ,\quad
  %   S_{(ij)(i'j')}=\frac{\pa S_{ij}}{\pa x_{i'j'}} .
  % \end{align*}
  Since $x_{ab}=p^{\ell_{ab}}=p_{a,b+1}p_{r_1 1}p_{a1}^{-1}p_{r_1,b+1}^{-1}$, we obtain 
  \begin{align*}
    E[U_{ij}]
    &=\frac{1}{Z} \cdot p_{ij}\frac{\pa Z}{\pa p_{ij}} 
    =\frac{1}{p^{u_0} S(\a ;x)} \cdot p_{ij}\frac{\pa}{\pa p_{ij}} (p^{u_0} S(\a ;x)) \\
    &=(u_0)_{ij} 
    +\frac{1}{S}\sum_{a=1}^{k} \sum_{b=1}^{n} p_{ij}\frac{\pa x_{ab}}{\pa p_{ij}}\frac{\pa S}{\pa x_{ab}} \\
    &=(u_0)_{ij} 
    +\frac{1}{S}\sum_{a=1}^{k} \sum_{b=1}^{n} (\ell_{ab})_{ij}\cdot x_{ab} \frac{\pa S}{\pa x_{ab}} \\
    &=(u_0)_{ij} 
    +\frac{1}{S}\sum_{a=1}^{k} \sum_{b=1}^{n} (\ell_{ab})_{ij} \cdot (-1)^{k-a}\a_{k+b} S_{ab} .  
  \end{align*}
  We can easily obtain the second equality from the first one
  by using $S_{(ij)(i'j')}=\pa S_{ij} / \pa x_{i'j'}$. 
  % \begin{align*}    
  %   \frac{\pa E[U_{ij}]}{\pa x_{i'j'}}
  %   &=\frac{1}{S^2}\sum_{a=1}^{k} \sum_{b=1}^{n} \e_{(ij)(ab)} \a_{k+b} 
  %   \left( \frac{\pa S_{ab}}{\pa x_{i'j'}} \cdot S 
  %     -S_{ab} \cdot \frac{\pa S}{\pa x_{i'j'}}\right) \\
  %   &=\frac{1}{S^2}
  %   \sum_{a=1}^{k} \sum_{b=1}^{n} \e_{(ij)(ab)} \a_{k+b}
  %   \left( S\cdot S_{(ab)(i'j')} -\frac{\a_{k+j'}}{x_{i'j'}}\cdot S_{ab}\cdot S_{i'j'}  \right).   
  % \end{align*}
\end{proof}
\begin{remark}
  By using this corollary and the chain rule, 
  we can also obtain 
  the gradient $(\pa Z/\pa p_{ij})_{i,j}$ and 
  the Hessian $(\pa^2 Z/\pa p_{ij}\pa p_{i'j'})_{(i,j),(i',j')}$ 
  from $\bS (\a;x)$ and $\dx \bS (\a;x)$. 
\end{remark}
By Proposition \ref{prop:NCtoS} and Corollary \ref{cor:euler}, 
we can reduce the numerical evaluation of the normalizing constant, 
the expectations and their derivatives 
to that of $\bS (\a ;x)$ and $\dx \bS (\a; x)$. 
To apply our results, 
we use the following lemma. 

\begin{lemma}
  % Let $\wt{\a}^0 =(\a_1^0 ,\ldots ,\a_{k+n+1}^0) \in \Z^{k+n+1}$. 
  % with 
  % \begin{align}
  %   \label{eq:condition-alpha}
  %   \a_i^0 <0 ,\quad 
  %   \a_{k+j}^0 >0 \quad (1\leq i \leq k ,\ 1\leq j \leq n) ,\quad 
  %   \sum_{j=1}^{k+n+1} \a_j >0.
  % \end{align}
  % For some $x^0 \in \C^m$, the series $S(\a ;x) \ (\a_0 =-\sum_{j=1}^{k+n+1} \a_j)$ 
  For any $\check{\a}^0 =(\a_1^0 ,\ldots ,\a_{k+n+1}^0) \in \C^{k+n+1}$,  
  the series $S(\a ;x) \ (\a_0 =-\sum_{j=1}^{k+n+1} \a_j)$ 
  as a function in 
  $(\a_1 ,\ldots ,\a_{k+n+1} ;x) \in \C^{k+n+1} \times \C^{kn}$ 
  converges uniformly on a small neighborhood of $(\check{\a}^0 ;O)$. 
\end{lemma}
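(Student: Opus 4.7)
The plan is to rewrite $S(\a;x)$ as a Horn-type hypergeometric series with Pochhammer symbols in the numerator, and then verify convergence by a direct majorant argument. First, I would apply the standard identity
\[
\frac{1}{\G(1-a-N)}=\frac{(-1)^N (a)_N}{\G(1-a)}\qquad (N\in \Z_{\geq 0})
\]
to the factors $\G(-\a_i-M_i+1)$ with $M_i:=\sum_j m_{ij}$ (taking $a=\a_i$) and $\G(\a_{k+j}-N_j+1)$ with $N_j:=\sum_i m_{ij}$ (taking $a=-\a_{k+j}$), together with the identity $\G(c+|m|+1)=(c+1)_{|m|}\G(c+1)$ where $c:=\sum_{i=1}^k\a_i+\a_{k+n+1}$ and $|m|:=\sum_{i,j}m_{ij}$. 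Since $\sum_i M_i=\sum_j N_j=|m|$, the signs collapse and one obtains
\[
S(\a;x)=\frac{1}{\G_0(\a)}\sum_{m\in M(k,n;\Z_{\geq 0})}\frac{\prod_i (\a_i)_{M_i}\cdot \prod_j (-\a_{k+j})_{N_j}}{(c+1)_{|m|}\prod_{i,j} m_{ij}!}\prod_{i,j}x_{ij}^{m_{ij}},
\]
where $\G_0(\a):=\prod_i \G(1-\a_i)\cdot\prod_j \G(1+\a_{k+j})\cdot \G(c+1)$ is $m$-independent.

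Next, I would fix a small compact neighborhood $K\subset\C^{k+n+1}$ of $\check{\a}^0$ on which $1/\G_0(\a)$ is bounded. Using the elementary estimate $|(\a)_M|\leq (|\a|)_M$ and Stirling's formula, I would establish uniform estimates on $K$ of the form
\[
|(\a_i)_{M_i}|\leq C\, M_i!\,(M_i+1)^R,\quad |(-\a_{k+j})_{N_j}|\leq C\, N_j!\,(N_j+1)^R,\quad \frac{|m|!}{|(c+1)_{|m|}|}\leq C\,(|m|+1)^R,
\]
for constants $C,R>0$ depending only on $K$. Plugging these in, the modulus of the $m$-th term is majorized by
\[
C'\,(|m|+1)^E\cdot \frac{\prod_i M_i!}{\prod_{i,j} m_{ij}!}\cdot \frac{\prod_j N_j!}{|m|!}\cdot \prod_{i,j}|x_{ij}|^{m_{ij}}
\]
for some exponent $E=E(K)>0$. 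The two combinatorial quotients are then controlled by the multinomial inequalities $\prod_i M_i!/\prod_{i,j}m_{ij}!=\prod_i\binom{M_i}{m_{i1},\ldots,m_{in}}\leq n^{|m|}$ and $\prod_j N_j!/|m|!=1/\binom{|m|}{N_1,\ldots,N_n}\leq 1$.

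Combining yields the bound $C'(|m|+1)^E n^{|m|}\prod_{i,j}|x_{ij}|^{m_{ij}}$ for the $m$-th term. Restricting to $\max_{i,j}|x_{ij}|<1/(2n)$ and grouping by $|m|=N$ reduces the majorant to $\sum_N (N+1)^E 2^{-N}\cdot\#\{m:|m|=N\}$, which converges since $\#\{m:|m|=N\}=\binom{N+kn-1}{kn-1}$ is polynomial in $N$. This delivers uniform absolute convergence of $S(\a;x)$ on $K\times\{\max_{i,j}|x_{ij}|<1/(2n)\}$, which is what the lemma demands. The main obstacle is the apparent factorial mismatch in step 2: the numerator naively carries factorial size $\prod_i M_i!\cdot\prod_j N_j!\sim (|m|!)^2$, while the denominator only supplies $\prod_{i,j}m_{ij}!$ together with one $|m|!$ hidden inside $(c+1)_{|m|}$; it is precisely the double multinomial collapse in step 3 that dissolves this apparent divergence into the benign factor $n^{|m|}$.
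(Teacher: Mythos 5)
Your overall strategy (convert $1/\G_m(\a)$ into Pochhammer symbols and then tame the apparent factorial excess with the multinomial bounds $\prod_i M_i!/\prod_{i,j}m_{ij}!\le n^{|m|}$ and $\prod_j N_j!/|m|!\le 1$, followed by a Weierstrass $M$-test) is sound in spirit; the paper itself gives no details and simply refers to \cite[Lemma 7.3]{G-FD}. However, there is a genuine gap in your second step, namely the claimed uniform estimate $|m|!/|(c+1)_{|m|}|\le C(|m|+1)^R$ on a neighborhood $K$ of $\check{\a}^0$. The factorization $\G(c+|m|+1)=(c+1)_{|m|}\G(c+1)$ places the Pochhammer symbol $(c+1)_{|m|}$ in the denominator, and if $c^0:=\sum_{i=1}^k\a_i^0+\a_{k+n+1}^0$ is a negative integer, then $(c+1)_{|m|}$ vanishes at points of every neighborhood of $\check{\a}^0$ for all $|m|\ge -c^0$; hence $|m|!/|(c+1)_{|m|}|$ is unbounded (indeed infinite at some parameters) on $K$, and the rewritten series itself is not even defined there, because the compensating zero coming from $1/\G(c+1)$ has been pulled out into the $m$-independent prefactor $1/\G_0(\a)$, which you only bound by a constant. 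The lemma is stated for arbitrary $\check{\a}^0$, and this degenerate case is precisely the one needed for the application: for the integer parameters of Proposition \ref{prop:NCtoS} one has $c=(u_0)_{r_1 1}$, which may well be negative --- in the paper's own $3\times 3$ example, $\a=(-3,-2,-3,3,4,1)$ gives $c=\a_1+\a_2+\a_5=-4$. (The analogous rewritings for the $\a_i$ and $\a_{k+j}$ factors are harmless, since there the Pochhammer symbols land in the numerator and both sides are entire.)

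The repair is small: do not split off $\G(c+1)$. Keep the factor $1/\G(c+|m|+1)$ intact and prove instead the uniform bound $\G(|m|+1)/|\G(c+|m|+1)|\le C(|m|+1)^{R}$ for $c$ ranging over a compact set: for $|m|$ beyond some threshold depending on $K$ the argument $c+|m|+1$ lies in a fixed right half-plane and Stirling's formula gives the bound, while the finitely many remaining values of $|m|$ are handled by continuity of the entire function $1/\G$ and compactness of $K$. With this replacement, the rest of your argument --- the bound $|(\a)_M|\le(|\a|)_M\le CM!(M+1)^R$, the two multinomial inequalities, and uniform absolute convergence on $K\times\{\max_{i,j}|x_{ij}|<1/(2n)\}$ --- goes through for every $\check{\a}^0$, including the integer parameters used in Section \ref{sec:NC}.
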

\begin{proof}
  This lemma can be shown in a similar way 
  to \cite[Lemma 7.3]{G-FD}. 
\end{proof}

We put $\a^0:=(-\sum_{j=1}^{k+n+1} \a_j^0, \a_1^0 ,\ldots ,\a_{k+n+1}^0)$. 
% Under the condition (\ref{eq:condition-alpha}), 
If $\a_i^0 \in \Z_{<0}$ ($1\leq i \leq k$) and $\a_{k+j}^0 \in \Z_{>0}$ ($1\leq j \leq n$),
then the series $S(\a^0 ;x)$
becomes a polynomial in $x_{ij}$ ($1\leq i \leq k ,\ 1\leq j \leq n$). 
This lemma implies that 
$\lim_{\a \to \a^0} S(\a ;x)$ coincides with the polynomial $S(\a^0 ;x)$, 
for $x$ in a small neighborhood of the zero matrix $O$. 
In a similar way, we can show that 
the partial derivatives of $S(\a ;x^0)$ 
converge to those of the the polynomial $S(\a^0 ;x)$, as $\a \to \a^0$. 
By the identity theorem for holomorphic functions, 
we obtain the following corollary. 

\begin{cor}
  Let $\a$ be the integer vector defined in Proposition \ref{prop:NCtoS}.  
  % If both $\a$ and $a^{(i)}$ satisfy the condition (\ref{eq:condition-alpha}), 
  Then the relations in 
  Corollaries \ref{cor:pfaffian} and \ref{cor:contiguity} hold, 
  as those between the vectors consisting of polynomials in $x$. 
\end{cor}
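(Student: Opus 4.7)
The plan is to deduce the corollary from Corollaries \ref{cor:pfaffian} and \ref{cor:contiguity} by an identity-theorem argument, specializing from generic $\a$ to the integer vector $\a^0$ defined in Proposition \ref{prop:NCtoS}, as hinted at in the paragraph preceding the statement.

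By Corollaries \ref{cor:pfaffian} and \ref{cor:contiguity}, the Pfaffian equation and the contiguity identities in question already hold on the dense open subset of a small neighborhood of $\a^0$ in which every component $\a_j$ lies in $\C-\Z$. The strategy is to show that both sides of each identity depend holomorphically on $\a$ near $\a^0$ for $x$ in a neighborhood of the zero matrix $O$, so that the identity theorem transports each equality to $\a=\a^0$; a second identity-theorem step in $x$ then extends the resulting relations from a neighborhood of $O$ to all of $X$.

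For the left-hand sides, the preceding lemma provides uniform convergence of $S(\a;x)$ on a neighborhood of $(\a^0;O)$, so $S(\a;x)$ and its partial derivatives in $x$ (computed term by term) depend holomorphically on $\a$ near $\a^0$; consequently $\bS(\a;x)$, $\dx \bS(\a;x)$, and $\bS(\a^{(i)};x)$ are all holomorphic in $\a$ at $\a^0$. For the right-hand sides, I would inspect the explicit formulas in Theorems \ref{th:Psi-expression} and \ref{th:contiguity}: the entries of $\Psiax$ and $\conti{i}(\a;x)$ are rational in $\a$, and for the particular $\a^0$ coming from the contingency table (with negated row sums and positive column sums, hence all entries nonzero integers), the obvious denominator factors $\a_j$ do not vanish at $\a^0$. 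One is then reduced to verifying that the less obvious denominators arising from matrix inversions — such as $\det C(\a)$, $\det P_i(\a^{(i)})$, and $v_J C(\a)\tr v_J^\vee$ — are likewise nonzero at $\a^0$.

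The main obstacle I anticipate is controlling these hidden poles. The cleanest way to sidestep the issue is to multiply each of the three relations through by an appropriate polynomial denominator in $\a$ in advance, turning each into a polynomial identity in $\a$ whose coefficients are rational functions of $x$. Such a polynomial identity, having been verified for all $\a$ in the dense open set above, holds identically in $\a$, and can be safely specialized at $\a=\a^0$; dividing back by the chosen factor, which is nonvanishing at $\a^0$ by the integer-value constraints, recovers the original relations. Since both sides are then rational in $x$ and agree on a neighborhood of $O$, the identity theorem for polynomials gives the desired equalities on all of $X$.
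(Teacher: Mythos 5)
Your overall route is the same as the paper's: the paper derives this corollary from the uniform-convergence lemma immediately preceding it (so that $S(\a;x)$ and its $x$-derivatives, hence $\bS(\a;x)$, $\dx\bS(\a;x)$ and $\bS(\a^{(i)};x)$, converge to the corresponding polynomial quantities as $\a$ tends to the integer vector of Proposition \ref{prop:NCtoS}) together with the identity theorem in the parameters, the relations being already known for $\a\in(\C-\Z)^{k+n+2}$ by Corollaries \ref{cor:pfaffian} and \ref{cor:contiguity}. Your additional remark that one then extends in $x$ from a neighborhood of $O$ to all of $X$ is consistent with the paper's (implicit) use of the fact that both sides are polynomials, respectively rational, in $x$.

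The gap is in your handling of the ``hidden'' denominators. Clearing denominators and then ``dividing back by the chosen factor, which is nonvanishing at $\a^0$ by the integer-value constraints'' is circular: the nonvanishing of $\det C(\a)$, $\det P_i(\a^{(i)})$ and the analogous determinants entering $\Psiax$ and $\conti{i}(\a;x)$ at the integer parameter is exactly what you flagged as needing verification, and it does not follow from the individual $\a_j$ being nonzero integers --- by Fact \ref{fact:intersection} the entries of $C(\a)$ involve sums $\a_J=\sum_{j\in J}\a_j$, which can very well vanish at integer points, so a naive appeal to ``integer-value constraints'' proves nothing about these determinants. The paper closes this gap structurally: Proposition \ref{prop:inverse} in Appendix \ref{appendix} expresses the inverse of any intersection matrix $C_{(p_1q_1)(p_2q_2)}(\a)$ as a product of an intersection matrix and the inverses of two diagonal matrices whose entries are $(\tpi)^k/\prod_i\a_i$, hence well defined and invertible whenever no $\a_i$ vanishes; this yields Lemma \ref{lem:invertible}, which guarantees that $\Psiax$ and $\conti{i}(\a;x)$ (the right-hand sides of the relations) are well defined, and $\conti{i}(\a;x)$ invertible, at the statistical parameter. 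Note also that the only potentially dangerous scalar $v_JC(\a)\tr v_J^{\vee}$, which is $(\tpi)^k\a_J/\prod_{j\in J}\a_j$, is already cancelled in the final expression (\ref{eq:psi-expression}) of Proposition \ref{prop:MJ-expression}, so once invertibility of the intersection matrices is secured there are no further poles. Your argument needs Proposition \ref{prop:inverse} (or an equivalent nonvanishing statement) to be complete.
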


By Proposition \ref{prop:inverse} in Appendix \ref{appendix}, 
we obtain the following lemma. 
\begin{lemma}
  \label{lem:invertible}
  If none of $\a_i \ (0\leq i \leq k+n+1)$ and 
  $|\tx \la J \ra| \ (J \in \cJ)$ is zero, 
  then the matrices $\Psiax$ and $\conti{i} (\a ;x)$ are well-defined. 
  Further, $\conti{i} (\a ;x)$ is invertible. 
\end{lemma}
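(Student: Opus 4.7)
The plan is to read both assertions off the explicit formulas already proved for the two matrices, and to check factor-by-factor where denominators can vanish.

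First I would treat $\Psi(\a;x)$. Theorem \ref{th:Psi-expression} gives the decomposition
$\Psi(\a;x) = \sum_{J\in\cJ^\circ} M_J(\a)\,\dx\log|\tx\la J\ra|$,
and Proposition \ref{prop:MJ-expression} expresses $M_J(\a)$ as an explicit rational function of the entries of $C(\a)$ and of the row vector $v_J$, which is in turn built out of intersection numbers of $\f\la J\ra$ with the basis $\{\f\la J^p\ra\}$ and $C(\a)^{-1}$. By Fact \ref{fact:intersection} every entry of $C(\a)$ and of the vectors $(\cI(\f\la J\ra,\f\la J^p\ra))_{p}$ is a rational function of $\a$ whose denominator is a product of the $\a_j$'s, so these are finite as soon as each $\a_j\ne 0$. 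For $C(\a)^{-1}$ I would appeal to Proposition \ref{prop:inverse} in the appendix, which provides the explicit inverse under the same hypothesis. Finally, $\dx\log|\tx\la J\ra|$ is well-defined precisely when $|\tx\la J\ra|\ne 0$. Combining these points yields the well-definedness of $\Psi(\a;x)$.

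For $U_i(\a;x)$ I would start from Theorem \ref{th:contiguity}, which writes
$U_i(\a;x)=C(\a^{(i)})P_i(\a^{(i)})^{-1}D_i(x)Q_i(\a)C(\a)^{-1}$.
Each of the five factors is then inspected separately. The diagonal factor $D_i(x)$ has entries $|\tx\la J\ra|/|\tx\la \under{i}{J}{0}\ra|$, which are defined and \emph{nonzero} precisely when no $|\tx\la J\ra|$ vanishes; in particular $D_i(x)$ is invertible. The intersection matrices $C(\a),C(\a^{(i)}),P_i(\a^{(i)}),Q_i(\a)$ have entries given by Fact \ref{fact:intersection}, and their invertibility follows from Proposition \ref{prop:inverse} of the appendix under the nonvanishing of the $\a_j$'s (and of the shifted parameters $\a_j^{(i)}$, which is where I will need to read off the precise hypothesis of that proposition). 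The product of invertible matrices being invertible, $U_i(\a;x)$ is well-defined and invertible.

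As a sanity check on the invertibility, I would also note that the Corollary after Theorem \ref{th:contiguity-intersection} already gives $U_i^{-1}$ cohomologically as $[\f]\mapsto[L_i^{-1}\cdot\f]_i$, together with a formula of the same shape as $U_i$ itself, so invertibility is conceptually automatic once one knows the cohomological description descends to a well-defined matrix on the chosen frame.

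The main obstacle I anticipate is the bookkeeping around the shifted parameter vector $\a^{(i)}$: although the hypothesis says only that the original $\a_j$'s are nonzero, applying Proposition \ref{prop:inverse} to $C(\a^{(i)})$ and $P_i(\a^{(i)})$ requires that the relevant combinations of $\a_j^{(i)}$'s (in particular $\a_0-1$ and $\a_i+1$ and the partial sums that appear in denominators of Fact \ref{fact:intersection}) be nonzero as well. I expect this to follow from the explicit denominator structure recorded in Proposition \ref{prop:inverse}, but verifying it carefully is where the real work sits; everything else is simply substitution into the formulas already established.
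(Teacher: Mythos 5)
Your factor-by-factor inspection is exactly the paper's argument: the paper's entire proof is the one-line appeal to Proposition \ref{prop:inverse}, i.e.\ well-definedness of $\Psiax$ is read off (\ref{eq:psi-expression}) and (\ref{eq:pfaffian-expression}) (intersection numbers finite for $\a_j\ne 0$ by Fact \ref{fact:intersection}, $C(\a)^{-1}$ from Proposition \ref{prop:inverse}, $\dx\log|\tx\la J\ra|$ from $|\tx\la J\ra|\ne 0$), and $\conti{i}(\a;x)$ is treated through the factorization of Theorem \ref{th:contiguity}, with $D_i(x)$ invertible because the minors are nonzero and the intersection matrices invertible by Proposition \ref{prop:inverse} (cf.\ Remark \ref{rem:appendix}(2)). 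So in structure your proposal reproduces, and indeed elaborates, the paper's proof.

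The one point to correct is your expectation about the shifted parameters: the nonvanishing of the entries of $\a^{(i)}$ (i.e.\ $\a_0-1\ne 0$ and $\a_i+1\ne 0$) does \emph{not} follow from the hypothesis that the $\a_j$ are nonzero, nor from the denominator structure in Proposition \ref{prop:inverse}. The matrices $C(\a^{(i)})$ and $P_i(\a^{(i)})$ are precisely the matrices of Proposition \ref{prop:inverse} evaluated at $\a^{(i)}$, so their entries have denominators that are products of the \emph{shifted} parameters, and e.g.\ for $\a_0=1$ the diagonal entries of $C(\a^{(i)})$ blow up; no cancellation inside the product $C(\a^{(i)})P_i(\a^{(i)})^{-1}$ is claimed or needed by the paper. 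The lemma is stated loosely on this point: it is only invoked inside Algorithm \ref{algo:NC}, where condition (ii) of Algorithm \ref{algo:path} guarantees that every parameter vector $\a^l$ in the chain has all entries nonzero, so for each contiguity step both the source parameters and the shifted (target) parameters satisfy the hypothesis. Thus your instinct that this is where the care sits is right, but the resolution is to read the hypothesis as holding for $\a^{(i)}$ as well (as the application ensures), rather than to derive it from Proposition \ref{prop:inverse}. With that proviso, the rest of your argument, including the invertibility of $\conti{i}(\a;x)$ as a product of invertible factors, is exactly the intended proof.
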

We explain an algorithm to evaluate the normalizing constant 
by using the contiguity relations. 
We put 
\begin{align*}
  &\a^0:=(1-r_2 ,\underbrace{-1,\ldots ,-1}_{r_1-1} ,\underbrace{1,\ldots ,1}_{r_2-1},r_1-1), \\
  &\de_i :=(\underset{0}{-1} ,\underset{1}{0}, \ldots ,
  \underset{i-1}{0},\underset{i}{1},\underset{i+1}{0},\ldots ,\underset{r_1+r_2-1}{0}), \quad 
  i=1,\ldots ,r_1+r_2-1.
\end{align*}
\begin{algorithm}\label{algo:path}~
  \begin{itemize}
  \item[Input:] a parameter vector $\a =(\a_0 ,\ldots ,\a_{r_1+r_2-1})$ with 
    $$
    \a_{i} \in \Z_{<0} \ (0\leq i \leq r_1-1), \quad 
    \a_{r_1+j} \in \Z_{>0} \ (0\leq j \leq r_2-1) .
    $$ 
  \item[Output:] 
    a sequence $\{ \a^l \}_{l=1}^{e}$ satisfying 
    \begin{enumerate}[{\rm (i)}]
    \item $\a^e=\a$, 
    \item each entry of $\a^l =(\a^l_0 ,\ldots ,\a^l_{r_1+r_2-1})$ is nonzero, 
    \item for $1\leq l \leq e$, the difference $\a^l -\a^{l-1}$ is one of the following: 
      \begin{align*}
        -\de_1 ,\ldots ,-\de_{r_1-1} , 
        \de_{r_1} ,\ldots ,\de_{r_1+r_2-2} ,
        \pm \de_{r_1+r_2-1}. 
      \end{align*}
    \end{enumerate}
  \end{itemize}
  \begin{enumerate}[1.]
  \item Let $\si =(\si_0,\ldots ,\si_{r_1+r_2-1}):=\a -\a^0$. 
    Then $\si_1 ,\ldots, \si_{r_1-1} \in \Z_{\leq 0}$ and 
    $\si_{r_1} ,\ldots, \si_{r_1+r_2-2} \in \Z_{\geq 0}$. 
  \item Let $l:=0$. For $j$ from $0$ to $r_2-2$, 
    while $\a^l_{r_1+j}<\a_{r_1+j}$, do
    \begin{align*}
      \a^{l+1}&\leftarrow \a^l +\de_{r_1+j}, \\
      l& \leftarrow l+1 .
    \end{align*}
  \item If $r_1-1<\a_{r_1+r_2-1}$, 
    while $\a^l_{r_1+r_2-1}<\a_{r_1+r_2-1}$, do
    \begin{align*}
      \a^{l+1}&\leftarrow \a^l +\de_{r_1+r_2-1}, \\
      l& \leftarrow l+1 . 
    \end{align*}
    Else if $r_1-1>\a_{r_1+r_2-1}$, while $\a^l_{r_1+r_2-1}>\a_{r_1+r_2-1}$, do
    \begin{align*}
      \a^{l+1}&\leftarrow \a^l -\de_{r_1+r_2-1}, \\
      l& \leftarrow l+1. 
    \end{align*}
  \item For $i$ from $1$ to $r_1-1$, 
    while $\a^l_{i}>\a_{i}$, do
    \begin{align*}
      \a^{l+1}&\leftarrow \a^l -\de_{i}, \\
      l& \leftarrow l+1 .
    \end{align*}
  \item Return $\a^1 ,\ldots ,\a^{l-1}(=:\a^e )$. 
  \end{enumerate}
\end{algorithm}
% Note that in each step, the $0$-th entry of $\a^l$ is a negative integer. 
% beause it changes monotonously and 
% \begin{align*}
%   \a^2_0 =\a^{in}_0-\sum_{j=r_1}^{r_1+r_2-1}\de_j
%   =\a^{in}_0+\sum_{j=0}^{r_1-1}\de_j
%   =\a_0 +\sum_{j=1}^{r_1-1}\de_j <0. 
% \end{align*}
By (ii) and Lemma \ref{lem:invertible}, 
the matrix $\conti{i}(\a^l ;x)$ is well-defined and invertible  
for $1\leq i \leq r_1+r_2-1$ and $0\leq l \leq e$. 

\begin{algorithm}\label{algo:NC}~
  \begin{itemize}
  \item[Input:] marginal sums $\b$ and probabilities $p$. 
  \item[Output:] the normalizing constant $Z(\b;p)$, 
    the expectations $E[U_{ij}]$, and 
    their derivatives $\pa E[U_{ij}]/ \pa x_{i'j'}$. 
  \end{itemize}
  \begin{enumerate}[1.]
  \item Let $\a$ and $x$ be as in Proposition \ref{prop:NCtoS}. 
  \item By using Algorithm \ref{algo:path}, 
    find a sequence $\{ \a^l \}_{i=1}^e$ satisfying (i), (ii), and (iii). 
  \item Compute $\bS (\a^0 ;x)$ by the definition. 
  \item\label{4} For $l$ from $1$ to $e$, 
    evaluate $\bS (\a^l ;x)$ from $\bS (\a^{l-1} ;x)$, by multiplying 
    $\conti{i}^{\pm 1}$ as Corollary \ref{cor:contiguity}. 
  \item By Proposition \ref{prop:NCtoS} and Corollary \ref{cor:euler}, 
    we obtain the numerical values of $Z$ and $E[U_{ij}]$. 
  \item By the expressions (\ref{eq:psi-expression}) and (\ref{eq:pfaffian-expression}), 
    evaluate $\Psi (\a;x)$ and $\Psi (\a;x) \bS(\a ;x) (=\dx \bS(\a ;x))$. 
  \item By Corollary \ref{cor:euler}, 
    we obtain the numerical values of $\pa E[U_{ij}]/ \pa x_{i'j'}$. 
  \end{enumerate}
\end{algorithm}
\begin{remark}
  Though the evaluation of $\conti{i}^{\pm 1}$ needs 
  the inverse matrices of intersection matrices, 
  these have explicit expression; see Remark \ref{rem:appendix} (1). 
  Thus, it is not hard to evaluate $\conti{i}^{\pm 1}$. 
\end{remark}

\begin{exam}[$r_1=r_2=3$ ($k=n=2$)] 
  We consider $3\times 3$ contingency tables whose marginal sums 
  and probabilities are given as follows, respectively. 
  \begin{align*}
    \begin{array}{|c|c|c|c}
      \cline{1-3} 
      & & &2 \\ \cline{1-3} 
      & & &3 \\ \cline{1-3} 
      & & &3 \\  \cline{1-3} 
      \multicolumn{1}{c}{1}&\multicolumn{1}{c}{3} 
      &\multicolumn{1}{c}{4}  & 8
    \end{array} 
    \qquad 
    p=
    \begin{array}{|c|c|c|}
      \hline 
      1& 1/2& 1/3 \\ \hline 
      1& 1/5& 1/7  \\ \hline 
      1& 1&1  \\  \hline 
    \end{array} 
  \end{align*}
  In this case, 
  the notations appearing in Algorithms \ref{algo:path} and \ref{algo:NC} 
  are as follows: 
  \begin{align*}
    &\a =(-3,-2,-3,3,4,1), \ 
    x_{11}=\frac{1}{2},\ x_{12}=\frac{1}{3}, \ 
    x_{21}=\frac{1}{5},\ x_{22}=\frac{1}{7},  \\
    &\a^0=(-2,-1,-1,1,1,2), \ 
    \si =(-1,-1,-2,2,3,-1), \ 
    e=9. 
    % \\
    % &\a^1= (-7,-1,-1,3,4,2), \ 
    % \a^2= (-6,-1,-1,3,4,1). 
  \end{align*}
  We write down the changes of parameters (see Figure \ref{fig:3x3}). 
  \begin{figure}
    \begin{align*}
      \bS(-2,-1,-1,1,1,2&;x)=\bS(\a^0;x), \\
      \frac{1}{2} \conti{3} (-2,-1,1,1,1,2;x) \times \downarrow & \\
      \bS(-3,-1,-1,2,1,2&;x)=\bS(\a^1;x), \quad \a^1 =\a^0 +\de_3, \\
      \frac{1}{3} \conti{3} (-3,-1,-1,2,1,2;x) \times \downarrow & \\
      \bS(-4,-1,-1,3,1,2&;x)=\bS(\a^2;x), \quad \a^2 =\a^1 +\de_3, \\
      \frac{1}{2} \conti{4} (-4,-1,-1,3,1,2;x) \times \downarrow & \\
      \bS(-5,-1,-1,3,2,2&;x)=\bS(\a^3;x), \quad \a^3 =\a^2 +\de_4, \\
      \frac{1}{3} \conti{4} (-5,-1,-1,3,2,2;x) \times \downarrow & \\
      \bS(-6,-1,-1,3,3,2&;x)=\bS(\a^4;x), \quad \a^4 =\a^3 +\de_4, \\
      \frac{1}{4} \conti{4} (-6,-1,-1,3,3,2;x) \times \downarrow & \\
      \bS(-7,-1,-1,3,4,2&;x)=\bS (\a^5 ;x), \quad \a^5 =\a^4 +\de_4, \\
      2 \conti{5}^{-1} (-6,-1,-1,3,4,1;x) \times \downarrow & \\
      \bS(-6,-1,-1,3,4,1&;x)=\bS (\a^6 ;x), \quad \a^6 =\a^5 -\de_5,\\
      \conti{1}^{-1} (-5,-2,-1,3,4,1;x) \times \downarrow & \\
      \bS(-5,-2,-1,3,4,1&;x)=\bS(\a^7;x), \quad \a^7 =\a^6 -\de_1, \\
      \conti{2}^{-1} (-4,-2,-2,3,4,1;x) \times \downarrow & \\
      \bS(-4,-2,-2,3,4,1&;x)=\bS(\a^8;x), \quad \a^8 =\a^7 -\de_2, \\
      \conti{2}^{-1} (-3,-2,-3,3,4,1;x) \times \downarrow & \\
      \bS(-3,-2,-3,3,4,1&;x)=\bS(\a^9;x)=\bS(\a;x), \quad \a^9 =\a^8 -\de_2. 
    \end{align*}
    \caption{Step \ref{4} in Algorithm \ref{algo:NC}.}
    \label{fig:3x3}
  \end{figure}
  Note that the G-M vector $\bS(\a;x)$ is 
  \begin{align*}
    \bS (\a ;x) &=({\rm constant})\cdot \int_{\De \la 125 \ra} \prod_{j=1}^5 L_j^{\a_j} \cdot \tr \bigl( 
    \f \la 012 \ra , \f \la 013 \ra , \f \la 014 \ra ,  
    \f \la 023 \ra , \f \la 024 \ra , \f \la 034 \ra 
    \bigr) \\
    &=\tr \Bigl(
    S(\a;x) ,
    \frac{x_{21}}{\a_3}\cdot \frac{\pa S(\a;x)}{\pa x_{21}}, 
    \frac{x_{22}}{\a_4}\cdot \frac{\pa S(\a;x)}{\pa x_{22}},
    \\ & \qquad \qquad  
    \frac{-x_{11}}{\a_3}\cdot \frac{\pa S(\a;x)}{\pa x_{11}},
    \frac{-x_{12}}{\a_4}\cdot \frac{\pa S(\a;x)}{\pa x_{12}}, 
    \frac{x_{11}x_{22}-x_{12}x_{21}}{\a_3 \a_4}\cdot \frac{\pa^2 S(\a;x)}{\pa x_{11} \pa x_{22}}
    \bigr) .
  \end{align*}
\end{exam}

\begin{exam}[$r_1=r_2=5$ ($k=n=4$)] 
  We consider the following case.
  \begin{align*}    
    \begin{array}{|c|c|c|c|c|c}
      \cline{1-5} 
      & & & & &400 \\ \cline{1-5} 
      & & & & &410 \\ \cline{1-5} 
      & & & & &711 \\ \cline{1-5} 
      & & & & &250 \\ \cline{1-5} 
      & & & & &250 \\ \cline{1-5} 
      \multicolumn{1}{c}{560}
      &\multicolumn{1}{c}{361} 
      &\multicolumn{1}{c}{550}
      &\multicolumn{1}{c}{350} 
      &\multicolumn{1}{c}{200} & 2021
    \end{array} 
  \end{align*}
  Evaluation of the expectations takes 19003 seconds 
  on our implementation 
  (with Risa/Asir on a machine with an Intel Xeon (2.70GHz) and 256G memory). 
\end{exam}

\begin{acknowledgements}
  The authors are grateful to Professor Nobuki Takayama 
  for posing this problem and for helping the first author to implement 
  the algorithms on Risa/Asir.
  This work was supported by JSPS KAKENHI Grant Numbers 25220001. 
\end{acknowledgements}

\appendix

\section{Inverse of intersection matrices}
\label{appendix}
We regard $\a_i$ as an indeterminant, 
and entries of matrices as elements in the rational function field 
$\C (\a)=\C (\a_0 ,\ldots ,\a_{k+n+1})$ with a relation $\sum_{i=0}^{k+n+1}\a_i=0$.  
For $f(\a) \in \C (\a)$, we denote $f(-\a)$ by $f(\a)^{\vee}$. 
% ${}^{\vee}$ defines involution on $\C (\a)$. 
For a matrix $A\in M(r,r;\C(\a))$ ($r={\binom{k+n}{k}}$), 
let $A^{\vee}$ be the matrix operated ${}^{\vee}$ on each entry of $A$. 
In this appendix, we show the following proposition. 
\begin{proposition}
  \label{prop:inverse}
  For $p_1 \neq q_1$ and $p_2 \neq q_2$, 
  we put 
  $$
  C_{(p_1 q_1)(p_2 q_2)} (\a):=
  \Bigl( \cI (\f \la I \ra , \f \la J \ra ) 
  \Bigr)_{I\in \under{q_1}{\cJ}{p_1},J\in \under{q_2}{\cJ}{p_2}}
  $$
  whose entries are regarded as rational functions of $\a_i$'s. 
  If none of $a_i \in \C (0\leq i \leq k+n+1)$ is zero, 
  then $C_{(p_1 q_1)(p_2 q_2)} (a_0,\ldots ,a_{k+n+1})$ is well-defined and invertible. 
\end{proposition}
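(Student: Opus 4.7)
The argument has two parts: invertibility over the function field $\C(\a)$, and nonvanishing under specialization. Well-definedness is immediate from Fact \ref{fact:intersection}: each entry $\cI(\f\la I\ra,\f\la J\ra)$ is either zero or a rational function of the form $(2\pi\sqrt{-1})^k P(\a)/\prod_{j\in S}\a_j$ with $S\subset\{0,\ldots,k+n+1\}$, so specializing $\a_j\mapsto a_j$ with every $a_j\ne 0$ keeps every entry finite. For invertibility over $\C(\a)$, I would invoke Proposition \ref{prop:basis}: the sets $\{[\f\la I\ra]\}_{I\in\under{q_1}{\cJ}{p_1}}$ and $\{[\f\la J\ra]\}_{J\in\under{q_2}{\cJ}{p_2}}$ are $\C(\a)$-bases of $H^k(\W^\bu(T_x),\na^\a)$ and $H^k(\W^\bu(T_x),\na^{-\a})$ respectively; since the intersection pairing $\cI$ is perfect between these cohomology groups, its representing matrix $C_{(p_1 q_1)(p_2 q_2)}(\a)$ in the chosen bases has nonzero determinant as an element of $\C(\a)$.

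To upgrade this to nonvanishing under the specialization $\a_j\mapsto a_j$, the target is to show that $\det C_{(p_1 q_1)(p_2 q_2)}(\a)$ is a nonzero scalar times a Laurent monomial in the $\a_j$'s, so that it stays nonzero whenever every $a_j\ne 0$. The base case $(p_2,q_2)=(q_1,p_1)$ is the diagonalization carried out inside the proof of Proposition \ref{prop:basis}: after the lexicographic alignment, $C_{(p_1 q_1)(q_1 p_1)}(\a)$ is diagonal with $I$-th entry $(-2\pi\sqrt{-1})^k/\prod_{j\in I\setminus\{p_1\}}\a_j$, so its determinant is visibly of the desired form. For the general case I would factor
\[
C_{(p_1 q_1)(p_2 q_2)}(\a)=C_{(p_1 q_1)(q_1 p_1)}(\a)\cdot \tr M(\a),
\]
where $M(\a)$ is the change-of-basis matrix in $H^k(\W^\bu(T_x),\na^{-\a})$ expressing each class $[\f\la J\ra]$ with $J\in\under{q_2}{\cJ}{p_2}$ in the basis $\{[\f\la J'\ra]\}_{J'\in\under{p_1}{\cJ}{q_1}}$.

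The main obstacle is controlling $\det M(\a)$. I plan to induct on the symmetric difference $|\{p_1,q_1\}\bigtriangleup\{p_2,q_2\}|$, swapping a single label at each step. At each elementary step the two intermediate $\under{q}{\cJ}{p}$-type bases differ only in one index, and the accompanying change-of-basis matrix can be read off from Fact \ref{fact:intersection} applied to the diagonal pair of intersection matrices that link the two bases; its determinant is then an explicit Laurent monomial in the $\a_j$'s. Composing these elementary contributions shows $\det M(\a)$, and hence $\det C_{(p_1 q_1)(p_2 q_2)}(\a)$, is itself a Laurent monomial in the $\a_j$'s, which is nonzero whenever every $a_i\ne 0$.
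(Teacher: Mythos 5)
Your first two steps are fine (well-definedness from Fact \ref{fact:intersection}, and generic invertibility over $\C(\a)$ from Proposition \ref{prop:basis} plus perfectness of $\cI$), but the heart of your argument --- that $\det C_{(p_1 q_1)(p_2 q_2)}(\a)$ is a nonzero constant times a Laurent monomial in the $\a_j$'s --- is not actually established. The elementary step of your induction is circular as stated: the change-of-basis matrix between two ``adjacent'' bases $\{\f\la J\ra\}_{J\in \under{q}{\cJ}{p}}$ and $\{\f\la J\ra\}_{J\in \under{q'}{\cJ}{p'}}$ is \emph{not} read off from diagonal intersection matrices alone. If you compute it as $C(B_2,B^*)\,C(B_1,B^*)^{-1}$ and choose the dual basis $B^*$ so that $C(B_1,B^*)$ is the diagonal matrix of Proposition \ref{prop:basis}, then the other factor $C(B_2,B^*)$ is again a non-diagonal intersection matrix of exactly the type whose determinant you are trying to control; nothing in Fact \ref{fact:intersection} makes it diagonal or visibly triangular, so your induction never gets off the ground without an additional structural lemma (e.g.\ a triangularity statement for these linking matrices, or an explicit determinant formula), which you neither state nor prove. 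The monomial claim itself is plausible --- small cases such as $\det C_{(0,k+n+1)(0,k+n+1)}$ for $(k,n)=(1,1),(1,2),(2,1)$ do give Laurent monomials once the relation $\sum_j\a_j=0$ is used (note that even this relation is essential and does not appear in your sketch) --- but it is a strictly stronger assertion than the proposition needs, and proving it is where all the work lies.

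The paper avoids determinants entirely. Writing $\tr(\ldots,\f\la J\ra,\ldots)_{J\in\under{q}{\cJ}{p}}=A_{pq}\tr(\ldots,\f\la I\ra,\ldots)_{I\in\under{k+n+1}{\cJ}{0}}$ and using bilinearity of $\cI$, one gets $C_{(p_1 q_1)(p_2 q_2)}=A_{p_1 q_1}\,C\;\tr A_{p_2 q_2}^{\vee}$ with $C=C_{(0,k+n+1)(0,k+n+1)}$, and then the identity
\begin{equation*}
C_{(p_1 q_1)(p_2 q_2)}^{-1}
= C_{(q_2 p_2)(p_2 q_2)}^{-1}\, C_{(q_2 p_2)(q_1 p_1)}\, C_{(p_1 q_1)(q_1 p_1)}^{-1}
\end{equation*}
in $M(r,r;\C(\a))$. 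The only matrices inverted on the right are the diagonal ones $C_{(p_1 q_1)(q_1 p_1)}$ and $C_{(q_2 p_2)(p_2 q_2)}$, whose entries are $(\tpi)^k/\prod\a_j$; hence the right-hand side specializes at any $(a_0,\ldots,a_{k+n+1})$ with all $a_i\ne0$, which proves invertibility and even yields the explicit inverse used later in the algorithm. If you want to salvage your route, you must either prove the Laurent-monomial determinant formula directly, or replace the determinant argument by an identity of this kind in which only explicitly invertible (diagonal) matrices ever get inverted.
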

\begin{proof}  
  The well-definedness is clear by Fact \ref{fact:intersection}. 
  We show that the matrix is invertible. 
  For $p \neq q$, there exists an invertible matrix $A_{pq}\in M(r,r;\C(\a))$
  such that 
  $$
  \tr \bigl( \cdots ,\f \la J \ra ,\cdots \bigr)_{J\in \under{q}{\cJ}{p}}
  =A_{pq}
  \tr \bigl( \cdots ,\f \la I \ra ,\cdots \bigr)_{I\in \under{k+n+1}{\cJ}{0}} .
  $$
  Let $\{ I_1,\ldots ,I_r \}$ (resp. $\{ J_1,\ldots ,J_r \}$)
  be the set of subsets of $\{ 0,1,\ldots ,k+n+1 \} -\{ p_1 ,q_1 \}$ (resp. $\{ 0,1,\ldots ,k+n+1 \} -\{ p_2 ,q_2 \}$)
  with cardinality $k$. 
  By Fact \ref{fact:intersection}, we have 
  \begin{align*}
    &C_{(p_1 q_1)(q_1 p_1)} =(\tpi)^k \cdot \diag 
    \left( \frac{1}{\prod_{i\in I_1}\a_i} , \ldots , \frac{1}{\prod_{i\in I_r}\a_j} \right), \\
    & C_{(q_2 p_2)(p_2 q_2)}=(\tpi)^k \cdot \diag 
    \left( \frac{1}{\prod_{j\in J_1}\a_j} , \ldots , \frac{1}{\prod_{j\in J_r}\a_j} \right) .
  \end{align*}
  $\C (\a)$-linearity of the intersection form $\cI$ leads
  \begin{align*}
    C_{(p_1 q_1)(p_2 q_2)} =A_{p_1 q_1} C  \tr A_{p_2 q_2}^{\vee} , 
  \end{align*}
  where $C:=C_{(0,k+n+1)(0,k+n+1)}$. 
  We thus have 
  \begin{align*}
    \left( C_{(p_1 q_1)(p_2 q_2)} \right)^{-1}
    &=(\tr A_{p_2 q_2}^{\vee})^{-1} C^{-1} A_{p_1 q_1}^{-1}
    =C_{(q_2 p_2)(p_2 q_2)}^{-1} A_{q_2 p_2} A_{p_1 q_1}^{-1} \\
    &=C_{(q_2 p_2)(p_2 q_2)}^{-1} A_{q_2 p_2} C \tr A_{q_1 p_1}^{\vee} C_{(p_1 q_1)(q_1 p_1)}^{-1} \\
    &=C_{(q_2 p_2)(p_2 q_2)}^{-1} C_{(q_2 p_2)(q_1 p_1)} C_{(p_1 q_1)(q_1 p_1)}^{-1}. 
  \end{align*}
  This equality holds in $M(r,r;\C (\a))$. 
  If none of $a_i \in \C (0\leq i \leq k+n+1)$ is zero, 
  then $C_{(p_1 q_1)(p_2 q_2)}(a_0,\ldots ,a_{k+n+1}) \in M(r,r;\C)$ is invertible, 
  since the right-hand side is well-defined.  
\end{proof}
\begin{remark}
  \label{rem:appendix}
  \begin{enumerate}[(1)]
  \item This proof gives an explicit expression of the inverse matrix of $C_{(p_1 q_1)(p_2 q_2)}$.  
    It is written as a product of the intersection matrix and diagonal ones. 
  \item The matrices $C(\a)$, $P_i(\a)$, and $Q_i(\a)$ in Theorem \ref{th:contiguity}
    coincide with $C_{(0,k+n+1)(0,k+n+1)}(\a)$, $C_{(0,i)(0,k+n+1)}(\a)$, and 
    $C_{(i,0)(0,k+n+1)}(\a)$, respectively. 
  \end{enumerate}
\end{remark}

\end{document}